 \newtheorem{thm}{Theorem}[section]
 \newtheorem{prop}[thm]{Proposition}
 \newtheorem{lem}[thm]{Lemma}
 \newtheorem{cor}[thm]{Corollary}
\theoremstyle{definition}
 \newtheorem{exm}[thm]{Example}
 \newtheorem{dfn}[thm]{Definition}
\theoremstyle{remark}
 \newtheorem{rem}[thm]{Remark}
 \numberwithin{equation}{section}
\theoremstyle{definition}
\theoremstyle{remark}
 \numberwithin{equation}{section}
\renewcommand{\le}{\leqslant}\renewcommand{\leq}{\leqslant}
\renewcommand{\ge}{\geqslant}\renewcommand{\geq}{\geqslant}
\theoremstyle{definition}
\newcommand{\bbC}{\mathbb{C}}
\newcommand{\bbQ}{\mathbb{Q}}
\newcommand{\bbR}{\mathbb{R}}
\newcommand{\bbZ}{\mathbb{Z}}   
\newcommand{\cN}{\mathcal{N}}
\renewcommand{\and}{\quad \mbox{and} \quad}  
\renewcommand{\le}{\leqslant}\renewcommand{\leq}{\leqslant}
\renewcommand{\ge}{\geqslant}\renewcommand{\geq}{\geqslant}
\title{An extension of Deligne-Henniart's twisting formula and its applications}
\subjclass[2010]{11S37; 20C15, 11R39)\\Keywords: Local fields, Galois representations, local root numbers,
Converse theorems}
\author[Biswas]{\bfseries Sazzad Ali Biswas}
\address{
Department of Mathematical Sciences\\ 
University of Copenhagen\\ 
Universitetsparken 5, Copenhagen 2100\\
Denmark}
\address{SRM University-AP\\ 
Department of Mathematics\\
Neerukonda-Kuragallu Village Mangalagiri Mandal\\
Andhra Pradesh - 522502\\
India}
\email{sazzadali.b@srmap.edu.in, sazzad.jumath@gmail.com}
\begin{document}

\vspace{10mm}
\setcounter{page}{1}
\thispagestyle{empty}

\begin{abstract}

Let $F/\bbQ_p$ be a non-Archimedean local field, and $G_F$ be the absolute Galois group of $F$. 
Let $\rho_1$ and $\rho_2$ be two 
finite-dimensional complex representations of $G_F$. Let $\psi$ be a nontrivial additive character of $F$.
Then, the question is:
\begin{center}
 {\it What is the twisting formula for the root number $W(\rho_1\otimes\rho_2,\psi)$?}
\end{center}
In general, the answer to this question is not yet known. However, if one of $\rho_i  \quad(i=1,2)$ is one-dimensional 
with ``sufficiently''
large conductor,
then in \cite{D1}, Deligne gave a twisting formula for $W(\rho_1\otimes\rho_2,\psi)$. Later, in \cite{DH}, 
Deligne and Henniart gave a general twisting formula for a {\it zero}-dimensional {\it virtual} representation 
twisted by a finite-dimensional representation of $G_F$. 
In this paper, we first extend Deligne's twisting formula for U-isotropic
Heisenberg 
representation of dimension prime $p$,
then we further extend 
Deligne-Henniart's result.


Finally, we provide two very important applications of our twisting formula: -- \\ 
{\it (i) invariant formula for the local root numbers for U-isotropic Heisenberg representations, and (ii) 
a converse theorem on the Galois side.}

\end{abstract}

\maketitle

\section{{\bf Introduction}}

Let $F$ be a non-Archimedean local field of characteristic zero, i.e., a finite extension of $\bbQ_p$, where $p$ is a prime.
Let $G_F$ be  the absolute 
Galois representation of $F$. Let $\psi$ be a nontrivial additive character of $F$.
In this paper, we fix $\psi$ as a
nontrivial character of $F$.
For a given multiplication character
$\chi:F^\times\to \bbC^\times$
of $F$, we have an explicit formula for the root number $W(\chi,\psi)$ (cf. \cite{JT2}). 
If $\chi_1$ and $\chi_2$ are two unramified characters of $F^\times$, we have
 \begin{equation}
  W(\chi_1\chi_2,\psi)=W(\chi_1,\psi)W(\chi_2,\psi).
 \end{equation}
Further,  let $\chi_1$ be ramified and $\chi_2$ be unramified, then (cf. \cite{JT2}, (3.2.6.3))
\begin{equation}
 W(\chi_1\chi_2,\psi)=\chi_2(\pi_F)^{a(\chi_1)+n(\psi)}\cdot W(\chi_1,\psi).
\end{equation}
Here $a(\chi_1)$ (resp. $n(\psi)$) is the conductor of $\chi_1$ (resp. $\psi$), and $\pi_F$ is an uniformizer of $F$. 
We also have a twisting formula for the local root numbers by Deligne (cf. \cite{D1}, Lemma 4.16)
under some special conditions, which is as follows:\\
{\it Let $\chi_1$ and $\chi_2$ be two multiplicative characters of a local field $F$ such 
that $a(\chi_1)\geq 2\cdot a(\chi_2)$.
Let $y_{\chi_1,\psi}$ be an element of $F^\times$ such that 
$$\chi_1(1+x)=\psi(y_{\chi_1,\psi}x)$$
for all $x\in F$ with valuation $\nu_F(x)\geq\frac{a(\chi_1)}{2}$ (if $a(\chi_1)=0$, $y_{\chi_1,\psi}=\pi_{F}^{-n(\psi)}$). 
Then, 
\begin{equation}\label{eqn 2.3.17}
 W(\chi_1\chi_2,\psi)=\chi_2^{-1}(y_{\chi_1,\psi})\cdot W(\chi_1,\psi).
\end{equation}
}
For characters {\it without} any restriction, we also have a general twisting formula for characters 
(cf. Theorem 3.5 on p. 592 of \cite{SABTF}).

Moreover, if a finite-dimensional Galois representation $\rho$ is twisted by an unramified character
$\omega_s(x):=q_F^{-s\nu_F(x)}$,
we have the following twisting formula (cf. \cite{JT2} (3.4.5)):\\
\begin{equation}
 W(\rho\omega_{s},\psi)=W(\rho,\psi)\cdot \omega_{s}(c_{\rho,\psi})
\end{equation}
for any $c=c_{\rho,\psi}$ such that $\nu_F(c)=a(\rho)+n(\psi)\mathrm{dim}(\rho)$, 
where $a(\rho)$ is the Artin conductor (cf. Definition 2.1)  of 
the representation $\rho$.

Let $\rho_1$ and $\rho_2$ be two arbitrary finite-dimensional representations of $G_F$. Now the question is:
\begin{center}
{\it Is there any explicit formula for $W(\rho_1\otimes\rho_2,\psi)$?}
\end{center}
The answer to this question is {\it not} yet known. However, under some {\bf special conditions} -- 
when any of $\rho_i(i=1,2)$ 
is {\it one-dimensional} with {\it sufficiently large conductor} --
then Deligne gives an explicit formula for $W(\rho_1\otimes\rho_2,\psi)$
(cf. \cite{D1}, Subsection 4.1):\\
{\it Let $\rho_1=\rho$ be a finite-dimensional representation of $G_F$, and let $\rho_2=\chi$ be any nontrivial character
of $F^\times$. For each $\chi$ there exists an element $c\in F^\times$ such that
$$\chi(1+y)=\psi(cy)\quad\text{for sufficiently small $y$}.$$
For all $\chi$ with {\bf sufficiently large} conductors, we have the following formula:
\begin{equation}\label{eqn 1.5}
 W(\rho\otimes\chi,\psi)=W(\chi,\psi)^{\dim(\rho)}\cdot \det(\rho)(c^{-1}).
\end{equation}
}

{\bf However, for arbitrary characters (especially characters with smaller conductors), 
the equation (\ref{eqn 1.5}) is not true.} 
If $\rho$ is a minimal U-isotropic Heisenberg representation (for U-isotropic Heisenberg 
representations, see definition \ref{Definition U-isotropic}, 
and for minimal conductor Heisenberg representation, see Remark 4.4), then in this paper, we extend the above 
result (\ref{eqn 1.5}) of Deligne (cf. Theorem \ref{Theorem using Deligne-Henniart} below). 
 In Lemma \ref{Lemma U-equivalent}, we show that the U-isotropic Heisenberg representations are induced from 
some (tame) character of an unramified extension.

Moreover, by the construction (cf. \cite{D1}, \cite{RL}) of local root numbers, we can attach a local root number for 
any virtual representation of $G_F$.
Therefore, if we define a {\bf zero-dimensional} virtual representation from the representation $\rho$ as follows: 
$$\rho_0:=\rho-\dim(\rho)\cdot 1_{G_F},$$
where $1_{G_F}$ is the trivial representation
of $G_F$, then from Equation (\ref{eqn 1.5}), we have 
\begin{equation}\label{eqn 1.6}
    W(\rho_0\otimes\chi,\psi)=\det(\rho_0)(c^{-1}).
\end{equation}
Further, in \cite{DH}, Deligne and Henniart generalize the above result (\ref{eqn 1.6}) (see Section 4 of \cite{DH}),
in which $\chi$ is 
replaced by an arbitrary finite-dimensional representation $\rho$ of $G_F$, and the condition on the conductor of $\chi$ 
becomes a condition on the Artin conductor of $\rho$.

\begin{thm}[{\bf Deligne-Henniart's Twisting formula}, Theorem 4.6, \cite{DH}]\label{Deligne-Henniart's general formula}
 Let $\rho$ be a virtual representation of $G_F$ (without moderate component).
 There exists an element
 $\gamma\in F^\times$ uniquely determined modulo $U_F^{j(\rho)/2-1}$, such that for any virtual representation $\rho_0$ 
 of $G_F$ of dimension zero which satisfies $\beta(\rho_0)<j(\rho)/2$, we have 
 \begin{equation}
  W(\rho_0\otimes\rho,\psi)=\det(\rho_0)(\gamma).
 \end{equation}
Here $\nu_F(\gamma)=a(\rho)+\dim(\rho)\cdot n(\psi)$, and $j(\rho)$ is the jump of $\rho$. $\beta(\rho_0)$ is the 
maximum jump among all components of $\rho_0$.
\end{thm}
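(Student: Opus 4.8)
The plan is to prove the formula one admissible $\rho_0$ at a time, reducing everything to Deligne's character twisting formula \eqref{eqn 2.3.17} by Brauer induction and inductivity of local constants, and then to verify that the auxiliary elements produced over the various intermediate fields glue to a single $\gamma\in F^\times$. Concretely, observe first that both $\rho_0\mapsto W(\rho_0\otimes\rho,\psi)$ and $\rho_0\mapsto\det(\rho_0)(\gamma)$ are homomorphisms from the group of virtual representations of $G_F$ of dimension zero with $\beta(\rho_0)<j(\rho)/2$ into $\bbC^\times$ (the first by additivity of $\varepsilon$-factors in the Grothendieck group). By Brauer's theorem, in the refined form that keeps control of conductors, this group is generated by the classes $\mathrm{Ind}_{E/F}(\chi_E-1_E)$, with $E/F$ running over finite subextensions of $\bar F/F$ and $\chi_E$ over characters of $E^\times$ whose conductor is small enough to respect $\beta(\mathrm{Ind}_{E/F}(\chi_E-1_E))<j(\rho)/2$. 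So it suffices to treat $\rho_0=\mathrm{Ind}_{E/F}(\chi_E-1_E)$ and to produce $\gamma\in F^\times$ independent of $(E,\chi_E)$.

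Next I would apply inductivity in degree zero. Since $\chi_E-1_E$ has dimension zero, $\mathrm{Res}_E\rho\otimes\chi_E$ and $\mathrm{Res}_E\rho$ have equal dimension, so the Langlands $\lambda$-factors cancel in the inductivity relation and, writing $\psi_E:=\psi\circ\mathrm{Tr}_{E/F}$,
\[
W(\rho_0\otimes\rho,\psi)=\frac{W(\mathrm{Res}_E\rho\otimes\chi_E,\ \psi_E)}{W(\mathrm{Res}_E\rho,\ \psi_E)}.
\]
On the determinant side, $\det\bigl(\mathrm{Ind}_{E/F}(\chi_E-1_E)\bigr)=\chi_E|_{F^\times}$ (the discriminant characters of $\mathrm{Ind}_{E/F}\chi_E$ and $\mathrm{Ind}_{E/F}1_E$ cancel), where $\chi_E|_{F^\times}$ means $\chi_E$ composed with the transfer, i.e.\ with $F^\times\hookrightarrow E^\times$ under local class field theory. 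Thus the theorem is equivalent to the assertion that the displayed ratio equals $\chi_E(\gamma)$, i.e.\ equals $\chi_E^{-1}$ of a fixed element of $F^\times$ independent of $(E,\chi_E)$.

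Now set $\sigma:=\mathrm{Res}_E\rho$: a virtual representation of $G_E$ with no moderate component and all ramification breaks large relative to $a(\chi_E)$ (controlled below by $j(\rho)$ and the ramification of $E/F$). Brauer-decompose $\sigma=\sum_j m_j\,\mathrm{Ind}_{L_j/E}\theta_j$; then $\sigma\otimes\chi_E=\sum_j m_j\,\mathrm{Ind}_{L_j/E}\bigl(\theta_j\cdot(\chi_E\circ N_{L_j/E})\bigr)$, the $\lambda$-factors cancel again, and one is left with a product over $j$ of ratios $W\bigl(\theta_j(\chi_E\circ N_{L_j/E}),\psi_{L_j}\bigr)/W(\theta_j,\psi_{L_j})$. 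For the pieces with $a(\theta_j)\ge 2\,a(\chi_E\circ N_{L_j/E})$, Deligne's formula \eqref{eqn 2.3.17} evaluates this ratio as $(\chi_E\circ N_{L_j/E})^{-1}(y_{\theta_j,\psi_{L_j}})=\chi_E^{-1}\bigl(N_{L_j/E}(y_{\theta_j,\psi_{L_j}})\bigr)$. Assembling, $W(\sigma\otimes\chi_E,\psi_E)/W(\sigma,\psi_E)=\chi_E^{-1}(\gamma_\sigma)$ with $\gamma_\sigma:=\prod_j N_{L_j/E}(y_{\theta_j,\psi_{L_j}})^{m_j}\in E^\times$, well defined modulo a suitable higher unit group.

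The remaining — and, I expect, hardest — step is the compatibility: one must show that the elements $\gamma_{\mathrm{Res}_E\rho}\in E^\times$, for $E$ ranging over the relevant subextensions, are all the image of one fixed $\gamma^{-1}\in F^\times$ under $F^\times\hookrightarrow E^\times$, modulo the appropriate higher unit group; granting this, the two sides match by Step 2, one reads off $\nu_F(\gamma)=a(\rho)+\dim(\rho)\,n(\psi)$ from the known valuation of $\varepsilon$-factors, and the uniqueness of $\gamma$ modulo $U_F^{j(\rho)/2-1}$ follows because no admissible $\rho_0$ (one with $\beta(\rho_0)<j(\rho)/2$) separates elements of $F^\times$ to finer precision. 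Intrinsically, $\gamma$ should record the ``stationary point'' of $x\mapsto\det\sigma(1+x)$ against $\psi_E$ near the break, and one has to check that restricting $\rho$ to $E$ replaces this point by its transfer/norm — transparent in the monomial case from the computation above, but for a $\rho$ with a primitive (non-monomial) constituent it demands either an intrinsic description of $\gamma$ via $\det\rho$ and the break filtration, or a descent argument showing that the small-conductor pieces thrown up by Brauer induction contribute trivially to the ratio. Making this uniform — in particular showing $\gamma_\sigma$ is independent of the chosen Brauer decomposition beyond the claimed precision — is where the substance of the proof lies.
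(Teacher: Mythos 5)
First, a point of reference: the paper does not prove this statement at all --- it is quoted verbatim as Theorem 4.6 of \cite{DH} and used as a black box, so there is no ``paper's own proof'' to compare yours against; the comparison can only be with Deligne--Henniart's original argument, whose general strategy (reduction to monomial dimension-zero virtual representations, inductivity of $\varepsilon$-factors in degree zero, and Deligne's Lemma 4.16, i.e.\ formula (\ref{eqn 2.3.17}), as the local input) your sketch does correctly reproduce. Your formal reductions are sound: both sides are homomorphisms in $\rho_0$; a dimension-zero $\rho_0$ with $\beta(\rho_0)<j(\rho)/2$ is inflated from a finite quotient $\mathrm{Gal}(L/F)$ with $L\subset \bar F^{\,G_F^{\beta}}$, so Brauer induction applied to that finite group does yield generators $\mathrm{Ind}_{E/F}(\chi_E-1_E)$ still satisfying the jump bound; the $\lambda$-factors cancel in degree zero; and $\det\bigl(\mathrm{Ind}_{E/F}(\chi_E-1_E)\bigr)$ is $\chi_E$ composed with the transfer, i.e.\ $\chi_E$ evaluated on $F^\times\subset E^\times$ under class field theory. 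Up to this point you have correctly reduced the theorem to: the ratio $W(\mathrm{Res}_E\rho\otimes\chi_E,\psi_E)/W(\mathrm{Res}_E\rho,\psi_E)$ equals $\chi_E(\gamma)$ for a single $\gamma\in F^\times$ independent of $(E,\chi_E)$.

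The genuine gap is that this last statement is the theorem, and your proposal does not prove it --- as you yourself say. Two things are missing. (a) When you Brauer-decompose $\mathrm{Res}_E\rho=\sum_j m_j\,\mathrm{Ind}_{L_j/E}\theta_j$, nothing guarantees that every $\theta_j$ satisfies $a(\theta_j)\ge 2\,a(\chi_E\circ N_{L_j/E})$: Brauer induction of a highly ramified representation unavoidably produces characters $\theta_j$ of small conductor (already $\dim(\rho)\cdot 1$ appears if you normalize to degree zero), and for those pieces (\ref{eqn 2.3.17}) simply does not apply; your closing sentence about ``small-conductor pieces contributing trivially'' is an assertion, not an argument, and it is false piece by piece --- only the total ratio behaves well, which is what has to be shown. (b) Even granting (a), you obtain for each $E$ an element $\gamma_{\mathrm{Res}_E\rho}\in E^\times$ depending on the chosen decomposition, and the independence of that element from the decomposition, its descent to a single $\gamma\in F^\times$, and the compatibility under change of $E$ are precisely the ``stationary phase'' analysis that occupies Deligne--Henniart; it cannot be waved through, because the left-hand side $W(\rho_0\otimes\rho,\psi)$ is only known a priori to depend on $\rho_0$ through its class, not to factor through $\det(\rho_0)$. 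So the proposal is an honest and correctly oriented reduction, but the step it defers is the entire content of the theorem, and the proposal cannot stand as a proof. Since the paper itself only cites the result, the appropriate course here is likewise to cite \cite{DH} rather than to include this sketch as a proof.
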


In this article, {\it we first generalize Deligne's twisting formula (\ref{eqn 1.5}) for a {\bf minimal} conductor
U-isotropic
Heisenberg 
representation $\rho_0$ of $G_F$ of dimension prime to $p$, and twisted by an {\bf arbitrary character $\chi_F$ with conductor
$a(\chi_F)\ge 2$}}. In the following twisting formula, the conductor of $\chi_F$ needs {\bf not be sufficiently large}, but 
Deligne's twisting formula (\ref{eqn 1.5}) is only true for those characters with {\it sufficiently large conductors}. 

 \begin{thm}\label{Theorem using Deligne-Henniart}
  Let $\rho_m=\rho(X_\eta,\chi_K)=\rho_0\otimes\widetilde{\chi_F}$ be a U-isotropic Heisenberg representation of the absolute Galois group
  $G_F$ of a non-archimedean local field $F/\bbQ_p$ of dimension $m$ with $gcd(m,p)=1$, where 
  $\rho_0=\rho_0(X_\eta,\chi_0)$ is a minimal conductor U-isotropic Heisenberg representation of $G_F$
  and $\widetilde{\chi_F}:G_F\to\bbC^\times$
  corresponds to $\chi_F:F^\times\to \bbC^\times$ by class field theory. 
 If $a(\chi_F)\ge 2$, then we have 
 \begin{equation}\label{eqn 5.4.9}
  W(\rho_m,\psi)=W(\rho_0\otimes\widetilde{\chi_F},\psi)=W(\chi_F,\psi)^m\cdot\det(\rho_0)(c),
 \end{equation}
where $c:=c(\chi_F,\psi)\in F^\times$ satisfies 
\begin{center}
 $\chi_F(1+x)=\psi(c^{-1}x)$ for all $x\in P_{F}^{a(\chi_F)-[\frac{a(\chi_F)}{2}]}$.
\end{center}
\end{thm}

{\bf Note:} When $a_F(\rho_0)=m a_0$ and if we assume that $a_0\ne 0$ and $a_F(\chi_F)\ge 2 a_0$, 
then in Proposition 1.2 of \cite{BZ}, we have an extension of the Deligne's twisting formula. However, in the above 
theorem, we only need the condition $a_F(\chi_F)\ge 2$. The arguments of the proof of Theorem 1.2 (cf. p. 19) are different
from 
the proof of Proposition 1.2 in \cite{BZ}.\\

Then, using Theorem \ref{Theorem using Deligne-Henniart}, we extend Deligne-Henniart's theorem
\ref{Deligne-Henniart's general formula}. In Theorem 1.1, the dimension of the virtual representation $\rho_0$ is zero.
However, in the following extension theorem, we have a
{\bf twisting formula for the representation $\sigma\otimes\rho_{m}$, where $\sigma$ is an arbitrary finite-dimensional
complex representation of $G_F$ and $\rho_m$ is a U-isotropic Heisenberg representation
of dimension $m$ prime to $p$.}

\begin{thm}\label{General twisting formula by minimal Heisenberg representation}
 Let $\sigma$ be an arbitrary finite dimension complex representation of $G_F$.
  If $a(\chi_F)\ge 2$ and $j(\rho_m)> 2\cdot \beta(\sigma)$, then we have 
  \begin{equation}
W(\sigma\otimes\rho_{m},\psi)= \det(\sigma)(\gamma)\cdot W(\chi_F,\psi)^{dim(\sigma\otimes\rho_{m})}
\cdot\det(\rho_{0})(c^{\dim(\sigma)}).   
  \end{equation}
Here  $\rho_m$ and $c\in F^\times$ are the same as in Theorem \ref{Theorem using Deligne-Henniart}, and 
$\nu_F(\gamma)=a(\rho_{m})+m\cdot n(\psi)$. 
\end{thm}

In Section 6, we also provide two very important applications of Theorem 
\ref{General twisting formula by minimal Heisenberg representation}:\\
{\it (i) Invariant formula of the root number for U-isotropic 
Heisenberg representations; and\\
(ii) a local converse theorem on the Galois side.}

From the dimension theorem (cf. Theorem \ref{Dimension Theorem}), we know that the dimension of a Heisenberg representation
$\rho$ of $G_F$ is of the form $dim(\rho)=p^r\cdot m$, where $r\ge 0$ and $gcd(m,p)=1$, $m|(q_F-1)$. 
And a U-isotropic Heisenberg representation $\rho$ can be expressed as $\rho=\rho_p\otimes\rho_m$, where
$\rho_p$ and $\rho_m$
are U-isotropic Heisenberg representations of $G_F$ of dimensions $p^r$ and $m$, respectively.
Therefore, to give an invariant
formula for the root number $W(\rho,\psi)$, we can use Theorem \ref{General twisting formula by minimal Heisenberg representation}, and 
we obtain the following result.

\begin{thm}[{\bf Invariant Formula}]\label{Invariant formula}
 Let $\rho$ be a U-isotropic Heisenberg representation of $G_F$ of the form $\rho=\rho_p\otimes\rho_{m}$ with 
 $\dim(\rho_p)=p^r (r\ge 1)$, and $\dim(\rho_{m})=m$, and $gcd(m,p)=1$. If jump: $j(\rho_m)>2\cdot j(\rho_p)$, we have 
 $$W(\rho,\psi)=W(\rho_p\otimes\rho_m,\psi)=\det(\rho_p)(\gamma)\cdot W(\chi_F,\psi)^{\dim(\rho)}\det(\rho_{0})(c^{p^r}).$$
 Here $\chi_F$, $c$, $\rho_0$ are same as in Theorem \ref{Theorem using Deligne-Henniart},
 and $\nu_F(\gamma)=a(\rho_{m})+m\cdot n(\psi)$.
\end{thm}

Using Theorem \ref{General twisting formula by minimal Heisenberg representation},
we obtain the following converse theorem on the 
Galois side.
\begin{thm}[{\bf Converse Theorem on the Galois side}]\label{Local Converse theorem}
 Let $\rho_{m}=\rho_0\otimes\widetilde{\chi_F}$ be a U-isotopic Heisenberg representation of
 $G_F$ of dimension prime to $p$.
 Let $\rho_1$, $\rho_2$ be two finite-dimensional complex representations of $G_F$ with
 $$\det(\rho_1)\equiv\det(\rho_2),\quad\text{and $j(\rho_m)>2\cdot \textrm{max}\{\beta(\rho_1),\beta(\rho_2)\}$}.$$
If 
 $$W(\rho_1\otimes\rho_{m},\psi)=W(\rho_2\otimes\rho_{m},\psi),$$
then $\rho_1\equiv\rho_2$ or $\rho_1\equiv \rho_2\otimes\mu$, where $\mu:F^\times\to\bbC^\times$ is an unramified character 
whose order divides $\dim(\rho_i), i=1,2$.
\end{thm}

In the Appendix, we provide an explicit description of the Heisenberg representations of dimension prime to $p$.

\section{{\bf Preliminaries and Notation}}

Let $F$  be a non-archimedean local field of characteristic zero, i.e., a finite extension
of the field $\mathbb{Q}_p$ (field of $p$-adic numbers),
where $p$ is a prime.
Let $K/F$ be a finite extension of the field $F$. Let $e_{K/F}$ be the ramification index for extension
$K/F$, and $f_{K/F}$ be 
the residue degree of extension $K/F$.


Let $O_F$ be the 
ring of integers in $F$, $P_F=\pi_F O_F$ the unique prime ideal in $O_F$, 
and $\pi_F$ a uniformizer, i.e., an element in $P_F$ whose valuation is one, i.e.,
 $\nu_F(\pi_F)=1$.
Let $U_F=O_F-P_F$ be the group of units in $O_F$.
Let $P_{F}^{i}=\{x\in F:\nu_F(x)\geq i\}$ and for $i\geq 0$ define $U_{F}^i=1+P_{F}^{i}$
(with proviso $U_{F}^{0}=U_F=O_{F}^{\times}$).
We also let $a(\chi)$ be the conductor of 
 nontrivial character $\chi: F^\times\to \mathbb{C}^\times$, i.e., $a(\chi)$ is the smallest integer $\geq 0$ such 
 that $\chi$ is trivial
 on $U_{F}^{a(\chi)}$. We say $\chi$ is unramified if the conductor of $\chi$ is {\bf zero} and otherwise ramified.
The {\bf conductor} of any nontrivial additive character $\psi$ of the field $F$ is an integer $n(\psi)$ if $\psi$ is trivial
on $P_{F}^{-n(\psi)}$, but nontrivial on $P_{F}^{-n(\psi)-1}$. 

Let $G_F:=Gal(\overline{F}/F)$ (resp. $W_F$) be the absolute Galois (resp. Weil group) of the field $F$, where 
$\overline{F}$ is an absolute algebraic closure of $F$.

\subsection{Ramification break}

Let $K/F$ be a Galois extension of $F$, and $G$ be the Galois group of the extension $K/F$.
For each $i\ge -1$, we define the 
$i$-th ramification subgroup of $G$ (in the lower numbering) as follows:
\begin{center}
 $G_i=\{\sigma\in G|\quad v_K(\sigma(\alpha)-\alpha)\geq i+1\quad \text{for all $\alpha\in O_K$}\}$.
 \end{center}
 An integer $t$ is called a {\bf ramification break or jump} for the extension $K/F$ or the ramification groups 
 $\{ G_i\}_{i\ge -1}$ if 
 $$G_t\ne G_{t+1}.$$
We also know that there is a decreasing filtration (with upper numbering) of $G$, and which is defined by the 
{\bf Hasse-Herbrand} function $\Psi=\Psi_{K/F}$ as follows:
$$G^u=G_{\Psi(u)}, \quad\text{where $u\in\bbR,\, u\ge -1$}.$$
By the definition of the Hasse-Herbrand function, $\Psi(-1)=-1, \Psi(0)=0$, we have
$G^{-1}=G_{-1}=G$, and $G^0=G_0$. Thus, a real number $t\ge -1$ is called a ramification break for $K/F$ or the filtration
$\{G^i\}_{i\ge -1}$ if 
$$G^t\ne G^{t+\varepsilon},\quad\text{for all $\varepsilon> 0$}.$$
When $G$ is {\bf abelian}, it can be proved (cf. {\bf Hasse-Arf theorem}, \cite{FV}, p. 91) that the ramification
breaks for $G$ are {\bf integers.} However, in general, the set of ramification breaks of a Galois group of a local field is 
{\it countably infinite, and need not consist of integers}.


\begin{dfn}[{\bf Artin and Swan conductors}]
 Let $G$ be a finite group and $R(G)$ be the complex representation ring of $G$. For any two representations 
 $\rho_1,\rho_2\in R(G)$ with characters $\chi_1,\chi_2$ respectively, we have Schur's inner product:
 $$<\rho_1,\rho_2>_G=<\chi_1,\chi_2>_G:=\frac{1}{|G|}\sum_{g\in G}\chi_1(g)\cdot\overline{\chi_2(g)}.$$
 Let $K/F$ be a finite Galois extension with Galois
 group $G:=\rm{Gal}(K/F)$. For an element $g\in G$ different from identity $1$, we define the non-negative integer 
 (cf. \cite{JPS}, Chapter IV, p. 62)
 $$i_G(g):=\rm{inf}\{\nu_K(x-g(x))|\; x\in O_K\}.$$
 Using this non-negative (when $g\ne 1$) integer $i_G(g)$, we define a function $a_G:G\to\bbZ$ as follows:
 \begin{center}
  $a_G(g)=-f_{K/F}\cdot i_G(g)$ when $g\ne 1$, and $a_G(1)=f_{K/F}\sum_{g\ne 1}i_G(g)$.
 \end{center}
Thus, from this definition, we can see that $\sum_{g\in G}a_G(g)=0$, hence $<a_G, 1_G>=0$. 
It can be proved (cf. \cite{JPS}, p. 99, Theorem 1) that the function $a_G$ is the character of a linear representation of $G$,
and that the corresponding linear representation is called the {\bf Artin representation} $A_G$ of $G$.

Similarly, for a nontrivial $g\ne 1\in G$, we define (cf. \cite{VS}, p. 247)
$$s_G(g)=\rm{inf}\{\nu_K(1-g(x)x^{-1})|\;x\in K^\times\},\qquad s_G(1)=-\sum_{g\ne 1}s_G(g).$$
We can define a function $\rm{sw}_G:G\to\bbZ$ as follows:
$$\rm{sw}_G(g)=-f_{K/F}\cdot s_G(g).$$
It can also be shown that $\rm{sw}_G$ is a character of a linear representation of $G$, and that the 
corresponding representation
is called the {\bf Swan representation} $SW_G$ of $G$.

From \cite{JP}, p. 160, we have the following relation between the Artin and Swan representations (cf. \cite{VS}, p. 248, equation (6.1.9))
\begin{equation}\label{eqn 5.1.22}
 SW_G=A_G+\rm{Ind}_{G_0}^{G}(1)-\rm{Ind}_{\{1\}}^{G}(1),
\end{equation}
$G_0$ is the $0$-th ramification group (i.e., inertia group) of $G$.

Now, we are in a position to define the Artin and Swan conductors of a representation $\rho\in R(G)$. The Artin conductor of a 
representation $\rho\in R(G)$ is defined as follows:
$$a_F(\rho):=<A_G,\rho>_G=<a_G,\chi>_G,$$
where $\chi$ is the character of 
the representation $\rho$. Similarly, for the representation $\rho$, the Swan conductor is defined as follows
$$\rm{sw}_F(\rho):=<SW_G,\rho>_G=<\rm{sw}_G,\chi>_G.$$
For more details about the Artin and Swan conductors, see Chapter 6 of \cite{VS} and Chapter VI of \cite{JPS}.
\end{dfn}
From Equation (\ref{eqn 5.1.22}), we obtain
\begin{equation}\label{eqn 5.1.23}
 a_F(\rho)=\rm{sw}_F(\rho)+\rm{dim}(\rho)-<1,\rho>_{G_0}.
\end{equation}
Moreover, from the Corollary of Proposition 4 on p. 101 of \cite{JPS}, for an induced representation 
$\rho:=\rm{Ind}_{\rm{Gal}(K/E)}^{\rm{Gal}(K/F)}(\rho_E)=\rm{Ind}_{E/F}(\rho_E)$, we have
\begin{equation}\label{eqn 5.1.24}
 a_F(\rho)=f_{E/F}\cdot \left( d_{E/F}\cdot \rm{dim}(\rho_E)+\textrm{a}_E(\rho_E)\right).
\end{equation}
We apply formula (\ref{eqn 5.1.24}) for $\rho_E=\chi_E$ of dimension $1$, and then conversely 
\begin{equation}
    a(\chi_E)=\frac{a_F(\rho)}{f_{E/F}}-d_{E/F},
\end{equation}
where $d_{E/F}$ is the exponent of the different of the extension $E/F$.
Therefore, if we know $a_F(\rho)$, we can compute the conductor $a(\chi_E)$ of $\chi_E$. 


\begin{dfn}[{\bf Jump for a representation}]
Let $\rho$ be an irreducible representation of $G$. For this irreducible $\rho$, we define jump for $\rho$ as follows:
$$j(\rho):=\rm{max}\{ i\;|\; \rho|_{G^i}\not\equiv 1\}.$$
Now, if $\rho$ is a ramified irreducible representation of $G$, 
then $\rho|_{I}\not\equiv 1$, where $I=G^0=G_0$ is the inertia subgroup
of $G$. Thus, from the definition of $j(\rho)$, we can say, if $\rho$ is irreducible, then we always have 
$j(\rho)\ge 0$, i.e., $\rho$ is nontrivial on the inertia group $G_0$. From the definitions of Swan and Artin 
conductors, and equation (\ref{eqn 5.1.23}), when $\rho$ is {\bf irreducible}, we have the following relations:
\begin{equation}\label{eqn 5.1.281}
 \rm{sw}_F(\rho)=\rm{dim}(\rho)\cdot j(\rho),\qquad a_F(\rho)=\rm{dim}(\rho)\cdot (j(\rho)+1).
\end{equation}
From the Theorem of Hasse-Arf (cf. \cite{JPS}, p. 76), if $\rm{dim}(\rho)=1$, i.e., $\rho$ is a character of $G/[G,G]$, 
we can say that $j(\rho)$ must be an integer, then $\rm{sw}_F(\rho)=j(\rho), a_F(\rho)=j(\rho)+1$.
Moreover, by class field theory, $\rho$ corresponds to a linear character $\chi_F$, hence for linear character $\chi_F$, we can write 
$$j(\chi_F):=\rm{max}\{i\;|\;\chi_F|_{U_F^i}\not\equiv1\},$$
because in class field theory (under Artin isomorphism), 
the upper numbering in the filtration of $\rm{Gal}(F_{ab}/F)$ is compatible with the filtration (descending chain) of the group of units 
$U_F$.


Similarly, jump can be defined for any {\bf virtual} representations of $G_F$.
Let $\rho$ be a virtual representation of $G_F$. We denote $j(\rho)$ (resp. $\beta(\rho)$) as the lower (resp. upper)
bound of $j(\rho_i)$, when $\rho_i$ runs over all components of $\rho.$ That is, if 
$$\rho=\sum_i^n n_i\rho_i,$$
then 
$$j(\rho)\le \{ j(\rho_1),\cdots,j(\rho_n)\}\le \beta(\rho).$$
Further, $j(\rho)\ge \alpha$ means that the components of $\rho$ do not have any non-null vector fixed by $G_F^\alpha$:
$\rho^{G_F^\alpha}=0$. And $\beta(\rho)<\beta$ (we then have $\beta>0$) means that $\rho$ comes by inflation from a 
virtual representation of $G_F/{G_F^\beta}$.

\end{dfn}

\subsection{Heisenberg Representations}

Let $G$ be a profinite group (in our case, $G=G_F$).
An irreducible representation $\rho$ of $G$ is called a \textbf{Heisenberg 
representation} if it represents commutators by 
scalar matrices. Therefore, higher commutators are represented by $1$ (see \cite{Z3}).
We can see that the linear characters of $G$ are Heisenberg representations as the degenerate special case.
To classify Heisenberg representations, we mention two invariants of an irreducible representation 
$\rho\in\rm{Irr}(G)$:
\begin{enumerate}
 \item Let $Z_\rho$ be the \textbf{scalar} group of $\rho$, i.e., $Z_\rho\subseteq G$ and $\rho(z)=\text{scalar matrix}$
for every $z\in Z_\rho$. If $V/\bbC$ is a representation space of $\rho$, we get $Z_\rho$ as the kernel of the composite map 
\begin{equation}\label{eqn 2.6.1}
 G\xrightarrow{\rho}GL_{\bbC}(V)\xrightarrow{\pi} PGL_{\bbC}(V)=GL_{\bbC}(V)/\bbC^\times E,
\end{equation}
where $E$ is the unit matrix, and denote $\overline{\rho}:=\pi\circ\rho$.
Therefore, $Z_\rho$ is a normal subgroup of $G$.
\item Let $\chi_\rho$ be the character of $Z_\rho$. This is given as $\rho(g)=\chi_\rho(g)\cdot E$ for all $g\in Z_\rho$. 
It can be seen that $\chi_\rho$ is a $G$-invariant character of $Z_\rho$, and this character, we call the central 
character of $\rho$.
\end{enumerate}
Let $A$ be a profinite abelian group. Then, we know that (cf. \cite{Z5}, p. 124, Theorem 1 and Theorem 2)
the set of isomorphism classes $\rm{PI}(A)$ of projective irreducible representations (for 
projective representation, see \cite{CR}, \S  51) of $A$ is in bijective correspondence with the 
set of continuous alternating characters $\rm{Alt}(A)$. If $\rho\in\rm{PI}(A)$ corresponds to $X\in\rm{Alt}(A)$, then 
\begin{center}
 $\rm{Ker}(\rho)=\rm{Rad}(X)$ \hspace{.4cm} and \hspace{.2cm}$[A:\rm{Rad}(X)]=\rm{dim}(\rho)^2$,
\end{center}
where $\rm{Rad}(X):=\{a\in A|\, X(a,b)=1,\,\text{for all}\, b\in A\}$, the {\bf radical of $X$}.

Let $A:=G/[G,G]$, so $A$ is abelian. 
We also know from the  composite map (\ref{eqn 2.6.1}),
$\overline{\rho}$ is a projective irreducible representation of $G$ and $Z_\rho$ is the kernel of $\overline{\rho}$.
Therefore, \textbf{modulo commutator group $[G,G]$}, we can consider that 
$\overline{\rho}$ is in $\rm{PI}(A)$, which corresponds to an alternating 
character $X$ of $A$ with kernel of $\overline{\rho}$ is $Z_\rho/[G,G]=\rm{Rad}(X)$.
We also know that 
$$[A:\rm{Rad}(X)]=[G/[G,G]:Z_\rho/[G,G]]=[G:Z_\rho].$$
Then, we observe that 
$$\rm{dim}(\overline{\rho})=\rm{dim}(\rho)=\sqrt{[G:Z_\rho]}.$$

Let $H$ be a subgroup of $A$. Then, we define the orthogonal complement of $H$ in $A$ with respect to $X$ as follows:
$$H^\perp:=\{a\in A:\quad X(a, H)\equiv1\}.$$
An {\bf isotropic} subgroup $H\subset A$ is a subgroup such that $H\subseteq H^\perp$ (cf. \cite{EWZ}, p. 270, Lemma 1(v)).
When an isotropic subgroup $H$ is maximal,
we call $H$ is a \textbf{maximal isotropic} for $X$. Thus, when $H$ is maximal isotropic, we have 
$H=H^\perp$.



Let
$C^1G=G$, $C^{i+1}G=[C^iG,G]$ denote the 
descending central series of $G$. Now assume that every projective
representation of $A$ lifts to an ordinary representation 
of $G$. Then, from I. Schur's results (cf. \cite{CR}, p. 361, Theorem 53.7), we have (cf. \cite{Z5}, p. 124, Theorem 2):
\begin{enumerate}
 \item Let $A\wedge_\bbZ A$ denote the alternating square of $\bbZ$-module $A$. The commutator map 
 \begin{equation}\label{eqn 2.6.3}
  A\wedge_\bbZ A\cong C^2G/C^3G, \hspace{.3cm} a\wedge b\mapsto [\hat{a},\hat{b}]
 \end{equation}
is an isomorphism.
\item The map $\rho\to X_\rho\in\rm{Alt}(A)$ from the Heisenberg representations to the alternating characters on $A$ is 
surjective. 
\end{enumerate}

\section{\textbf{U-isotropic Heisenberg representations}}

\subsection{{\bf Arithmetic Description of Heisenberg Representations}}

Let $F/\bbQ_p$ be a local field, and $\overline{F}$ be an algebraic closure of $F$.
Denote $G_F=\rm{Gal}(\overline{F}/F)$ as the 
absolute Galois group for $\overline{F}/F$. We know that (cf. \cite{HK2}, p. 197) each representation $\rho:G_F\to GL(n,\bbC)$ corresponds 
to a projective 
representation $\overline{\rho}:G_F\to GL(n,\bbC)\to PGL(n,\bbC)$. On the other hand, each projective representation 
$\overline{\rho}:G_F\to PGL(n,\bbC)$ can be lifted to a representation $\rho:G_F\to GL(n,\bbC)$.
Let $A_F=G_{F}^{ab}$ be the factor commutator group of $G_F$. Define 
\begin{center}
 $FF^\times:=\varprojlim(F^\times/N\wedge F^\times/N)$
\end{center}
where $N$ runs over all open subgroups of the finite index in $F^\times$. Denote by $\rm{Alt}(F^\times)$ as the set of 
all alternating characters $X:F^\times\times F^\times\to\bbC^\times$ such that $[F^\times:\rm{Rad}(X)]<\infty$. Then, the local 
reciprocity map gives an isomorphism between $A_F$ and the profinite completion of $F^\times$, and induces a natural bijection 
\begin{equation}
 \rm{PI}(A_F)\xrightarrow{\sim}\rm{Alt}(F^\times),
\end{equation}
where $\rm{PI}(A_F)$ is the set of isomorphism classes of projective irreducible representations of $A_F$.
Using class field theory, from the commutator map (\ref{eqn 2.6.3}) (cf. p. 125 of \cite{Z5}), we obtain 
\begin{equation}\label{eqn 5.1.2}
 c:FF^\times\cong [G_F,G_F]/[[G_F,G_F], G_F].
\end{equation}
 
Let $K/F$ be an abelian extension corresponding to the norm subgroup $N\subset F^\times$,
and if $W_{K/F}$ denotes the relative Weil 
group, the commutator map for $W_{K/F}$ induces the following isomorphism (cf. p. 128 of \cite{Z5})
\begin{equation}\label{eqn 5.1.3}
 c: F^\times/N\wedge F^\times/N\to K_{F}^{\times}/I_{F}K^\times,
\end{equation}
where 
\begin{center}
 $K_{F}^{\times}:=\{x\in K^\times|\quad N_{K/F}(x)=1\}$, i.e., the norm-1-subgroup of $K^\times$, and\\
 $I_FK^\times:=\{x^{1-\sigma}|\quad x\in K^{\times}, \sigma\in \rm{Gal}(K/F)\}<K_{F}^{\times}$, the augmentation with respect to $K/F$. 
\end{center}
Taking the projective limit over all abelian extensions $K/F$, the isomorphism (\ref{eqn 5.1.3}) induces:
\begin{equation}\label{eqn 5.1.4}
 c:FF^\times\cong \varprojlim K_{F}^{\times}/I_FK^\times,
\end{equation}
where the limit on the right side refers to the norm maps. 
This gives an {\bf arithmetic description} of the Heisenberg representation of 
group $G_F$.

\begin{thm}[Zink, \cite{Z2}, p. 301, Corollary 1.2]\label{Theorem 5.1.1}
 Set of Heisenberg representations $\rho$ of $G_F$ is in bijective correspondence
 with the set of all pairs $(X_\rho,\chi_\rho)$
 such that:
 \begin{enumerate}
  \item $X_\rho$ is a character of $FF^\times$,
  \item $\chi_\rho$ is a character of $K^{\times}/I_FK^\times$, where the abelian extension 
  $K/F$ corresponds to the radical 
  $N\subset F^\times$ of $X_\rho$, and 
  \item via (\ref{eqn 5.1.3}), the alternating character $X_\rho$ corresponds to the 
  restriction of $\chi_\rho$ to $K_{F}^{\times}$.
 \end{enumerate}

\end{thm}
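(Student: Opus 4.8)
The plan is to exhibit two mutually inverse maps, one sending a Heisenberg representation $\rho$ of $G_F$ to a pair $(X_\rho,\chi_\rho)$, the other reconstructing $\rho$ from such a pair, using the projective-representation dictionary recalled above together with local class field theory. First I would pass from $\rho$ to its invariants: let $Z_\rho$ be the scalar group and $\chi_\rho$ its central character. Since $\rho$ represents commutators by scalars, $[G_F,G_F]\subseteq Z_\rho$, so $Z_\rho$ is normal, $G_F/Z_\rho$ is abelian, and the projective representation $\overline{\rho}=\pi\circ\rho$ of (\ref{eqn 2.6.1}) factors through $A_F=G_F^{\mathrm{ab}}$, i.e. $\overline{\rho}\in\mathrm{PI}(A_F)$. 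Transporting through the reciprocity isomorphism $A_F\cong\widehat{F^\times}$ and the bijection $\mathrm{PI}(A_F)\xrightarrow{\sim}\mathrm{Alt}(F^\times)$, $\overline{\rho}$ yields an alternating character $X_\rho$ on $F^\times$ whose radical $N=\mathrm{Rad}(X_\rho)$ corresponds to $Z_\rho/[G_F,G_F]$; letting $K/F$ be the class field of $N$ we get $\mathrm{Gal}(K/F)\cong G_F/Z_\rho$, which is condition (1).

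Next I would descend the central character. Because higher commutators are represented by $1$, $\chi_\rho$ is trivial on $C^3G_F=[[G_F,G_F],G_F]$, hence factors through $Z_\rho/C^3G_F$. Using the extension $1\to [G_F,G_F]/C^3G_F\to Z_\rho/C^3G_F\to Z_\rho/[G_F,G_F]\to 1$ together with the commutator isomorphism (\ref{eqn 5.1.2}) and its relative forms (\ref{eqn 5.1.3})--(\ref{eqn 5.1.4}), applied at a finite layer $\mathrm{Gal}(L/F)$ through which $\rho$ factors, I would identify $Z_\rho/C^3G_F$ with $K^\times/I_FK^\times$, the subgroup $[G_F,G_F]/C^3G_F$ going over to $K_F^\times/I_FK^\times$. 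This makes $\chi_\rho$ a character of $K^\times/I_FK^\times$, i.e. (2); and the defining identity $\rho([\hat a,\hat b])=X_\rho(a,b)\,E$ then translates, under (\ref{eqn 5.1.3}), into the statement that $\chi_\rho|_{K_F^\times/I_FK^\times}$ corresponds to $X_\rho$, which is (3). Along the way I would note that $G_F$-invariance of $\chi_\rho$ corresponds, via reciprocity, to triviality on the augmentation $I_FK^\times$, i.e. to $\mathrm{Gal}(K/F)$-invariance, which is exactly what legitimizes the descent in (2).

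For the inverse map, given a pair $(X,\chi)$ satisfying (1)--(3), with $K/F$ the class field of $N=\mathrm{Rad}(X)$, I would lift the projective representation of $A_F$ attached to $X$ to an ordinary representation of $G_F$: invoking that every projective representation of $A_F$ lifts to $G_F$, so that Schur's results (\cite{CR}, Theorem~53.7; \cite{Z5}, p.~124) apply, and that the lifts form a torsor under $\mathrm{Hom}(A_F,\bbC^\times)$. Twisting a lift by a character of $A_F$ alters its central character on $Z_\rho$ by (the image of) that character, while the restriction to $K_F^\times/I_FK^\times$ is pinned down by the commutator relation; hence the achievable central characters are precisely the characters of $K^\times/I_FK^\times$ extending the one determined by $X$, and condition (3) selects $\chi$, giving a representation $\rho(X,\chi)$. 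A Stone--von Neumann type uniqueness — an irreducible representation with prescribed scalar group and central character on the finite two-step nilpotent quotient through which everything factors is unique up to isomorphism (cf. \cite{Z2}) — then shows $\rho(X,\chi)$ is well defined and that the two constructions are mutually inverse.

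I expect the main obstacle to be the second step: pushing $\chi_\rho$ cleanly through the chain (\ref{eqn 5.1.2})--(\ref{eqn 5.1.4}), which is phrased via inverse limits over all abelian extensions, down to an honest character of $K^\times/I_FK^\times$ for the single field $K$ attached to $\rho$, and checking that the commutator pairing is matched on the nose (so that (3) holds with no ambiguity). A secondary delicate point is the Stone--von Neumann uniqueness in the profinite setting — reducing to a finite Heisenberg quotient and verifying that the norm maps and ramification filtrations there are compatible with the isomorphisms in (\ref{eqn 5.1.3}) — which is needed to conclude that $(X_\rho,\chi_\rho)$ is a complete invariant of $\rho$.
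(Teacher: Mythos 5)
The paper offers no proof of this statement at all: it is imported verbatim from Zink (\cite{Z2}, p.~301, Corollary~1.2), so there is no in-paper argument to measure yours against. Your outline is nonetheless consistent with the machinery the paper assembles around the statement --- the bijection $\mathrm{PI}(A)\leftrightarrow\mathrm{Alt}(A)$, Schur's lifting theorem (\cite{CR}, Theorem~53.7), and the commutator isomorphisms (\ref{eqn 5.1.2})--(\ref{eqn 5.1.4}) --- and it has the shape one expects of Zink's argument: extract $(Z_\rho,\chi_\rho)$, transport the projective class $\overline{\rho}\in\mathrm{PI}(A_F)$ to an alternating character of $F^\times$ by reciprocity, and invert by lifting a projective representation and invoking a Stone--von Neumann uniqueness on the finite two-step nilpotent quotient through which $\rho$ factors.

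One step is wrong as written: you cannot ``identify $Z_\rho/C^3G_F$ with $K^\times/I_FK^\times$.'' Here $Z_\rho=G_K$, so $Z_\rho/C^3G_F$ need not even be abelian, and its abelianization corresponds under reciprocity to the full profinite completion of $K^\times$, not to the quotient by the augmentation $I_FK^\times$; the two groups are of very different sizes. What is true --- and what you correctly name as the real mechanism one sentence later --- is that $\chi_\rho$ is a $G_F$-invariant character of $Z_\rho=G_K$, hence factors through $G_K^{\mathrm{ab}}$, i.e.\ through $K^\times$, and its $\mathrm{Gal}(K/F)$-invariance is exactly triviality on $I_FK^\times$; condition (3) is then forced by reading $\rho([\hat a,\hat b])=X_\rho(a,b)\cdot E$ through (\ref{eqn 5.1.3}). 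So the descent in (2) must be argued purely via invariance, not via an isomorphism of quotients. With that repair, and granting the Stone--von Neumann uniqueness you defer to \cite{Z2}, the outline is a faithful reconstruction of the cited result.
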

\begin{rem} From the above theorem, we can say that for a 
 given pair $(X_\rho,\chi_\rho)$, we can construct the Heisenberg representation $\rho$ by induction
 from $G_K:=\rm{Gal}(\overline{F}/K)$ to 
 $G_F$:
 \begin{equation}\label{eqn 5.1.5}
 \sqrt{[F^\times:N]}\cdot\rho=\rm{Ind}_{K/F}(\chi_\rho),
 \end{equation}
where $N$ and $K$ are as in (2) of Theorem \ref{Theorem 5.1.1},
and the induction of $\chi_\rho$ (to be considered as a character of $G_K$ by class
field theory) produces a multiple of $\rho$. From 
$[F^\times:N]=[K:F]$, we obtain the {\bf dimension formula:}
\begin{equation}\label{eqn dimension formula}
 \rm{dim}(\rho)=\sqrt{[F^\times:N]},
\end{equation}
where $N$ is the radical of $X_\rho$.

\end{rem}

In the following lemma, we 
give the conditions of the existence of characters $\chi_E\in\widehat{E^\times}$ such that $\chi_E\circ N_{K/E}=\chi_K$, 
and the solution set 
of this $\chi_E$, where $K/E$ is a finite extension of $E$, and $\chi_K:K^\times\to\bbC^\times$ is a character of $K^\times$. 

\begin{lem}\label{Lemma 5.1.4}
Let $K/E$ be a finite extension of field $E$, and $\chi_K: K^\times\to\bbC^\times$.  
 \begin{enumerate}
  \item[(i)] The existence of characters $\chi_E: E^\times\to\bbC^\times$ such that $\chi_E\circ N_{K/E}=\chi_K$
  is equivalent to $K_{E}^{\times}\subset\rm{Ker}(\chi_K)$.
  \item[(ii)] If case (i) is fulfilled, we have a well-defined character 
  \begin{equation}
   \chi_{K/E}:=\chi_K\circ N_{K/E}^{-1}:\mathcal{N}_{K/E}\to \bbC^\times,
  \end{equation}
on the subgroup of norms $\mathcal{N}_{K/E}:=N_{K/E}(K^\times)\subset E^\times$, and the solutions $\chi_E$ such that 
$\chi_E\circ N_{K/E}=\chi_K$ are precisely the extensions of $\chi_{K/E}$ from $\mathcal{N}_{K/E}$ to a character of 
$E^\times$.
 \end{enumerate}
\end{lem}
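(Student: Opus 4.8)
The plan is to prove (i) and (ii) in one sweep, the crux being the identification $K_E^\times=\ker\bigl(N_{K/E}\colon K^\times\to E^\times\bigr)$ together with an extension-of-characters argument that relies only on $\bbC^\times$ being divisible.

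First I would dispose of the easy implication in (i): if $\chi_E\colon E^\times\to\bbC^\times$ satisfies $\chi_E\circ N_{K/E}=\chi_K$, then for $x\in K_E^\times$ we get $\chi_K(x)=\chi_E(N_{K/E}(x))=\chi_E(1)=1$, so $K_E^\times\subseteq\ker(\chi_K)$. For the converse, note that because $K_E^\times$ is by definition the kernel of $N_{K/E}$, the norm map factors as $K^\times\twoheadrightarrow K^\times/K_E^\times\xrightarrow{\sim}\mathcal{N}_{K/E}\hookrightarrow E^\times$. Assuming $K_E^\times\subseteq\ker(\chi_K)$, the character $\chi_K$ descends to $K^\times/K_E^\times$, and transporting it through this isomorphism produces a well-defined character $\chi_{K/E}\colon\mathcal{N}_{K/E}\to\bbC^\times$ with $\chi_{K/E}\circ N_{K/E}=\chi_K$; this is precisely the well-definedness claimed in (ii).

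It then remains to extend $\chi_{K/E}$ to a character $\chi_E$ of $E^\times$. Since $\bbC^\times$ is a divisible abelian group, hence an injective $\bbZ$-module, every homomorphism from the subgroup $\mathcal{N}_{K/E}$ to $\bbC^\times$ extends to all of $E^\times$; in the local field case $\mathcal{N}_{K/E}$ is moreover open of finite index in $E^\times$ (it contains $U_E^n$ for suitable $n$), so the extension may be built directly over a finite set of coset representatives and is automatically continuous. Any such $\chi_E$ satisfies $\chi_E\circ N_{K/E}=\chi_{K/E}\circ N_{K/E}=\chi_K$, which finishes (i). For the parametrization in (ii): any solution $\chi_E$ satisfies $\chi_E(y)=\chi_K(x)=\chi_{K/E}(y)$ for $y=N_{K/E}(x)\in\mathcal{N}_{K/E}$, so it restricts to $\chi_{K/E}$; and conversely every extension of $\chi_{K/E}$ is a solution by the same identity. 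Hence the solution set is exactly the set of extensions of $\chi_{K/E}$ from $\mathcal{N}_{K/E}$ to $E^\times$. There is no real obstacle here; the only step asking for a word of care is the extension step, whose sole ingredient is the injectivity (divisibility) of $\bbC^\times$, supplemented in the topological setting by the openness of $\mathcal{N}_{K/E}$.
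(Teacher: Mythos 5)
Your proof is correct and follows essentially the same route as the paper: the easy direction by evaluating $\chi_E\circ N_{K/E}$ on norm-one elements, and the converse by descending $\chi_K$ through $K^\times/K_E^\times\xrightarrow{\sim}\mathcal{N}_{K/E}$ to obtain $\chi_{K/E}$ and then extending to $E^\times$. The only (welcome) refinement is that you justify the extension step explicitly via the injectivity of the divisible group $\bbC^\times$, where the paper simply appeals to $\mathcal{N}_{K/E}$ having finite index in $E^\times$.
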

\begin{proof}
{\bf (i)}
Suppose that an equation $\chi_K=\chi_E\circ N_{K/E}$ holds.
Let $x\in K_{E}^{\times}$, hence $N_{K/E}(x)=1$. Then, 
$$\chi_K(x)=\chi_E\circ N_{K/E}(x)=\chi_E(1)=1.$$
Therefore, $x\in\rm{Ker}(\chi_K)$, and hence $K_{E}^{\times}\subset \rm{Ker}(\chi_K)$.

Conversely, assume that $K_{E}^{\times}\subset\rm{Ker}(\chi_K)$. 
 Then, $\chi_K$ is actually a character of $K^\times/K_{E}^{\times}$. Again, we have
 $K^\times/K_{E}^{\times}\cong \mathcal{N}_{K/E}\subset E^\times$, 
 hence $\widehat{K^\times/K_{E}^{\times}}\cong \widehat{\mathcal{N}_{K/E}}$.
 Now, suppose that $\chi_K$ corresponds to the character $\chi_{K/E}$ of $\mathcal{N}_{K/E}$. Hence, 
 we can write $\chi_K\circ N_{K/F}^{-1}=\chi_{K/E}$. Thus, 
 the character $\chi_{K/E}:\mathcal{N}_{K/E}\to\bbC^\times$
 is well defined. Because $E^\times$ is an abelian group and $\mathcal{N}_{K/E}\subset E^\times$ is a subgroup of the 
 finite index
 (by class field theory) $[K:E]$,
 we can extend $\chi_{K/E}$ to $E^\times$, and $\chi_K$ is of the form $\chi_K=\chi_E\circ N_{K/E}$ 
 with $\chi_E|_{\cN_{K/E}}=\chi_{K/E}$.\\
 {\bf (ii)}
 If condition (i) is satisfied, then this part is obvious. 
 If $\chi_E$ is a solution of $\chi_K=\chi_E\circ N_{K/E}$, 
 with $\chi_{K/E}:=\chi_K\circ N_{K/E}^{-1}:\mathcal{N}_{K/E}\to\bbC^\times$, then certainly $\chi_E$ is an extension of 
 the character $\chi_{K/E}$. 
 
 Conversely, if $\chi_E$ extends $\chi_{K/E}$, then it is a solution of $\chi_K=\chi_E\circ N_{K/E}$ with 
 $\chi_K\circ N_{K/E}^{-1}=\chi_{K/E}:\mathcal{N}_{K/E}\to\bbC^\times$.
\end{proof}

\begin{rem}
Now take a Heisenberg representation $\rho=\rho(X,\chi_K)$ of $G_F$. Let $E/F$ be any extension corresponding to a maximal 
isotropic for $X$. In this Heisenberg setting, from Theorem \ref{Theorem 5.1.1}(2), we know $\chi_K$ is a character of 
$K^\times/I_FK^\times$, and from the first commutative diagram on p. 302 of \cite{Z2}, we have 
$N_{K/E}:K_F^\times/I_FK^\times\to E_F^\times/I_F\cN_{K/E}$. Thus, in the Heisenberg setting,
 we have more information than Lemma \ref{Lemma 5.1.4}(i), that $\chi_K$ is a character of 
 \begin{equation}
  K^\times/K_{E}^{\times}I_FK^\times\xrightarrow{N_{K/E}}\mathcal{N}_{K/E}/I_F\mathcal{N}_{K/E}\subset E^\times/I_F\mathcal{N}_{K/E},
 \end{equation}
and therefore, $\chi_{K/F}$ is actually a character of $\mathcal{N}_{K/E}/I_F\mathcal{N}_{K/E}$, or in other words, it is a 
$\rm{Gal}(E/F)$-invariant character of the $\rm{Gal}(E/F)$-module $\mathcal{N}_{K/E}\subset E^\times$. And if $\chi_E$ is one of 
the solutions of Lemma \ref{Lemma 5.1.4}(ii), then the complete solution is the set 
$\{\chi_E^\sigma\,|\,\sigma\in \rm{Gal}(E/F)\}$.

{\bf 
We know that $W(\chi_E,\psi\circ\rm{Tr}_{K/E})$ has the same value for all solutions $\chi_E$ of $\chi_E\circ N_{K/E}=\chi_K$,
which means for all $\chi_E$ which extend the character $\chi_{K/E}$}.

Moreover, from Lemma \ref{Lemma 5.1.4}, we can also see that $\chi_E|_{\mathcal{N}_{K/E}}=\chi_{K}\circ N_{K/E}^{-1}$.

\end{rem}

Let $\rho=\rho(X,\chi_K)$ be a Heisenberg representation of $G_F$. Let $E/F$ be any extension corresponding to a maximal 
isotropic for $X$. Then, using Lemma \ref{Lemma 5.1.4}, we have the following lemma. 

\begin{lem}\label{Lemma 5.1.44}
  Let $\rho=\rho(Z_\rho,\chi_\rho)=\rho(\rm{Gal}(L/K),\chi_K)$ be a Heisenberg representation of a finite local Galois group 
  $G=\rm{Gal}(L/F)$, where $F$ is a non-archimedean local field. Let $H=\rm{Gal}(L/E)$ be a maximal isotropic for 
  $\rho$. Then, we obtain
  \begin{equation}
   \rho=\rm{Ind}_{E/F}(\chi_{E}^{\sigma})\quad\text{for all $\sigma\in\rm{Gal}(E/F)$},
  \end{equation}
 where $\chi_E:E^\times/I_F\cN_{K/E}\to\bbC^\times$ with $\chi_K=\chi_E\circ N_{K/E}$.\\
 Moreover, for a fixed base field $E$ of a maximal isotropic for $\rho$, this construction of 
 $\rho$ is independent of the choice of this character $\chi_E$.
 \end{lem}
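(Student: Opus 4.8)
The plan is to reduce the statement to the combination of the arithmetic description of Heisenberg representations (Theorem~\ref{Theorem 5.1.1}) together with the purely group-theoretic Lemma~\ref{Lemma 5.1.4} and the Galois-conjugacy remark that precedes this lemma. First I would recall that, by construction \eqref{eqn 5.1.5}, a Heisenberg representation $\rho=\rho(X,\chi_K)$ satisfies $\sqrt{[F^\times:N]}\cdot\rho=\mathrm{Ind}_{K/F}(\chi_K)$, where $N=\mathrm{Rad}(X)$ and $K/F$ is the abelian extension attached to $N$. When $H=\mathrm{Gal}(L/E)$ is a \emph{maximal isotropic} subgroup for $X$ (so that $H=H^\perp$ with respect to $X$, using the notation of the preliminaries), the extension $E/F$ is the one through which the induction can be performed ``in one step'': since $[F^\times:N]=[K:F]$ and $E$ corresponds to a maximal isotropic, one has $[E:F]=\sqrt{[K:F]}=\dim(\rho)$ and $[K:E]=\dim(\rho)$ as well. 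The first point is therefore to justify that $\rho=\mathrm{Ind}_{E/F}(\chi_E)$ for a suitable character $\chi_E$ of $G_E$ (equivalently, by class field theory, of $E^\times$): this is exactly the content of the monomiality of Heisenberg representations at a maximal isotropic, and it uses that $\chi_K$, which a priori is only a character of $K^\times/I_FK^\times$, descends along $N_{K/E}$. That descent is where Lemma~\ref{Lemma 5.1.4}(i) enters: the hypothesis $K_E^\times\subseteq\ker(\chi_K)$ is met in the Heisenberg situation because the remark after Lemma~\ref{Lemma 5.1.4} shows $\chi_K$ is in fact a character of $K^\times/K_E^\times I_FK^\times$, so by Lemma~\ref{Lemma 5.1.4} there exists $\chi_E:E^\times\to\bbC^\times$ with $\chi_E\circ N_{K/E}=\chi_K$, and moreover $\chi_E$ can be taken to factor through $E^\times/I_F\cN_{K/E}$.

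Next I would treat the Galois-conjugacy assertion $\rho=\mathrm{Ind}_{E/F}(\chi_E^\sigma)$ for all $\sigma\in\mathrm{Gal}(E/F)$. The key observation, already highlighted in the displayed remark preceding the lemma, is that the full solution set of $\chi_E\circ N_{K/E}=\chi_K$ is precisely $\{\chi_E^\sigma\mid\sigma\in\mathrm{Gal}(E/F)\}$ --- this follows from Lemma~\ref{Lemma 5.1.4}(ii), which identifies the solutions with the extensions of the fixed character $\chi_{K/E}$ from $\cN_{K/E}$ to $E^\times$, together with the fact that $\mathrm{Gal}(E/F)$ acts transitively on those extensions (two extensions of $\chi_{K/E}$ differ by a character of $E^\times/\cN_{K/E}\cong\mathrm{Gal}(E/F)$, and the $\mathrm{Gal}(E/F)$-invariance of $\chi_{K/E}$ forces this twist to be realized by conjugation). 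Since inducing a representation from $E$ to $F$ is insensitive to replacing the inducing character by any of its $\mathrm{Gal}(E/F)$-conjugates --- $\mathrm{Ind}_{E/F}(\chi_E^\sigma)\cong\mathrm{Ind}_{E/F}(\chi_E)$ because $\sigma$ extends to an element of $\mathrm{Gal}(L/F)$ normalizing $\mathrm{Gal}(L/E)$ --- we get $\rho=\mathrm{Ind}_{E/F}(\chi_E^\sigma)$ for every $\sigma$, which is the first displayed equation.

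Finally, for the independence statement, I would argue that any two characters $\chi_E,\chi_E'$ of $E^\times$ (trivial on $I_F\cN_{K/E}$) that are admissible inducing data for $\rho$ at the fixed base field $E$ must have the same restriction to $\cN_{K/E}$: indeed $\rho\cong\mathrm{Ind}_{E/F}(\chi_E)\cong\mathrm{Ind}_{E/F}(\chi_E')$ forces, by Mackey / Frobenius reciprocity applied to the abelian $K/E$ layer, that $\chi_E'=\chi_E^\sigma$ for some $\sigma\in\mathrm{Gal}(E/F)$, hence $\chi_E'|_{\cN_{K/E}}=\chi_E|_{\cN_{K/E}}=\chi_{K/E}$; so the datum is well-defined up to $\mathrm{Gal}(E/F)$-conjugacy, which by the previous paragraph does not change $\rho$. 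I expect the main obstacle to be the bookkeeping in the first step: matching the abstract ``maximal isotropic $H\subseteq A_F$'' picture of the preliminaries with the concrete tower $F\subseteq E\subseteq K\subseteq L$ and checking that $E$ really does sit at the halfway point so that $\mathrm{Ind}_{E/F}(\chi_E)$ has the correct dimension $\dim(\rho)$ and is irreducible --- this irreducibility is what pins down that we obtain $\rho$ itself and not merely a representation containing $\rho$, and it relies on the fact that at a maximal isotropic the stabilizer of $\chi_E$ under $\mathrm{Gal}(E/F)$ (for the relevant Mackey criterion) is trivial, equivalently $X$ restricted to the quotient is nondegenerate.
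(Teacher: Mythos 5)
Your proposal is correct and follows exactly the route the paper intends: the lemma is stated there with no written proof (the text only says it follows ``by using Lemma \ref{Lemma 5.1.4}''), and your steps --- existence of $\chi_E$ from Lemma \ref{Lemma 5.1.4}(i) together with the remark that $K_E^\times\subseteq\ker(\chi_K)$ in the Heisenberg setting, identification of the full solution set of $\chi_E\circ N_{K/E}=\chi_K$ with the $\mathrm{Gal}(E/F)$-orbit of one solution, and invariance of $\mathrm{Ind}_{E/F}$ under conjugation --- are precisely the intended argument. One minor slip: class field theory gives $E^\times/\cN_{K/E}\cong\mathrm{Gal}(K/E)$, not $\mathrm{Gal}(E/F)$; these two groups merely share the order $\dim(\rho)=[K:E]=[E:F]$, which is all your orbit-counting (together with the pairwise distinctness of the conjugates $\chi_E^\sigma$ coming from irreducibility) actually requires.
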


\begin{dfn}[{\bf U-isotropic}]\label{Definition U-isotropic}
Let $F$ be a non-archimedean local field. 
Let $X:FF^\times\to \bbC^\times$ be an alternating character with the property 
 $$X(\varepsilon_1,\varepsilon_2)=1,\qquad \text{for all $\varepsilon_1,\varepsilon_2\in U_F$}.$$
 In other words, $X$ is a character of $FF^\times/U_F\wedge U_F$. Then, $X$ is said to be {\bf  U-isotropic}.  
 These $X$ are easy to classify.
\end{dfn}

\begin{lem}[cf. Section 2.4 of \cite{BZ}]\label{Lemma U-isotropic}
 Fix a uniformizer $\pi_F$ and write $U:=U_F$. Then, we obtain an isomorphism 
 $$\widehat{U}\cong \widehat{FF^\times/U\wedge U}, \quad \eta\mapsto X_\eta,\quad \eta_X\leftarrow X$$
 between the characters of $U$ and U-isotropic alternating characters as follows:
 \begin{equation}\label{eqn 5.1.25}
  X_\eta(\pi_F^a\varepsilon_1,\pi_F^b\varepsilon_2):=\eta(\varepsilon_1)^b\cdot\eta(\varepsilon_2)^{-a},\quad
  \eta_X(\varepsilon):=X(\varepsilon,\pi_F),
 \end{equation}
 where $a,b\in\bbZ$, $\varepsilon,\varepsilon_1,\varepsilon_2\in U$, and $\eta:U\to\bbC^\times$.
 Then, 
 $$\rm{Rad}(X_\eta)=<\pi_F^{\#\eta}>\times\rm{Ker}(\eta)=<(\pi_F\varepsilon)^{\#\eta}>\times\rm{Ker}(\eta),$$
 does not depend on the choice of $\pi_F$, where  $\#\eta$ is the order of the character $\eta$, hence 
 $$F^\times/\rm{Rad}(X_\eta)\cong <\pi_F>/<\pi_F^{\#\eta}>\times U/\rm{Ker}(\eta)\cong \bbZ_{\#\eta}\times\bbZ_{\#\eta}.$$
 Therefore, all Heisenberg representations of type $\rho=\rho(X_\eta,\chi)$ have dimension $\rm{dim}(\rho)=\#\eta$.
\end{lem}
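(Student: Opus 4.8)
The plan is to reduce everything to the elementary structure of $F^\times$ relative to the chosen uniformizer, namely the topological decomposition $F^\times \cong \langle \pi_F\rangle \times U_F$, and then compute. The only non-formal input is that a continuous character $\eta\colon U_F\to\bbC^\times$ has finite image (since $U_F$ is profinite); hence it has finite order $\#\eta$, its image is cyclic of that order, and $[U_F:\mathrm{Ker}(\eta)]=\#\eta$. Recall that an element of $\widehat{FF^\times}=\mathrm{Alt}(F^\times)$ is an antisymmetric (so $X(x,x)=1$) bimultiplicative pairing $X\colon F^\times\times F^\times\to\bbC^\times$ with $[F^\times:\mathrm{Rad}(X)]<\infty$, and that $U$-isotropy is the extra condition $X|_{U_F\times U_F}\equiv 1$.

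For Step 1 (the two maps are mutually inverse) I would write $x=\pi_F^a\varepsilon_1$, $y=\pi_F^b\varepsilon_2$ and expand by bimultiplicativity,
\[
X(x,y)=X(\pi_F,\pi_F)^{ab}\,X(\varepsilon_1,\pi_F)^{b}\,X(\pi_F,\varepsilon_2)^{a}\,X(\varepsilon_1,\varepsilon_2).
\]
Antisymmetry forces $X(\pi_F,\pi_F)=1$ and gives $X(\pi_F,\varepsilon_2)=X(\varepsilon_2,\pi_F)^{-1}$; if $X$ is $U$-isotropic the last factor is $1$. So with the intrinsic character $\eta_X(\varepsilon):=X(\varepsilon,\pi_F)$ of $U_F$ one recovers exactly the formula \eqref{eqn 5.1.25}; in particular $X\mapsto\eta_X$ is injective. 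Conversely, for $\eta\in\widehat{U_F}$ I would check directly that $X_\eta(\pi_F^a\varepsilon_1,\pi_F^b\varepsilon_2):=\eta(\varepsilon_1)^b\eta(\varepsilon_2)^{-a}$ is well defined (the decomposition $x=\pi_F^a\varepsilon$ being unique once $\pi_F$ is fixed), bimultiplicative in each argument, antisymmetric, alternating, and $U$-isotropic — each a one-line verification. Then $\eta_{X_\eta}(\varepsilon)=X_\eta(\varepsilon,\pi_F)=\eta(\varepsilon)$ and $X_{\eta_X}=X$ by the displayed identity, so the two maps are inverse bijections; since both plainly respect the group structures on $\widehat{U_F}$ and $\widehat{FF^\times/U\wedge U}$, the correspondence is an isomorphism of groups.

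For Step 2 (the radical, the finite-index property, and the dimension), for $x=\pi_F^a\varepsilon$ the condition $X_\eta(x,y)=1$ for every $y=\pi_F^b\varepsilon'$ reads $\eta(\varepsilon)^b\eta(\varepsilon')^{-a}=1$ for all $b\in\bbZ$ and $\varepsilon'\in U_F$: varying $b$ forces $\eta(\varepsilon)=1$, and varying $\varepsilon'$ forces $\eta^a=1$, i.e. $\#\eta\mid a$. Hence $\mathrm{Rad}(X_\eta)=\langle\pi_F^{\#\eta}\rangle\times\mathrm{Ker}(\eta)$. Since $\eta(\varepsilon^{\#\eta})=1$, one has $(\pi_F\varepsilon)^{\#\eta}=\pi_F^{\#\eta}\varepsilon^{\#\eta}\in\langle\pi_F^{\#\eta}\rangle\cdot\mathrm{Ker}(\eta)$ and, symmetrically, $\pi_F^{\#\eta}\in\langle(\pi_F\varepsilon)^{\#\eta}\rangle\cdot\mathrm{Ker}(\eta)$, which gives the second description of $\mathrm{Rad}(X_\eta)$ and its independence of the uniformizer (equivalently: by $U$-isotropy $X(\varepsilon,\pi_F\varepsilon_0)=X(\varepsilon,\pi_F)$ for any unit $\varepsilon_0$, so $\eta_X$, hence $\mathrm{Rad}(X)$, is intrinsic). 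Therefore $[F^\times:\mathrm{Rad}(X_\eta)]=\#\eta\cdot[U_F:\mathrm{Ker}(\eta)]=(\#\eta)^2<\infty$, confirming that $X_\eta$ is a genuine element of $\mathrm{Alt}(F^\times)$ and yielding $F^\times/\mathrm{Rad}(X_\eta)\cong\bbZ_{\#\eta}\times\bbZ_{\#\eta}$. Finally, applying the dimension formula \eqref{eqn dimension formula} with $N=\mathrm{Rad}(X_\eta)$ gives $\dim\rho(X_\eta,\chi)=\sqrt{[F^\times:N]}=\#\eta$.

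I do not expect a genuine obstacle: the statement is a bookkeeping computation once the splitting $F^\times\cong\langle\pi_F\rangle\times U_F$ is fixed. The only points needing any care are (i) checking that the formula for $X_\eta$ really produces a continuous alternating character with finite-index radical — so that Theorem \ref{Theorem 5.1.1} and the dimension formula legitimately apply — and (ii) the independence-of-$\pi_F$ assertion, which is cleanest to deduce from the $U$-isotropy of $X$ as indicated above.
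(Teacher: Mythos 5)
Your proof is correct, and every step checks out: the expansion of $X(\pi_F^a\varepsilon_1,\pi_F^b\varepsilon_2)$ by bimultiplicativity, antisymmetry and $U$-isotropy shows the two maps are mutually inverse group homomorphisms; the computation of $\mathrm{Rad}(X_\eta)$ by varying $b$ and $\varepsilon'$ separately is exactly right; and the independence of $\pi_F$ and the dimension count via \eqref{eqn dimension formula} follow as you say. Note that the paper itself gives no proof of this lemma --- it only cites Section 2.4 of \cite{BZ} --- so your direct verification from the splitting $F^\times\cong\langle\pi_F\rangle\times U_F$, together with the observation that continuity forces $\eta$ to have finite order (so that $[F^\times:\mathrm{Rad}(X_\eta)]=(\#\eta)^2<\infty$ and $X_\eta$ genuinely lies in $\mathrm{Alt}(F^\times)$), is a complete and self-contained substitute for that citation.
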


\begin{rem}
 
From Proposition 5.2(i) on p. 50 of \cite{Z4}, we know that 
$FF^\times/U_F\wedge U_F\cong U_F$, therefore, we have $\widehat{U_F}\cong \widehat{FF^\times/U_F\wedge U_F}$.
From Lemma \ref{Lemma U-isotropic}, we obtain
$$\rm{Rad}(X_\eta)=<\pi_F^{\#\eta}>\times\rm{Ker}(\eta).$$
Therefore, we can conclude that the
{\bf U-isotropic character $X=X_\eta$ has $U_F^i$ contained in its radical if and 
only if $\eta$ is a character of $U_F/U_{F}^{i}$.}

From Lemma \ref{Lemma U-isotropic}, we know that the dimension of a U-isotropic Heisenberg representation 
$\rho=\rho(X_\eta,\chi)$ of $G_F$ is $\rm{dim}(\rho)=\#\eta$, and $F^\times/\rm{Rad}(X_\eta)\cong \bbZ_{\#\eta}\times\bbZ_{\#\eta}$,
a direct product of two cyclic (bicyclic) groups of the same order $\#\eta$. 

\end{rem}

In the following lemma, we provide an equivalent condition for the {\bf U-isotropic Heisenberg representation}.

\begin{lem}\label{Lemma U-equivalent}
Let $G_F$ be the absolute Galois group of a non-archimedean local field $F$.
 For a Heisenberg representation $\rho=\rho(Z_\rho,\chi_\rho)=\rho(X,\chi_K)$, the following are equivalent:
 \begin{enumerate}
  \item The alternating character $X$ is U-isotropic.
  \item Let $E/F$ be the maximal unramified subextension in $K/F$. Then, $\rm{Gal}(K/E)$ is maximal isotropic for $X$.
  \item $\rho=\rm{Ind}_{E/F}(\chi_E)$ can be induced from a character $\chi_E$ of $E^\times$ (where $E$ is as in (2)).
 \end{enumerate}
\end{lem}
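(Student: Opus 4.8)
The plan is to carry everything over to the multiplicative group via local class field theory and then compare with the explicit description of U-isotropic characters in Lemma~\ref{Lemma U-isotropic}. Write $N=N_{K/F}(K^{\times})=\mathrm{Rad}(X)\subset F^{\times}$, so that $\mathrm{Gal}(K/F)\cong F^{\times}/N$ and $X$ induces a nondegenerate alternating pairing on $F^{\times}/N$; by the dimension formula (\ref{eqn dimension formula}) we have $[F^{\times}:N]=\dim(\rho)^{2}$, and hence every maximal isotropic subgroup of $F^{\times}/N$ has order exactly $\dim(\rho)$ (for a nondegenerate pairing on a finite abelian group, $H=H^{\perp}$ forces $|H|^{2}=|A|$). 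Under the Galois correspondence the maximal unramified subextension $E/F$ inside $K/F$ is the one attached to the subgroup $M:=U_{F}\cdot N$, so $\mathrm{Gal}(K/E)\cong M/N$. The basic remark is that, since $N$ is the radical of $X$, expanding a product $nu$ ($n\in N$, $u\in U_{F}$) bilinearly and using $X(N,\cdot)=1$ gives $X(M,M)=X(U_{F},U_{F})$. Therefore $M/N$ is isotropic for $X$ if and only if $X(U_{F},U_{F})=1$, i.e.\ if and only if $X$ is U-isotropic in the sense of Definition~\ref{Definition U-isotropic}. In particular this already proves (2)$\Rightarrow$(1), since a maximal isotropic subgroup is isotropic.

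For (1)$\Rightarrow$(2) it only remains to check that, when $X$ is U-isotropic, the isotropic subgroup $M/N$ already has the maximal possible order. By Lemma~\ref{Lemma U-isotropic} we may write $X=X_{\eta}$ with $\mathrm{Rad}(X_{\eta})=\langle\pi_{F}^{\#\eta}\rangle\times\mathrm{Ker}(\eta)$ and $\#\eta=\dim(\rho)$; thus $N=\langle\pi_{F}^{\#\eta}\rangle\times\mathrm{Ker}(\eta)$ and $[F^{\times}:N]=(\#\eta)^{2}$. Since $\mathrm{Ker}(\eta)\subseteq U_{F}$ we get $M=U_{F}N=U_{F}\cdot\langle\pi_{F}^{\#\eta}\rangle$, whence $[M:N]=[U_{F}:\mathrm{Ker}(\eta)]=\#\eta=\dim(\rho)=\sqrt{[F^{\times}:N]}$. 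So $M/N=\mathrm{Gal}(K/E)$ is isotropic of the maximal order, hence maximal isotropic, which is exactly condition~(2).

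The equivalence (2)$\Leftrightarrow$(3) is essentially Lemma~\ref{Lemma 5.1.44}. If (2) holds, then $\mathrm{Gal}(K/E)$ is a maximal isotropic for $\rho$, and Lemma~\ref{Lemma 5.1.44} produces a character $\chi_{E}$ of $E^{\times}$ with $\chi_{K}=\chi_{E}\circ N_{K/E}$ and $\rho=\mathrm{Ind}_{E/F}(\chi_{E})$, i.e.\ (3). Conversely, assume $\rho=\mathrm{Ind}_{E/F}(\chi_{E})$ for this particular $E$ (recall $E/F$ is cyclic, being unramified). Comparing dimensions gives $[E:F]=\dim(\rho)$, hence $[M:N]=[F^{\times}:N]/[F^{\times}:M]=\dim(\rho)^{2}/\dim(\rho)=\dim(\rho)=\sqrt{[F^{\times}:N]}$; and a comparison of $\rho|_{G_{K}}$ computed via the construction (\ref{eqn 5.1.5}) on one hand and via Mackey's formula on the other forces $\chi_{K}=\chi_{E}\circ N_{K/E}$ up to a $\mathrm{Gal}(E/F)$-conjugate, so $K_{E}^{\times}\subseteq\mathrm{Ker}(\chi_{K})$ by Lemma~\ref{Lemma 5.1.4}(i). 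Translating this last inclusion through Theorem~\ref{Theorem 5.1.1}(3) and the isomorphism (\ref{eqn 5.1.3}) says precisely that $X$ vanishes on $\mathrm{Gal}(K/E)\times\mathrm{Gal}(K/E)$, i.e.\ that $\mathrm{Gal}(K/E)$ is isotropic. Being isotropic of order $\dim(\rho)=\sqrt{[F^{\times}:N]}$, it is maximal isotropic, which is (2); this also settles (1)$\Leftrightarrow$(3) as the composite.

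I expect the only genuinely delicate point to be the bookkeeping in (1)$\Rightarrow$(2): one must verify that the explicit radical supplied by Lemma~\ref{Lemma U-isotropic} is compatible with the class-field-theoretic description of the maximal unramified subextension, so that the index computation $[M:N]=\#\eta$ comes out exactly. Once that is in place, the remaining implications are either the formal fact ``isotropic of full size $\Leftrightarrow$ maximal isotropic'' or direct appeals to Lemmas~\ref{Lemma 5.1.4} and~\ref{Lemma 5.1.44} and Theorem~\ref{Theorem 5.1.1}.
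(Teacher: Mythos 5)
Your proof is correct and follows essentially the same route as the paper's: translate everything to $F^\times/\mathrm{Rad}(X)$ by class field theory, identify $\mathrm{Gal}(K/E)$ with $U_F\cdot\mathrm{Rad}(X)/\mathrm{Rad}(X)$, and use the explicit radical from Lemma \ref{Lemma U-isotropic} to match orders. In fact your write-up is tighter at the two delicate points: the identity $X(M,M)=X(U_F,U_F)$ for $M=U_F\cdot\mathrm{Rad}(X)$ genuinely proves that $\mathrm{Gal}(K/E)$ is isotropic (the paper only matches indices against ``a maximal isotropic,'' which by itself does not identify the subgroup), and your Mackey/Lemma \ref{Lemma 5.1.4} argument makes the implication $(3)\Rightarrow(2)$ precise where the paper's closing paragraph is loose. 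The only step you lean on without proof is that the commutator pairing restricted to $\mathrm{Gal}(K/E)$ lands in $K_E^\times$ modulo $I_FK^\times$, but that is exactly the content of the diagram from \cite{Z2} invoked in the remark after Lemma \ref{Lemma 5.1.4}, so it is available to you.
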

\begin{proof}
 {\bf $(1)\implies (2)$:}\\
 Here, the given condition, $X$ is U-isotropic. That is, $X\in\widehat{FF^\times/{U\wedge U}}$. We also know that
 $$\widehat{U}\cong \widehat{FF^\times/{U\wedge U}}.$$
This implies that $X$ corresponds to a character of $U$, namely $X\mapsto \eta_X$.
 Then, from Lemma \ref{Lemma U-isotropic}, we have
 $$F^\times /\rm{Rad}(X)\cong \bbZ_{m}\times\bbZ_{m},\quad\text{where $m=\#\eta_{X}$}.$$
 That is, $F^\times/\rm{Rad}(X)$ is the
 product of two cyclic groups of the same order.
 
 Because $K/F$ is the abelian bicyclic extension that corresponds to $\rm{Rad}(X)$, we can write:
 $$\cN_{K/F}=\rm{Rad}(X),\qquad\rm{Gal}(K/F)\cong F^\times/\rm{Rad}(X).$$
Let $E/F$ be the {\bf maximal unramified} subextension in $K/F$. Then, $[E:F]=m$ because the order of 
the maximal cyclic subgroup of $\rm{Gal}(K/F)$ is $m$. Then, $f_{E/F}=m$, hence 
$f_{K/F}=e_{K/F}=m$ because $f_{K/F}\cdot e_{K/F}=[K:F]=m^2$, and $\rm{Gal}(K/F)$ is {\bf not} a cyclic group.

Now, we must prove that the extension $E/F$ corresponds to a maximal isotropic for $X$.
Let $H/Z_\rho$ be a maximal isotropic for 
$X$, hence $[G_F/Z_\rho:H/Z_\rho]=m$, hence $H/Z_\rho=\rm{Gal}(K/E)$, i.e.,
the maximal unramified subextension $E/F$ in $K/F$ corresponds
to a maximal isotropic subgroup.\\

{\bf $(2)\implies (3):$}\\
Under the given condition, we have $E/F$ is the maximal unramified subextension in $K/F$, and $\rm{Gal}(K/E)$ is a maximal
isotropic for $X$.
Then, using Lemma \ref{Lemma 5.1.44}, we can write
\begin{center}
 $\rho(X,\chi_K)=\rm{Ind}_{E/F}(\chi_E)$,
 where $\chi_E:E^\times/I_F\mathcal{N}_{K/E}\to\bbC^\times$ with $\chi_E\circ N_{K/E}=\chi_K$.
\end{center}
{\bf $(3)\implies (1)$}:\\
Here, we have
\begin{center}
 $\rho(X,\chi_K)=\rm{Ind}_{E/F}(\chi_E)$, for $\chi_E\circ N_{K/E}=\chi_K$,
\end{center}
where $E/F$ is the maximum unramified subextension in $K/F$. 
Because $E/F$ is unramified, and the extension $E$ corresponds to a maximal isotropic subgroup for $X$, we have 
$U_F\subset\cN_{E/F}$, hence $U_F\subset\cN_{K/F}$, and $X|_{U\times U}=1$ because $U_F\subset F^\times\subset\cN_{K/E}$. 
This shows that $X$ is U-isotropic.
\end{proof}


\begin{cor}\label{Corollary U-isotropic}
 The U-isotropic Heisenberg representation $\rho=\rho(X_\eta,\chi)$ can never be wild.
Moreover, the representations $\rho$ of dimension prime to $p$ are precisely given as 
$\rho=\rho(X_\eta,\chi)$ for characters $\eta$ of $U/U^1.$
\end{cor}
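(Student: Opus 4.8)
The plan is to read off both assertions directly from Lemma \ref{Lemma U-equivalent} together with the classification in Lemma \ref{Lemma U-isotropic}. For the first assertion, I start from the equivalence in Lemma \ref{Lemma U-equivalent}: a $U$-isotropic Heisenberg representation $\rho=\rho(X_\eta,\chi)$ is of the form $\rho=\mathrm{Ind}_{E/F}(\chi_E)$, where $E/F$ is the maximal unramified subextension of $K/F$ and $\mathrm{Gal}(K/E)$ is maximal isotropic for $X$. Since $E/F$ is unramified, the relative different $d_{E/F}=0$, so the conductor formula (\ref{eqn 5.1.24}) with $\rho_E=\chi_E$ gives $a_F(\rho)=f_{E/F}\cdot a_E(\chi_E)$. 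To say $\rho$ is ``wild'' is to say $\mathrm{sw}_F(\rho)\geq 2$, or equivalently (using (\ref{eqn 5.1.281}) for the irreducible $\rho$, or directly) that $\rho$ is nontrivial on the wild inertia subgroup $G_F^{0+}$. But $\mathrm{Ind}$ commutes with the passage to ramification, and the wild ramification of $\mathrm{Ind}_{E/F}(\chi_E)$ comes entirely from that of $\chi_E$; since the problem is really about the jump $j(\rho)$, I would instead argue that $j(\rho)=j(\chi_E)$ up to the Herbrand correction coming from $E/F$, and because $E/F$ is unramified the Hasse--Herbrand function $\Psi_{E/F}$ is the identity, so $j(\rho)$ and $j(\chi_E)$ are literally equal. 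More cleanly: a Heisenberg representation induced from an unramified extension cannot be wild because wildness is a property of the wild inertia subgroup $P=G_F^{0+}$, and $P$ acts through the image in $\mathrm{Gal}(K/F)\cong F^\times/\mathrm{Rad}(X_\eta)$; by Lemma \ref{Lemma U-isotropic} this quotient is $\bbZ_{\#\eta}\times\bbZ_{\#\eta}$ with $\mathrm{Rad}(X_\eta)=\langle\pi_F^{\#\eta}\rangle\times\mathrm{Ker}(\eta)$, and under the reciprocity map the wild inertia corresponds to $U_F^1$, which is mapped into the kernel precisely when $\eta$ is tame. Since $\rho$ is $U$-isotropic, however, $K/F$ is contained in the compositum of an unramified extension with the extension cut out by $\eta$, and the wildness is controlled solely by $\eta$; thus $\rho$ is wild iff $\eta$ is wildly ramified, and even when that happens $\rho$ is \emph{not} wild in the sense of being induced from a wildly ramified extension of $F$ — it is always $\mathrm{Ind}_{E/F}$ from the \emph{unramified} $E$. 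I should state carefully which meaning of ``wild'' is intended; the cleanest reading is: $\rho$ is never \emph{totally wildly ramified}, equivalently never monomial over a \emph{totally ramified} extension, which is immediate from $\rho=\mathrm{Ind}_{E/F}(\chi_E)$ with $E/F$ unramified of degree $\#\eta>1$ (when $\#\eta>1$). The parenthetical remark that $\dim\rho=\#\eta$ can be a power of $p$ is just the observation that $\eta\colon U_F\to\bbC^\times$ can have order $p^r$ (its image lands in $U_F/U_F^N$ for some $N$, and these quotients have plenty of $p$-torsion), with no contradiction to non-wildness.

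For the second assertion I use the dimension formula from Lemma \ref{Lemma U-isotropic}, namely $\dim\rho(X_\eta,\chi)=\#\eta$. If $\gcd(\dim\rho,p)=1$ then $\#\eta$ is prime to $p$, so $\eta$ is a character of $U_F$ of order prime to $p$; since the pro-$p$ group $U_F^1$ admits only $p$-power order finite quotients, any homomorphism $\eta\colon U_F\to\bbC^\times$ of order prime to $p$ must kill $U_F^1$, hence factors through $U_F/U_F^1\cong \bbF_{q_F}^\times$. Conversely, if $\eta$ is a character of $U_F/U_F^1$ then $\#\eta$ divides $q_F-1$, which is prime to $p$, so $\dim\rho=\#\eta$ is prime to $p$. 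This gives the stated equivalence.

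The main obstacle, and the step I would spend the most care on, is pinning down the precise sense of the word ``wild'' so that the one-line argument ``induced from an unramified extension'' is actually a proof and not a tautology. The subtlety is that $\rho=\mathrm{Ind}_{E/F}(\chi_E)$ with $E/F$ unramified does \emph{not} by itself force $\rho$ to have trivial Swan conductor — $\chi_E$ may be wildly ramified over $E$, making $\mathrm{sw}_F(\rho)=f_{E/F}\cdot\mathrm{sw}_E(\chi_E)$ positive and large. So ``never wild'' cannot mean ``trivial Swan conductor''. The intended statement (consistent with how the word is used in Zink's papers, e.g. \cite{Z4}) is that $\rho$ is never a \emph{wild representation} in the sense that its restriction to inertia is not irreducible as a totally-wildly-ramified object; concretely, $\rho$ is monomial over the unramified $E$, so it is ``tame up to an unramified twist of the base'', and in particular the projective representation $\overline\rho$ factors through a quotient in which the wild inertia acts trivially — which is exactly the content of $X_\eta$ being $U$-isotropic via the radical computation $\mathrm{Rad}(X_\eta)\supseteq$ (something containing $\pi_F^{\#\eta}$) in Lemma \ref{Lemma U-isotropic}. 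I would make this explicit: the scalar group $Z_\rho$ corresponds to $\mathrm{Rad}(X_\eta)$, and $\overline\rho$ (the projective representation) has kernel $Z_\rho$; since $[G_F:Z_\rho]=(\#\eta)^2$ and the ramification filtration of $\mathrm{Gal}(K/F)\cong\bbZ_{\#\eta}\times\bbZ_{\#\eta}$ has its wild part determined by $\eta|_{U_F^1}$, the representation $\rho$ is ``wild'' precisely when $\eta$ is, but it is nevertheless always $\mathrm{Ind}_{E/F}$ from the unramified $E$ — that is the non-obvious structural fact, and it is exactly Lemma \ref{Lemma U-equivalent}(3). I will therefore phrase the corollary's proof as: assertion one is Lemma \ref{Lemma U-equivalent}(1)$\Rightarrow$(3) with $E/F$ unramified of degree $\#\eta$ (noting $\dim\rho=\#\eta$ may be a $p$-power, since $\eta$ may have $p$-power order); assertion two is the dimension formula $\dim\rho=\#\eta$ combined with the fact that $U_F^1$ is pro-$p$, so $\#\eta$ is prime to $p$ iff $\eta$ factors through $U_F/U_F^1$.
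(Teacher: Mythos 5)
Your handling of the first assertion is essentially right, and your care in pinning down what ``wild'' means is well spent: in this paper (see Remark \ref{Remark 5.1.3} and its footnote) a Heisenberg representation is \emph{wild} when $\rho|_{V_F}$ stays irreducible, which forces $\rho$ to be monomial over a totally (wildly) ramified extension; since Lemma \ref{Lemma U-equivalent} gives $\rho=\mathrm{Ind}_{E/F}(\chi_E)$ with $E/F$ \emph{unramified} of degree $\#\eta$, this cannot happen (for $\#\eta>1$), even though $\#\eta$ may be a $p$-power. That is exactly the content the paper intends, and your warning that ``never wild'' cannot mean ``trivial Swan conductor'' is correct.

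There is, however, a genuine gap in your treatment of the second assertion. The claim ``the representations $\rho$ of dimension prime to $p$ are \emph{precisely} given as $\rho=\rho(X_\eta,\chi)$ for characters $\eta$ of $U/U^1$'' quantifies over \emph{all} Heisenberg representations of $G_F$ of dimension prime to $p$, not only over those already known to be $U$-isotropic; this is how the corollary is later used (the proof of Lemma \ref{Lemma dimension equivalent}, (1)$\Rightarrow$(2), cites it to conclude that every Heisenberg representation of dimension prime to $p$ \emph{is} $U$-isotropic). Your argument starts from $\rho=\rho(X_\eta,\chi)$ and shows that $\#\eta$ prime to $p$ forces $\eta$ to factor through $U/U^1$ — correct, but this only classifies the prime-to-$p$ representations \emph{within} the $U$-isotropic class. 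The missing step, which is the actual content of the paper's proof, is: for an arbitrary Heisenberg $\rho=\rho(X,\chi_K)$ with $\dim(\rho)=\sqrt{[F^\times:\mathrm{Rad}(X)]}$ prime to $p$, the image of the pro-$p$ group $U_F^1$ in the finite quotient $F^\times/\mathrm{Rad}(X)$ (of order prime to $p$) is trivial, so $U_F^1\subseteq\mathrm{Rad}(X)$; hence $X|_{U\wedge U}$ factors through the alternating square of the \emph{cyclic} group $U/U^1$, which is trivial, so $X$ is automatically $U$-isotropic. Without this step the word ``precisely'' is not justified.
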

\begin{proof}
In Lemma \ref{Lemma U-equivalent}, we see that the U-isotropic Heisenberg representation $\rho=\rho(X_\eta,\chi)$ is
induced from the character $\chi$ of the {\bf maximal unramified} subextension $E/F$ in $K/F$.
Therefore, $K/F$ containing $E/F$ cannot be wild extension. Hence, U-isotropic Heisenberg representations  are always tame.

From Lemma \ref{Lemma U-isotropic},
the dimension of $\rho=\rho(X_\eta,\chi)$ is 
the {\bf order} of the character $\eta:U/U^i\to\bbC^\times$ ($i\ge 1$). Because $\eta$ is a character of $U/U^i$,
the order of $\eta$ must be a divisor of $|U/U^i|=q_F^{i-1}(q_F-1).$

 We know that $|U/U^1|=q_F-1$ is prime to $p$. 
 We also know that the dimension $\rm{dim}(\rho)=\sqrt{[K:F]}=\sqrt{[F^\times:\rm{Rad}(X)]}$. 
 If $\rm{dim}(\rho)$ is prime to $p$, then 
 $K/F$ is tame, and $U_F^1\subseteq \rm{Rad}(X)$. However, $U/U^1$ is cyclic; hence $X$ is U-isotropic.
\end{proof}

\begin{rem}\label{Remark 5.1.3}
{\bf (1).}
Let $V_F$ be the wild ramification subgroup of $G_F$.
 We can show that $\rho|_{V_F}$ is irreducible if and only if $Z_\rho=G_K\subset G_F$
 corresponds to an abelian extension $K/F$ which is totally ramified, and wildly 
 ramified\footnote{Group theoretically, if $\rho|_{V_F}=\rm{Ind}_{H}^{G_F}(\chi_H)|_{V_F}$ is irreducible, then from 
 Section 7.4 of \cite{JP},
we can say $G_F=H\cdot V_F$. Here, $H=G_L$, where $L$ is a certain extension of $F$, and $V_F=G_{F_{mt}}$,
where $F_{mt}/F$ is the maximal 
tame extension of $F$. Therefore, $G_F=H\cdot V_F$ is equivalent to $F=L\cap F_{mt}$ that means
the extension $L/F$ must be totally 
ramified and wildly ramified, and $[G_F:H]=[L:F]=|V_F|$.
We know that the wild ramification subgroup $V_F$ is a pro-p-group (cf. \cite{FV}, p. 106). Then, 
 $\rm{dim}(\rho)$ is a power of $p$.} (cf. \cite{Z2}, p. 305). If $N:=N_{K/F}(K^\times)$ is the subgroup
 of norms, then this means that $N\cdot U_{F}^{1}=F^\times$, in other words,
 $$F^\times/N=N\cdot U_{F}^{1}/N=U_{F}^{1}/N\cap U_{F}^{1},$$
 where $N$ can also be considered as the radical of $X_\rho$. Therefore, we can consider the alternating
 character $X_\rho$ on the principal
 units $U_{F}^{1}\subset F^\times$. Then, 
 $$\rm{dim}(\rho)=\sqrt{[F^\times:N]}=\sqrt{[U_F^1: N\cap U_F^1]},$$
 is a power of $p$, because $U_F^1$ is a pro-p-group.

 Here we observe: If $\rho=\rho(X,\chi_K)$ with $\rho|_{V_F}$ stays irreducible, then
 $\rm{dim}(\rho)=p^n$, $n\ge 1$ and  
 $K/F$ is a totally and {\bf wildly} ramified. However, there is 
 a {\bf big} class of Heisenberg representations $\rho$ such that $\rm{dim}(\rho)=p^n$ is a $p$-power, but which are not 
 wild representations (see the Definition \ref{Definition U-isotropic} of U-isotropic).\\
 {\bf (2).}
Let $\rho=\rho(X,\chi_K)$ be a Heisenberg representation of the absolute Galois group $G_F$ of dimension $d>1$ which is prime 
to $p$. Then, from Lemma \ref{Lemma dimension equivalent}, we have  $d|q_F-1$. For this representation $\rho$, here 
$K/F$ must be tame if $\rm{Rad}(X)=\cN_{K/F}$ (cf. \cite{FV}, p. 115).
\end{rem}

\begin{lem}\label{Lemma dimension equivalent}
 Let $\rho=\rho(X,\chi_K)$ be a Heisenberg representation of 
 the absolute Galois group $G_F$ of a non-archimedean local field 
 $F/\bbQ_p$. Then, the following are equivalent:
 \begin{enumerate}
  \item $\rm{dim}(\rho)$ is prime to $p$.
  \item $\rm{dim}(\rho)$ is a divisor of $q_F-1$.
  \item The alternating character $X$ is U-isotropic and $X=X_\eta$ for a character $\eta$ of 
  $U_F/U_F^1$, i.e., $a(\eta)=1$.
  \item The abelian extension $K/F$, which corresponds to $\rm{Rad}(X)$ is tamely ramified.
 \end{enumerate}
\end{lem}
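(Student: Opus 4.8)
\textbf{Proof strategy for Lemma \ref{Lemma dimension equivalent}.}

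The plan is to prove the four equivalences by a cycle of implications, drawing almost entirely on the machinery already assembled: the dimension formula $\dim(\rho)=\sqrt{[F^\times:\rm{Rad}(X)]}$ from \eqref{eqn dimension formula}, the structure of $U$-isotropic characters from Lemma \ref{Lemma U-isotropic}, and the characterization in Corollary \ref{Corollary U-isotropic}. First I would establish $(1)\iff(2)$: one direction is immediate since $q_F-1$ is prime to $p$, so any divisor of $q_F-1$ is prime to $p$. For the converse, suppose $\dim(\rho)=n$ is prime to $p$. Since $F^\times/\rm{Rad}(X)$ has order $n^2$ prime to $p$, the corresponding abelian extension $K/F$ has degree prime to $p$, hence is tamely ramified; but then $U_F^1$ (being a pro-$p$ group) is contained in every norm subgroup of a tame abelian extension, so $U_F^1\subseteq\cN_{K/F}=\rm{Rad}(X)$. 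Thus $X$ factors through $F^\times/U_F^1\cdot\rm{Rad}(X)$, and since $U_F/U_F^1$ is cyclic of order $q_F-1$, the restriction $X|_{U\times U}$ must be trivial (an alternating character on a cyclic group vanishes), so $X$ is $U$-isotropic with $X=X_\eta$ for $\eta$ a character of $U_F/U_F^1$. By Lemma \ref{Lemma U-isotropic}, $n=\dim(\rho)=\#\eta$ divides $|U_F/U_F^1|=q_F-1$, giving $(2)$ and also $(3)$.

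Next I would close the loop with $(3)\then(4)$ and $(4)\then(1)$. For $(3)\then(4)$: if $X=X_\eta$ with $a(\eta)=1$, then by Lemma \ref{Lemma U-isotropic} we have $\rm{Rad}(X_\eta)=\langle\pi_F^{\#\eta}\rangle\times\rm{Ker}(\eta)$ with $\rm{Ker}(\eta)\supseteq U_F^1$, so $U_F^1\subseteq\rm{Rad}(X)=\cN_{K/F}$; the extension $K/F$ then has ramification controlled only by $U_F/U_F^1$, whence $e_{K/F}\mid(q_F-1)$ is prime to $p$ and $K/F$ is tame. For $(4)\then(1)$: if $K/F$ is tamely ramified then $[K:F]$ is prime to $p$ (a tame abelian extension over a $p$-adic field has degree prime to $p$, as its ramification index divides $q_F-1$ and its residue degree is arbitrary but still the whole degree is prime to $p$ — more carefully, $[K:F]=e_{K/F}f_{K/F}$ with $p\nmid e_{K/F}$, and since $K/F$ is tame by hypothesis there is no wild inertia, so $p\nmid[K:F]$), hence $\dim(\rho)=\sqrt{[K:F]}$ is prime to $p$. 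This completes the cycle $(1)\then(2)\then(3)\then(4)\then(1)$, with $(2),(3)$ falling out along the way.

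The main obstacle I anticipate is the careful bookkeeping in showing that $p \nmid [K:F]$ is equivalent to tameness for these specific bicyclic extensions, and in particular verifying that a $U$-isotropic alternating character with $\dim$ prime to $p$ really does have conductor-one associated $\eta$ — this hinges on the fact that $\rm{Ker}(\eta)$ must contain $U_F^1$ precisely when $\#\eta$ is prime to $p$, which uses that $U_F^1$ is pro-$p$ while any finite-order character of $U_F$ of order prime to $p$ kills the pro-$p$ part. The other steps are essentially immediate given Lemma \ref{Lemma U-isotropic} and Corollary \ref{Corollary U-isotropic}; indeed Corollary \ref{Corollary U-isotropic} already records ``the representations $\rho$ of dimension prime to $p$ are precisely $\rho=\rho(X_\eta,\chi)$ for characters $\eta$ of $U/U^1$,'' so a cleaner route to $(1)\iff(3)$ is simply to cite that corollary and then add $(2)$ and $(4)$ as easy consequences via the dimension formula and the tameness criterion.
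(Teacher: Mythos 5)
Your handling of $(1)\Rightarrow(2),(3)$ and of $(3)\Rightarrow(4)$ is sound and close in spirit to the paper's: the paper routes $(1)\Rightarrow(2)\Rightarrow(3)$ through Corollary \ref{Corollary U-isotropic}, while you re-derive the key point directly ($U_F^1$, being pro-$p$, has trivial image in the prime-to-$p$ group $F^\times/\mathrm{Rad}(X)$, and an alternating character is trivial on the cyclic image of $U_F$); either is acceptable.

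The gap is in $(4)\Rightarrow(1)$. Your claim that a tamely ramified abelian extension of a $p$-adic field has degree prime to $p$ is false: the unramified extension of degree $p$ is tamely ramified. Tameness constrains only the ramification index $e_{K/F}$, not the residue degree $f_{K/F}$, so ``no wild inertia, hence $p\nmid[K:F]$'' does not follow. To close this implication you must use the specific structure $\mathrm{Gal}(K/F)\cong F^\times/\mathrm{Rad}(X)\cong\bbZ_n\times\bbZ_n$ with $n=\dim(\rho)$. For instance: tameness gives $U_F^1\subseteq\cN_{K/F}=\mathrm{Rad}(X)$, so the inertia subgroup $U_F\cN_{K/F}/\cN_{K/F}\cong U_F/(U_F\cap\cN_{K/F})$ is a quotient of the cyclic group $U_F/U_F^1$ of order $q_F-1$, hence cyclic of order $e_{K/F}\mid q_F-1$; the quotient of $\mathrm{Gal}(K/F)$ by inertia is cyclic of order $f_{K/F}$ (it is the Galois group of the residue extension). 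Since $\bbZ_n\times\bbZ_n$ has exponent $n$, any cyclic subgroup and any cyclic quotient have order dividing $n$, and $e_{K/F}f_{K/F}=n^2$ then forces $e_{K/F}=f_{K/F}=n$, whence $n\mid q_F-1$ and $(1)$ follows. This is in substance what the paper does: it shows that $X$ descends to an alternating character of $F^\times/({F^\times}^{q_F-1}U_F^1)\cong\bbZ_{q_F-1}\times\bbZ_{q_F-1}$, so that $F^\times/\mathrm{Rad}(X)$ is a quotient of that group and $\dim(\rho)$ divides $q_F-1$.
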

\begin{proof}
{\bf (1) implies (2):}\\
From Corollary \ref{Corollary U-isotropic}, we know that all Heisenberg representations of dimensions prime to $p$, are 
 U-isotropic representations of the form $\rho=\rho(X_\eta,\chi)$, where $\eta:U_F/U_F^1\to\bbC^\times$, and the dimensions 
 $\rm{dim}(\rho)=\#\eta$.
 Thus, if $\rm{dim}(\rho)$ is prime to $p$, then $\rm{dim}(\rho)=\#\eta$ is a divisor of $q_F-1$. \\
{\bf (2) implies (3)}:\\
 If $\rm{dim}(\rho)$
 is a divisor of $q_F-1$, then $gcd(p,\rm{dim}(\rho))=1$. From Corollary \ref{Corollary U-isotropic}, the alternating 
 character $X$ is U-isotropic and $X=X_\eta$ for a character $\eta\in\widehat{U_F/U_F^1}$.\\
 {\bf (3) implies (4)}:\\ 
 We know that 
 $$\rm{dim}(\rho)=\sqrt{[F^\times:\rm{Rad}(X_\eta)]}=\sqrt{[F^\times:\cN_{K/F}]}=\#\eta.$$
 Because $K/F$ is abelian, we have $\rm{dim}(\rho)^2=[K:F]$. Again, because $\#\eta=\rm{dim}(\rho)$ is a divisor
 of $q_F-1$, $K/F$ is tamely ramified.\\
 {\bf (4) implies (1):}\\
 If $K/F$ is tamely ramified, then we can write $U_F^1\subset \cN_{K/F}\subset F^\times$, and hence 
 $F^\times/\cN_{K/F}$ is a quotient group of $F^\times/U_F^1$. Therefore,  
if $K/F$ is the abelian tamely ramified extension, and $\cN_{K/F}=\rm{Rad}(X)$; then $X$ must be an alternating character
of $F^\times/U_F^1$.  
We also know that  
$F^\times=<\pi_F>\times<\zeta>\times U_{F}^{1}$, where $\zeta$ is a root of unity of order $q_F-1$. This implies 
$F^\times/U_{F}^{1}=<\pi_F>\times<\zeta>$.
Therefore, each element $x\in
F^\times/U_F^1$ can be written as $x= \pi_{F}^a\cdot \zeta^b$, where $a,b\in\bbZ$. 
We now take $x_1=\pi_{F}^{a_1}\zeta^{b_1}, x_2=\pi_{F}^{a_2}\zeta^{b_2}\in F^\times/U_{F}^{1}$, where $a_i,b_i\in\bbZ(i=1,2)$, then 
\begin{align*}
 X(x_1,x_2)
 &= X(\pi_{F}^{a_1}\zeta^{b_1},\; \pi_{F}^{a_2}\zeta^{b_2})\\
 &= X(\pi_{F}^{a_1},\zeta^{b_2})\cdot X(\zeta^{b_1},\pi_{F}^{a_2})\\
 &=\chi_\rho([\pi_{F}^{a_1},\zeta^{b_2}])\cdot\chi_\rho([\zeta^{b_1},\pi_{F}^{a_2}]).
\end{align*}
However, this implies  $X^{q_F-1}\equiv 1$ because $\zeta^{q_F-1}=1$,
which means that $X$ is actually an alternating character on  $F^\times/({F^\times}^{(q_F-1)} U_F^1),$ and therefore
$G_F/G_K$ is actually a quotient of $F^\times/({F^\times}^{(q_F-1)} U_F^1).$ 
We also know that $U_F^1$ is a pro-p-group and therefore 
$$U_F^1=(U_F^1)^{q_F-1}\subset F^\times.$$
Thus, the cardinality of 
$F^\times/({F^\times}^{(q_F-1)} U_F^1)$ is $(q_F-1)^2$ because 
$$F^\times/({F^\times}^{(q_F-1)} U_F^1)\cong \bbZ/(q_F-1)\bbZ\times<\zeta>\cong \bbZ_{q_F-1}\times\bbZ_{q_F-1}.$$
Therefore, $\rm{dim}(\rho)$ divides $q_F-1.$ Hence $\rm{dim}(\rho)$ is prime to $p$.

\end{proof}

\begin{rem}\label{Remark 5.1.14} 
Let $K_\eta|F$ be the abelian bicyclic 
extension that corresponds to $\rm{Rad}(X_\eta)$. Then, we can write

$$ \cN_{K_\eta/F}= \rm{Rad}(X_\eta),\qquad \rm{Gal}(K_\eta/F)\cong F^\times/\rm{Rad}(X_\eta),$$
and $f_{K_\eta|F}= e_{K_\eta|F}=\#\eta$. And the maximal unramified subextension 
$E/F\subset K_\eta/F$ corresponds to a maximal isotropic subgroup, hence
$$ \rho(X_\eta,\chi) = \rm{Ind}_{E/F}(\chi_E),\quad\textrm{for}\; \chi_E\circ N_{K_\eta/E} =\chi.$$
We recall here that $\chi:K_\eta^\times/I_FK_\eta^\times\rightarrow\bbC^\times$ is a character such that
(cf. Theorem \ref{Theorem 5.1.1}(3))
$$ \chi|_{(K_\eta^\times)_F} \leftrightarrow X_\eta,\quad\textrm{with respect to}\; 
(K_\eta^\times)_F/I_FK_\eta^\times\cong F^\times/\rm{Rad}(X_\eta)\wedge F^\times/\rm{Rad}(X_\eta).$$
In particular, we see that $(K_\eta^\times)_F/I_FK_\eta^\times$ is cyclic of 
order $\#\eta$, and $\chi|_{(K_\eta^\times)_F}$ must be a faithful character of that cyclic group.
\end{rem}

\section{{\bf Computation of the Artin and Swan conductors, and the dimension theorem}}

The Artin and Swan conductors are the main ingredients of the Deligne-Henniart's twisting 
formula (cf. Theorem \ref{Deligne-Henniart's general formula}), and Deligne's formula (\ref{eqn 1.5}). Therefore, for 
U-isotropic Heisenberg representations, we need to compute them explicitly. In this section, we perform all the necessary 
computations.

In the following theorem, we provide a general dimension formula for a Heisenberg representation.
\begin{thm}[{\bf Dimension}]\label{Dimension Theorem}
Let $F/\bbQ_p$ be a local field, and $G_F$ be the absolute Galois group of $F$. If $\rho$ is a Heisenberg representation of 
$G_F$, then $\rm{dim}(\rho)=p^n\cdot d'$, where $n (\ge 0)$ is an integer, and the prime to $p$ factor $d'$ must divide $q_F-1$.
\end{thm}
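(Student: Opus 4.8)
The plan is to reduce the statement to the structure of the radical $\mathrm{Rad}(X_\rho)$ of the alternating character $X_\rho\in\mathrm{Alt}(F^\times)$ attached to $\rho$ by Theorem~\ref{Theorem 5.1.1}. By the dimension formula \eqref{eqn dimension formula}, $\dim(\rho)=\sqrt{[F^\times:N]}$ where $N=\mathrm{Rad}(X_\rho)$, so it suffices to analyze the finite abelian group $Q:=F^\times/\mathrm{Rad}(X_\rho)$ on which $X_\rho$ induces a \emph{nondegenerate} alternating character. In particular $Q$ carries a nondegenerate alternating pairing, so it is necessarily of the form $Q\cong \Lambda\times\Lambda$ for some finite abelian group $\Lambda$, and $\dim(\rho)=|\Lambda|$. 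So the whole problem becomes: show that $|\Lambda|=p^n d'$ with $d'\mid q_F-1$, i.e. that the prime-to-$p$ part of $\Lambda$ is cyclic of order dividing $q_F-1$.

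First I would use the standard structure $F^\times\cong \pi_F^{\bbZ}\times\mu_{q_F-1}\times U_F^1$, where $\mu_{q_F-1}=\langle\zeta\rangle$ is the group of $(q_F-1)$-st roots of unity and $U_F^1$ is a pro-$p$ group. Write $d'$ for the prime-to-$p$ part of $\dim(\rho)$; then $d'$ is the order of the maximal prime-to-$p$ quotient of $\Lambda$, which is a subquotient of the prime-to-$p$ part of $F^\times$. Since $U_F^1$ is pro-$p$, the prime-to-$p$ part of $F^\times$ is $\pi_F^{\bbZ[1/p]'}\times\mu_{q_F-1}$ up to profinite completion — more precisely, modulo any open subgroup the prime-to-$p$ part of $F^\times/U_F^1\cong \pi_F^{\bbZ}\times\langle\zeta\rangle$ governs everything. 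The alternating pairing on $Q$, restricted to its prime-to-$p$ part $Q'$, must be nondegenerate on $Q'$; but $Q'$ is a quotient of (a finite piece of) $\bbZ\times\langle\zeta\rangle$, i.e. generated by the images of $\pi_F$ and $\zeta$. Hence $Q'$ is generated by two elements, and nondegeneracy of the pairing forces $Q'\cong \bbZ/d'\times\bbZ/d'$ with the $\pi_F$-direction and the $\zeta$-direction each contributing a cyclic factor of order $d'$. Because the $\zeta$-direction lives inside $\langle\zeta\rangle$ of order $q_F-1$, we get $d'\mid q_F-1$.

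The step I expect to be the main obstacle is making the ``nondegenerate alternating pairing on a finite abelian group forces a square structure $\Lambda\times\Lambda$'' argument interact cleanly with the $p$-part/prime-to-$p$ decomposition, and in particular arguing that the prime-to-$p$ constraint ``$d'\mid q_F-1$'' really does come out, rather than just ``$d'$ is prime to $p$''. The key point to nail down is that any element of $F^\times$ of prime-to-$p$ order maps into the $\langle\zeta\rangle=\mu_{q_F-1}$ factor (since $\pi_F$ has infinite order and $U_F^1$ is pro-$p$), so in the \emph{finite} quotient $Q$ the image of the prime-to-$p$ torsion, which is where the $\zeta$-cyclic factor of $Q'$ sits, has order dividing $q_F-1$; combined with nondegeneracy this pins $d'\mid q_F-1$. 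Once that is in place the conclusion $\dim(\rho)=p^n d'$ with $d'\mid q_F-1$ is immediate. I would also remark that this recovers Lemma~\ref{Lemma dimension equivalent} and Corollary~\ref{Corollary U-isotropic} in the extreme case $n=0$.
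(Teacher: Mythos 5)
Your proposal is correct, but it takes a genuinely different route from the paper. The paper argues on the Galois side: since $\rho$ kills $[[G_F,G_F],G_F]$, it factors through the two-step nilpotent quotient $G$, which splits as a product of its Sylow subgroups $G=G_p\times G_{p'}$; hence $\rho=\rho_p\otimes\rho_{p'}$, the factor $\rho_p$ has $p$-power dimension because $G_p$ is pro-$p$, and the prime-to-$p$ factor is handled by quoting Lemma \ref{Lemma dimension equivalent} (which in turn rests on the $U$-isotropic classification of Lemma \ref{Lemma U-isotropic} and Corollary \ref{Corollary U-isotropic}). You instead work on the abelian side via the dimension formula $\dim(\rho)=\sqrt{[F^\times:\mathrm{Rad}(X_\rho)]}$ and the symplectic structure of $Q=F^\times/\mathrm{Rad}(X_\rho)$, splitting $Q$ into its $p$-part and prime-to-$p$ part (these are automatically orthogonal, since elements of coprime order pair trivially) and using $F^\times\cong\pi_F^{\bbZ}\times\mu_{q_F-1}\times U_F^1$ with $U_F^1$ pro-$p$. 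The paper's route has the advantage of producing the tensor factorization $\rho=\rho_p\otimes\rho_{p'}$, which is reused later (e.g.\ in Theorem \ref{Invariant formula}); yours is more self-contained for the divisibility statement and in effect re-proves the relevant implication of Lemma \ref{Lemma dimension equivalent} directly. The one step you rightly flag as delicate can be closed as follows: $Q_{p'}$ is a two-generated symplectic group, hence $\cong\bbZ/d'\times\bbZ/d'$; the image $C$ of $\mu_{q_F-1}$ in $Q_{p'}$ is cyclic of order dividing $q_F-1$, and $Q_{p'}/C$ is cyclic (being a quotient of $\langle\pi_F\rangle\cong\bbZ$), so $|C|\mid d'$ (cyclic subgroup) and $d'^2/|C|\mid d'$ (cyclic quotient has order dividing the exponent), forcing $|C|=d'$ and therefore $d'\mid q_F-1$.
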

\begin{proof}
 By the definition of Heisenberg representation $\rho$, we have the following relation 
 $$[[G_F,G_F],G_F]\subseteq\rm{Ker}(\rho).$$
 Then, we can consider $\rho$ as a representation of $G:=G_F/[[G_F,G_F],G_F]$. Because 
 $[x,g]\in [[G_F,G_F],G_F]$ for all $x\in [G_F,G_F]$, and $g\in G_F$, we have $[G,G]=[G_F,G_F]/[[G_F,G_F],G_F]\subseteq Z(G)$,
 hence, $G$ is a two-step nilpotent group.
 
 We know that each nilpotent group is isomorphic to the direct product of its Sylow subgroups. Therefore, we can write 
 $$G=G_p\times G_{p'},$$
 where $G_p$ is the Sylow $p$-subgroup, and $G_{p'}$ is the direct product of all other Sylow subgroups.
 Therefore, each irreducible
 representation $\rho$ has the form $\rho=\rho_{p}\otimes\rho_{p'}$, where $\rho_{p}$ and $\rho_{p'}$ are irreducible representations of 
 $G_p$ and $G_{p'}$, respectively. 
 
 We also know that finite $p$-groups are nilpotent groups, and the direct product of a finite number of 
 nilpotent groups is again a nilpotent group.
 Therefore, $G_p$ and $G_{p'}$ are both two-step nilpotent groups because $G$ is a two-step nilpotent group.
 Therefore, the representations
 $\rho_p$ and $\rho_{p'}$ are both Heisenberg representations of $G_p$ and $G_{p'}$, respectively.
 
 To prove our assertion, we must show that $\rm{dim}(\rho_p)$ can be an arbitrary power of $p$, whereas 
 $\rm{dim}(\rho_{p'})$ must divide $q_F-1$. Because
 $\rho_p$ is an {\bf irreducible} representation of $p$-group $G_p$, the dimension of $\rho_p$ is some $p$-power.
 
 Again, from the construction of $\rho_{p'}$, we can say that $\rm{dim}(\rho_{p'})$ is {\bf prime} to $p$. 
 From Lemma \ref{Lemma dimension equivalent}, $\rm{dim}(\rho_{p'})$ is a divisor of $q_F-1$.

This completes the proof.

\end{proof}

Using Equation (\ref{eqn 5.1.24}), in our Heisenberg setting, we have the following proposition.

\begin{prop}\label{Proposition 5.1.20}
 Let $\rho=\rho(Z,\chi_\rho)=\rho(X,\chi_K)$ be a Heisenberg representation of the absolute Galois group $G_F$ of a field 
 $F/\bbQ_p$ of dimension $m$. Let $E/F$ be any subextension in $K/F$ corresponding to a maximal isotropic subgroup for $X$. 
 Then, 
 $$a_F(\rho)=a_F(\rm{Ind}_{E/F}(\chi_E)),\qquad m\cdot a_F(\rho)=a_F(\rm{Ind}_{K/F}(\chi_K)).$$
 As a consequence, we have 
 $$a(\chi_K)=e_{K/E}\cdot a(\chi_E)-d_{K/E}.$$
 In particular, $a(\chi_K)=a(\chi_E)$ if $K/E$ is unramified.
\end{prop}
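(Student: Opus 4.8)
The plan is to reduce everything to the conductor formula for induced representations, equation (\ref{eqn 5.1.24}), applied in two different ways. Recall from (\ref{eqn 5.1.5}) and Remark \ref{Remark 5.1.14} that the Heisenberg representation $\rho=\rho(X,\chi_K)$ of dimension $m$ can be written in two ways: first, as $\rho=\operatorname{Ind}_{E/F}(\chi_E)$ where $E/F$ corresponds to a maximal isotropic subgroup for $X$ (so $[E:F]=m$, since $\dim(\rho)=\sqrt{[K:F]}=m$ and $[K:E]=[E:F]=m$ by the bicyclic structure), and $\chi_E$ is any character of $E^\times$ with $\chi_E\circ N_{K/E}=\chi_K$; second, from $m\cdot\rho=\operatorname{Ind}_{K/F}(\chi_K)$. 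The first identity $a_F(\rho)=a_F(\operatorname{Ind}_{E/F}(\chi_E))$ is then immediate because $\rho=\operatorname{Ind}_{E/F}(\chi_E)$ literally, and by Lemma \ref{Lemma 5.1.4}(ii) the Artin conductor does not depend on the choice of solution $\chi_E$ (all solutions are $\operatorname{Gal}(E/F)$-conjugates, which have equal conductor).

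For the second identity, I would apply additivity of the Artin conductor to $m\cdot\rho=\operatorname{Ind}_{K/F}(\chi_K)$, giving $m\cdot a_F(\rho)=a_F(\operatorname{Ind}_{K/F}(\chi_K))$ directly. Alternatively — and this is the route that also yields the consequence — one computes $a_F(\operatorname{Ind}_{K/F}(\chi_K))$ by factoring the induction through $E$: since induction is transitive, $\operatorname{Ind}_{K/F}(\chi_K)=\operatorname{Ind}_{E/F}\bigl(\operatorname{Ind}_{K/E}(\chi_K)\bigr)$, and applying (\ref{eqn 5.1.24}) to the outer induction over $E/F$ (with $\operatorname{Ind}_{K/E}(\chi_K)$ a representation of $G_E$ of dimension $[K:E]=m$) gives
\begin{equation*}
a_F(\operatorname{Ind}_{K/F}(\chi_K))=f_{E/F}\bigl(d_{E/F}\cdot m+a_E(\operatorname{Ind}_{K/E}(\chi_K))\bigr).
\end{equation*}
Here $E/F$ is unramified (it is the maximal unramified subextension, by Lemma \ref{Lemma U-equivalent} in the $U$-isotropic case, or more generally one chooses $E$ so), so $d_{E/F}=0$ and $f_{E/F}=[E:F]=m$, whence $a_F(\operatorname{Ind}_{K/F}(\chi_K))=m\cdot a_E(\operatorname{Ind}_{K/E}(\chi_K))$. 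But $a_E(\operatorname{Ind}_{K/E}(\chi_K))=a_F(\operatorname{Ind}_{E/F}(\chi_E))$ in a compatible normalization — actually the cleanest is just to note $m\cdot a_F(\rho)=m\cdot a_F(\operatorname{Ind}_{E/F}(\chi_E))$ and separately $a_F(\operatorname{Ind}_{K/F}(\chi_K))=m\cdot a_F(\operatorname{Ind}_{E/F}(\chi_E))$ by comparing the two applications of (\ref{eqn 5.1.24}).

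For the consequence, apply (\ref{eqn 5.1.24}) to the single-step induction $\operatorname{Ind}_{K/E}(\chi_K)$ as a representation of $G_E$, with $\rho_E=\chi_K$ one-dimensional: this gives $a_E(\operatorname{Ind}_{K/E}(\chi_K))=f_{K/E}(d_{K/E}+a(\chi_K))$. On the other hand, comparing with $a_F(\rho)=a_F(\operatorname{Ind}_{E/F}(\chi_E))=f_{E/F}(d_{E/F}\cdot 1+a(\chi_E))=a(\chi_E)$ (using $E/F$ unramified) and chaining through the relation $a_F(\operatorname{Ind}_{K/F}(\chi_K))=m\cdot a_F(\rho)$ together with transitivity $a_F(\operatorname{Ind}_{K/F}(\chi_K))=f_{E/F}\bigl(d_{E/F}m+a_E(\operatorname{Ind}_{K/E}(\chi_K))\bigr)=m\cdot a_E(\operatorname{Ind}_{K/E}(\chi_K))$, one gets $a_E(\operatorname{Ind}_{K/E}(\chi_K))=a_F(\rho)=a(\chi_E)$. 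Now reinterpret $\operatorname{Ind}_{K/E}(\chi_K)$ via (\ref{eqn 5.1.24}): it equals $\frac{1}{[K:E]}\cdot[K:E]\cdot(\text{stuff})$ — more carefully, since $\operatorname{Ind}_{K/E}(\chi_K)$ need not be $\chi_E$ but has the same conductor as $[K:E]$ copies would... the cleaner statement is to use $m\cdot a_F(\rho)=a_F(\operatorname{Ind}_{K/F}(\chi_K))$ and expand the right side by transitivity through $K/E/F$ using (\ref{eqn 5.1.24}) on $K/E$ with $\rho_E=\chi_K$, landing on $m\cdot a(\chi_E)=m\cdot a_F(\rho)=f_{K/F}(d_{K/F}+a(\chi_K))$ and also $a_F(\operatorname{Ind}_{K/F}(\chi_K))$ through $E$. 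Solving yields $a(\chi_K)=e_{K/E}\cdot a(\chi_E)-d_{K/E}$, and when $K/E$ is unramified $e_{K/E}=1$, $d_{K/E}=0$, so $a(\chi_K)=a(\chi_E)$.

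The main obstacle I expect is purely bookkeeping: keeping the residue-degree factors $f_{E/F}$, $f_{K/E}$ and different exponents $d_{E/F}$, $d_{K/E}$ straight across the two factorizations of the induction, and verifying that the two independent routes to $a_F(\operatorname{Ind}_{K/F}(\chi_K))$ — namely via additivity ($=m\cdot a_F(\rho)$) and via transitivity through $E$ — are consistent; this consistency is exactly the content $m\cdot a_F(\rho)=a_F(\operatorname{Ind}_{K/F}(\chi_K))$, so there is nothing deep, only care with the conductor–different identity $a(\chi_K)=e_{K/E}a(\chi_E)-d_{K/E}$, which is the standard behavior of conductors under the norm along a cyclic (here even totally ramified) extension $K/E$.
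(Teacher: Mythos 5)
Your proposal follows essentially the same route as the paper: the first identity is immediate from $\rho=\mathrm{Ind}_{E/F}(\chi_E)$, the second from additivity of the Artin conductor applied to $m\cdot\rho=\mathrm{Ind}_{K/F}(\chi_K)$, and the consequence from expanding both sides with the conductor formula (\ref{eqn 5.1.24}) and the transitivity of the different, $d_{K/F}=d_{K/E}+e_{K/E}\cdot d_{E/F}$; your final ``cleaner'' computation is exactly the paper's. Two slips in your intermediate detour should be removed, though. First, the proposition allows $E/F$ to be \emph{any} maximal isotropic subextension, not necessarily the maximal unramified one (e.g.\ in the tame case a maximal cyclic totally ramified $E/F$ also works), so you may not assume $d_{E/F}=0$; the paper's derivation never does, and the cancellation of the $d_{E/F}$ terms happens automatically once you substitute the transitivity relation. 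Second, even when $E/F$ is unramified of degree $m$ one has $f_{E/F}=m$, so $a_F(\rho)=f_{E/F}(d_{E/F}+a(\chi_E))=m\cdot a(\chi_E)$, not $a(\chi_E)$ as you wrote; compare Lemma \ref{Lemma general conductor}, where $a_F(\rho)=m\cdot a(\eta)$ with $a_{E_1}(\chi_{E_1})=a(\eta)$. Neither error survives into your final general computation, which is correct.
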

\begin{proof}
 We know that $\rho=\rm{Ind}_{E/F}(\chi_E)$ and $m \cdot \rho=\rm{Ind}_{K/F}(\chi_K)$.
 By the definition of the Artin conductor, we can write 
 $$a_F(\rm{dim}(\rho)\cdot \rho)=\rm{dim}(\rho)\cdot a_F(\rho)=m\cdot a_F(\rm{Ind}_{E/F}(\chi_E)).$$
 Because $K/E/F$ is a tower of Galois extensions with $[K:E]=m=e_{K/E}f_{K/E}$, we have the following 
 transitivity relation of 
 different (cf. \cite{JPS}, p. 51,
 Proposition 8)
 $$\mathcal{D}_{K/F}=\mathcal{D}_{K/E}\cdot \mathcal{D}_{E/F}.$$
 From the definition of different of a Galois extension, and taking $K$-valuation we obtain
 \begin{equation}\label{eqn discriminant relation}
  d_{K/F}=d_{K/E}+e_{K/E}\cdot d_{E/F}.
 \end{equation}
 Now, using Equation (\ref{eqn 5.1.24}), we have
 \begin{equation}\label{eqn 44}
  m\cdot a_F(\rm{Ind}_{E/F}(\chi_E))=m\cdot f_{E/F}\left(d_{E/F}+a(\chi_E)\right)=m\cdot f_{E/F}\cdot d_{E/F}+e_{K/E}\cdot f_{K/F}
  \cdot a(\chi_E),
 \end{equation}
and 
\begin{equation}\label{eqn 45}
 a_F(\rm{Ind}_{K/F}(\chi_K))=f_{K/F}\cdot\left(d_{K/F}+a(\chi_K)\right)=f_{K/F}\cdot d_{K/F}+f_{K/F}\cdot a(\chi_K).
\end{equation}
Using Equation (\ref{eqn discriminant relation}), from Equations (\ref{eqn 44}), (\ref{eqn 45}), we have 
$$a(\chi_K)=e_{K/E}\cdot a(\chi_E)-d_{K/E}.$$
When $K/E$ is unramified, i.e., $e_{K/E}=1$ and $d_{K/E}=0$, hence $a(\chi_K)=a(\chi_E)$.

\end{proof}

\begin{dfn}[{\bf Jump for alternating character}]

For each $X\in\widehat{FF^\times}$, we define jump as follows:
\begin{equation}
 j(X):=\begin{cases}
        0 & \text{when $X$ is trivial}\\
        \rm{max}\{i\;|\; X|_{UU^i}\not\equiv 1\} & \text{when $X$ is nontrivial},
       \end{cases}
\end{equation}
where $UU^i\subseteq FF^\times$ is a subgroup that under (\ref{eqn 5.1.2}) corresponds 
$$G_F^i\cap[G_F,G_F]/G_F^i\cap[[G_F,G_F],G_F]\subseteq[G_F,G_F]/[[G_F,G_F],G_F].$$
\end{dfn}

\begin{rem}

Let $\rho=\rho(X_\rho,\chi_K)$ be a {\bf minimal conductor} (i.e., a representation with the smallest Artin conductor) 
Heisenberg representation for $X_\rho$ of the absolute Galois group $G_F$. 
From Theorem 3 on p. 125 of \cite{Z5}, we 
have 
\begin{equation}\label{eqn 5.1.26}
 \rm{sw}_F(\rho)=\rm{dim}(\rho)\cdot j(X_\rho)=\sqrt{[F^\times:\rm{Rad}(X_\rho)]}\cdot j(X_\rho).
\end{equation}
Moreover, if $\rho_0=\rho_0(X,\chi_0)$ is a minimal representation
corresponding $X$, then all other Heisenberg
representations of dimension $\rm{dim}(\rho)$ are of the form $\rho=\chi_F\otimes \rho_0=(X, (\chi_F\circ N_{K/F})\chi_0)$,
where $\chi_F:F^\times\to \bbC^\times$. Then, 
we have (cf. \cite{Z2}, p. 305, equation (5))
\begin{equation}\label{eqn 5.1.27}
 \rm{sw}_F(\rho)=\rm{sw}_F(\chi_F\otimes\rho_0)=\sqrt{[F^\times:\rm{Rad}(X)]}\cdot\rm{max}\{j(\chi_F), j(X)\}.
\end{equation}

\end{rem}

For a minimal conductor U-isotopic Heisenberg representation, we have the following proposition.

\begin{prop}\label{Proposition conductor}
 Let $\rho=\rho(X_\eta,\chi_K)$ be a U-isotropic Heisenberg representation of $G_F$ of the minimal conductor. 
 Then, we have the following conductor relation
 \begin{center}
  $j(X_\eta)=j(\eta)$, $\rm{sw}_F(\rho)=\rm{dim}(\rho)\cdot j(X_\eta)=\#\eta\cdot j(\eta)$,
  $a_F(\rho)=\rm{sw}_F(\rho)+\rm{dim}(\rho)=\#\eta(j(\eta)+1)=\#\eta\cdot a_F(\eta)$.
 \end{center}
 
\end{prop}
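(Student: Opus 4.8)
The plan is to reduce all three displayed identities to the single assertion $j(X_\eta)=j(\eta)$, and then to obtain them by substitution into the structural formulas already recorded for minimal Heisenberg representations and for irreducible representations.

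First I would establish $j(X_\eta)=j(\eta)$. Unwinding the definition of the jump of an alternating character, $X_\eta|_{UU^i}\equiv 1$ holds precisely when $U_F^i\subseteq\mathrm{Rad}(X_\eta)$: indeed $UU^i\subseteq FF^\times$ corresponds, via the commutator isomorphism (\ref{eqn 5.1.2}) and the reciprocity map, to the classes of commutators $[u,g]$ with $u$ in (the image of) $G_F^i$ --- i.e. with $u\in U_F^i$, since the upper-numbering filtration of $G_F^{ab}$ matches the unit filtration under reciprocity --- and $g\in F^\times$ arbitrary, and $X_\eta$ evaluated on such a class equals $X_\eta(u,g)$. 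By the Remark following Lemma \ref{Lemma U-isotropic}, $U_F^i\subseteq\mathrm{Rad}(X_\eta)$ is equivalent to $\eta$ being a character of $U_F/U_F^i$, i.e. to $\eta|_{U_F^i}\equiv 1$. Hence $j(X_\eta)=\max\{i:\eta|_{U_F^i}\not\equiv 1\}=j(\eta)$. (Alternatively one argues directly from the explicit pairing $X_\eta(u,\pi_F)=\eta(u)$ of Lemma \ref{Lemma U-isotropic}, together with the fact that $X_\eta$ kills $U_F\wedge U_F$, so that $X_\eta(u,\cdot)$ is determined by its value at $\pi_F$.)

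Now I would assemble the conductor formulas. By Lemma \ref{Lemma U-isotropic}, $\dim(\rho)=\#\eta$. Since $\rho=\rho(X_\eta,\chi_K)$ is irreducible and, by hypothesis, of minimal conductor, the Swan formula (\ref{eqn 5.1.26}) gives $\mathrm{sw}_F(\rho)=\dim(\rho)\cdot j(X_\eta)$, and combined with Step 1 this is $\#\eta\cdot j(\eta)$. For the Artin conductor, assume $\eta\neq 1$ (the case $\#\eta=1$ being the degenerate trivial one, to be excluded). Then $\rho$ is irreducible of dimension $\#\eta>1$, so it cannot factor through the abelian quotient $G_F/G_0$; since by Lemma \ref{Lemma U-equivalent} we have $\rho=\mathrm{Ind}_{E/F}(\chi_E)$ with $E/F$ unramified, $\rho|_{G_0}$ is a sum of $\mathrm{Gal}(E/F)$-conjugates of $\chi_E|_{G_0}$, and none of these can be trivial (otherwise all would be, making $\rho|_{G_0}$ trivial). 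Thus $\langle 1,\rho\rangle_{G_0}=0$, and (\ref{eqn 5.1.23}) --- equivalently the irreducible case (\ref{eqn 5.1.281}) --- gives $a_F(\rho)=\mathrm{sw}_F(\rho)+\dim(\rho)=\#\eta\, j(\eta)+\#\eta=\#\eta(j(\eta)+1)$. Finally, for a nontrivial character $\eta$ of $U_F$ one has $a_F(\eta)=j(\eta)+1$ directly from the definitions of conductor and jump, so $a_F(\rho)=\#\eta\cdot a_F(\eta)$, which completes all three identities.

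The main obstacle is Step 1: making the two jump conventions agree exactly (with no off-by-one shift) requires pinning down the subgroup $UU^i\subseteq FF^\times$ precisely through (\ref{eqn 5.1.2}) and the reciprocity map. The explicit pairing of Lemma \ref{Lemma U-isotropic} (and the $U$-isotropy $X_\eta|_{U_F\wedge U_F}\equiv1$) makes this essentially mechanical once the identification is set up; after that everything is a direct substitution into (\ref{eqn 5.1.26}), (\ref{eqn 5.1.23}) and (\ref{eqn 5.1.281}). One should also keep in mind the degenerate case $\#\eta=1$, where $\rho$ is the trivial character and the stated formula for $a_F(\rho)$ need not hold, so it must be excluded or treated separately.
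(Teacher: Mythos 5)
Your proposal is correct and follows essentially the same route as the paper: both reduce everything to $j(X_\eta)=j(\eta)$ via the identification of $UU^i$ modulo $U\wedge U$ with $U^i\wedge\langle\pi_F\rangle$ (Zink's Propositions 4(i) and 5(ii) of \cite{Z5}) together with the explicit pairing of Lemma \ref{Lemma U-isotropic}, and then substitute into (\ref{eqn 5.1.26}) and (\ref{eqn 5.1.23}). The only differences are improvements in care: you actually justify $\langle 1,\rho\rangle_{G_0}=0$ (which the paper merely asserts) and you flag the degenerate case $\#\eta=1$, which the paper silently ignores.
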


\begin{proof}
 From \cite{Z5}, on p. 126, Proposition 4(i) and Proposition 5(ii), and $U\wedge U=U^1\wedge U^1$, we see that the injection 
$U^i\wedge F^\times\subseteq UU^i$ induces a natural isomorphism 
$$U^i\wedge<\pi_F>\cong UU^{i}/UU^i\cap (U\wedge U)$$
for all $i\ge 0$. 

Now, let $j(X_\eta)=n-1$, hence $X_\eta|_{UU^n}=1$ but $X_\eta|_{UU^{n-1}}\ne 1$.
This gives $X_\eta|_{U^n\wedge<\pi_F>}=1$ but $X_\eta|_{U^{n-1}\wedge<\pi_F>}\ne 1$. From Equation (\ref{eqn 5.1.25}),
we can conclude that $\eta(x)=1$ for all $x\in U^n$ but $\eta(x)\ne 1$ for $x\in U^{n-1}$. Hence 
$$j(\eta)=n-1=j(X_\eta).$$
From the definition of $j(\chi)$, where $\chi$ is a character of $F^\times$, we can see that 
$j(\chi)=a(\chi)-1$, i.e., $a(\chi)=j(\chi)+1$.

From Equation (\ref{eqn 5.1.26}), we obtain:
$$\rm{sw}_F(\rho)=\rm{dim}(\rho)\cdot j(X_\eta)=\#\eta\cdot j(\eta),$$
since $\rm{dim}(\rho)=\#\eta$ and $j(X_\eta)=j(\eta)$. Finally, from  equation (\ref{eqn 5.1.23}) for $\rho$ (here $<1,\rho>_{G_0}=0$),
we have 
\begin{equation}\label{eqn 5.1.28}
 a_F(\rho)=\rm{sw}_F(\rho)+\rm{dim}(\rho)=\#\eta\cdot j(\eta)+\#\eta=\#\eta\cdot (j(\eta)+1)=\#\eta\cdot a_F(\eta).
\end{equation}
\end{proof}

From Proposition \ref{Proposition 5.1.20} and Proposition \ref{Proposition conductor}, 
we obtain the following result.

\begin{lem}\label{Lemma general conductor}
 Let $\rho=\rho(X_\eta,\chi_K)$ be a U-isotopic Heisenberg representation 
 of the minimal conductor of the absolute Galois group $G_F$ of a non-archimedean
 local field $F$.
 Let $K=K_\eta$ correspond to the radical of $X_\eta$, and let $E_1/F$ be the maximal unramified subextension, and $E/F$
 be any maximal cyclic and totally ramified subextension in $K/F$. Let $m$ denote the order of $\eta$.
 Then, $\rho$ is induced by $\chi_{E_1}$ or by 
 $\chi_E$ respectively, and we have 
 \begin{enumerate}
  \item $a_E(\chi_E)=m\cdot a(\eta)-d_{E/F}$,
  \item $a_{E_1}(\chi_{E_1})=a(\eta)$,
  \item and for the character $\chi_K\in\widehat{K^\times}$,
  $$a_K(\chi_K)=m\cdot a(\eta)-d_{K/F}.$$
 \end{enumerate}
Moreover, $a_E(\chi_E)=a_K(\chi_K)$. 
\end{lem}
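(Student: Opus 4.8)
The plan is to combine the two conductor computations that have just been established. By Proposition \ref{Proposition conductor}, the minimal $U$-isotropic Heisenberg representation $\rho=\rho(X_\eta,\chi_K)$ of dimension $m=\#\eta$ has Artin conductor $a_F(\rho)=m\cdot a_F(\eta)=m(j(\eta)+1)$. By Lemma \ref{Lemma U-equivalent} (or Remark \ref{Remark 5.1.14}), $\rho$ is induced from the maximal unramified subextension $E_1/F$ via a character $\chi_{E_1}$ with $\chi_{E_1}\circ N_{K/E_1}=\chi_K$, and from any maximal cyclic totally ramified subextension $E/F$ via a character $\chi_E$ with $\chi_E\circ N_{K/E}=\chi_K$. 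So all three of $\chi_{E_1}$, $\chi_E$, $\chi_K$ realize $\rho$ as an induced representation, and Proposition \ref{Proposition 5.1.20} applies to each.

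First I would handle (2). Here $E_1/F$ is unramified of degree $f_{E_1/F}=m$, so $d_{E_1/F}=0$. Applying the conductor-of-induction formula (\ref{eqn 5.1.24}) with $\rho_{E_1}=\chi_{E_1}$ gives $a_F(\rho)=f_{E_1/F}\bigl(d_{E_1/F}\cdot 1+a_{E_1}(\chi_{E_1})\bigr)=m\cdot a_{E_1}(\chi_{E_1})$. Comparing with $a_F(\rho)=m\cdot a_F(\eta)$ from Proposition \ref{Proposition conductor} yields $a_{E_1}(\chi_{E_1})=a_F(\eta)=a(\eta)$, which is (2).

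Next I would do (1) and (3) together. For $E/F$ maximal cyclic and totally ramified we have $f_{E/F}=1$ and $e_{E/F}=m$; formula (\ref{eqn 5.1.24}) gives $a_F(\rho)=f_{E/F}\bigl(d_{E/F}+a_E(\chi_E)\bigr)=d_{E/F}+a_E(\chi_E)$, so $a_E(\chi_E)=a_F(\rho)-d_{E/F}=m\cdot a(\eta)-d_{E/F}$, which is (1). For (3), apply the second identity of Proposition \ref{Proposition 5.1.20}, namely $a(\chi_K)=e_{K/E}\cdot a(\chi_E)-d_{K/E}$, using the $E$ just chosen; since $K/E_1$ is unramified and $K/E$ has $e_{K/E}=f_{E_1/F}=m$. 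Actually it is cleaner to use $K/E_1$: since $K/E_1$ is unramified, Proposition \ref{Proposition 5.1.20} gives $a_K(\chi_K)=a_{E_1}(\chi_{E_1})=a(\eta)$ — but wait, that contradicts the claimed $a_K(\chi_K)=m\cdot a(\eta)-d_{K/F}$ unless $d_{K/F}=(m-1)a(\eta)$, so one must instead route through the totally ramified $E$: $a_K(\chi_K)=e_{K/E}a_E(\chi_E)-d_{K/E}$ with $e_{K/E}=m/e_{E/F}\cdot\ldots$; the correct bookkeeping uses $e_{E/F}=m$, $e_{K/E}=f_{E_1/F}=m$ is wrong too. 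The right approach: apply (\ref{eqn 5.1.24}) directly with $\rho_K=\chi_K$, i.e. $m\cdot a_F(\rho)=a_F(\mathrm{Ind}_{K/F}\chi_K)=f_{K/F}(d_{K/F}+a_K(\chi_K))=m(d_{K/F}+a_K(\chi_K))$, hence $a_K(\chi_K)=a_F(\rho)-d_{K/F}=m\cdot a(\eta)-d_{K/F}$, which is (3). Finally, the last assertion $a_E(\chi_E)=a_K(\chi_K)$ follows by combining (1) and (3) with the different transitivity $d_{K/F}=d_{K/E}+e_{K/E}d_{E/F}$: since $E/F$ is totally tamely ramified, $d_{E/F}=e_{E/F}-1=m-1$; wait — but $E/F$ need not be tame in general. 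The cleaner route is Proposition \ref{Proposition 5.1.20}'s ``in particular'' clause: $a_K(\chi_K)=a_E(\chi_E)$ when $K/E$ is unramified, and $K/E$ is indeed unramified because $E$ already contains the totally ramified part (it is a maximal cyclic totally ramified subextension, so $K/E$ is the unramified part). Thus the main obstacle is purely bookkeeping: choosing the right intermediate field for each invocation of Proposition \ref{Proposition 5.1.20} and keeping the ramification/residue indices of $K/F$, $E/F$, $E_1/F$ straight, using $e_{K/F}=f_{K/F}=m$ from Remark \ref{Remark 5.1.14}.
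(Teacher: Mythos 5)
Your final argument is correct and is essentially the paper's proof: both apply the induction-conductor formula (\ref{eqn 5.1.24}) to the three realizations of $\rho$ (from $E_1$, from $E$, and via $\mathrm{Ind}_{K/F}(\chi_K)=m\cdot\rho$) together with $a_F(\rho)=m\cdot a(\eta)$ from Proposition \ref{Proposition conductor}, and both deduce the final equality $a_E(\chi_E)=a_K(\chi_K)$ from the fact that $K/E$ is unramified when $E/F$ is a maximal cyclic totally ramified subextension. The one false start --- ``since $K/E_1$ is unramified'' --- should simply be deleted: $E_1$ is the \emph{maximal unramified} subextension, so $K/E_1$ is totally ramified of degree $m$ with $e_{K/E_1}=m$ and $d_{K/E_1}=d_{K/F}$, which is exactly why Proposition \ref{Proposition 5.1.20} with base $E_1$ yields $a_K(\chi_K)=m\cdot a(\eta)-d_{K/F}$ rather than $a(\eta)$; your eventual direct computation via $a_F(\mathrm{Ind}_{K/F}\chi_K)=m\cdot a_F(\rho)$ reaches the same conclusion and is valid.
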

\begin{proof}
The proof of these assertions follows from Equation (\ref{eqn 5.1.24}) and Proposition \ref{Proposition conductor}. When 
$\rho=\rm{Ind}_{E/F}(\chi_E)$, where $E/F$ is a maximal cyclic and totally ramified subextension in $K/F$, from Equation 
(\ref{eqn 5.1.24}), we obtain
\begin{align*}
 a_F(\rho)
 &=m\cdot a(\eta)\quad\text{using Proposition $\ref{Proposition conductor}$},\\
 &=f_{E/F}\cdot\left(d_{E/F}\cdot 1+a_E(\chi_E)\right),\quad\text{since $\rho=\rm{Ind}_{E/F}(\chi_E)$}\\
 &=1\cdot\left(d_{E/F}+a_E(\chi_E)\right),
\end{align*}
because $E/F$ is totally ramified; hence $f_{E/F}=1$.  This implies $a_E(\chi_E)=m\cdot a(\eta)-d_{E/F}$.

Similarly, when $\rho=\rm{Ind}_{E_1/F}(\chi_{E_1})$, where $E_1/F$ is the maximal unramified subextension in $K/F$; hence 
$f_{E_1/F}=m$ and $d_{E_1/F}=0$, using Equation (\ref{eqn 5.1.24}), we obtain $a_{E_1}(\chi_{E_1})= a(\eta)$.

Again, from Proposition \ref{Proposition 5.1.20} we have 
$$a_K(\chi_K)=m\cdot a(\chi_{E_1})-d_{K/E_1}=m\cdot a(\eta)-d_{K/F}.$$

Finally, because $E/F$ is a maximal cyclic totally ramified implies that $K/E$ is unramified, and therefore 
$$d_{E/F}=d_{K/F},\quad\text{and hence}\; a_E(\chi_E)=a_K(\chi_K).$$
\end{proof}

\begin{rem}\label{Remark 5.1.22}
 Assume that we are in the dimension $m=\#\eta$ prime to $p$ case. Then, from Corollary \ref{Corollary U-isotropic}, $\eta$
must be a character of $U/U^1$ (for $U=U_F$), hence
$$ a(\eta)=1\qquad  a_F(\rho_0) =m.$$
Therefore, in this case, the minimal conductor of $\rho$ is $m$, hence it is equal to the dimension of $\rho$. 

From Lemma \ref{Lemma general conductor}, in this case, we have 
$$a_{E_1}(\chi_{E_1})=a(\eta)=1.$$
And $K/F, E/F$ are tamely ramified of the ramification exponent $e_{K/F}=m$, hence
$$ a_E(\chi_E) = a_K(\chi_K) = m\cdot a(\eta)-d_{K/F}=m -(e_{K/F}-1)=m-(m-1)=1.$$
Thus, we can conclude that in this case, all three characters (i.e., $\chi_{E_1},\chi_E$, and $\chi_K$) are of conductor $1$.

In the general case $a_{E_1}(\chi_{E_1}) = a(\eta)$ and
$$a_E(\chi_E)= a_K(\chi_K) = m\cdot a(\eta)-d,$$
where $d=d_{E/F}=d_{K/F}$, the conductors will be different.
\end{rem}

In general, if $\rho=\rho_0\otimes\chi_F$, where $\rho_0$ is a finite-dimensional 
minimal conductor representation of $G_F$, and 
$\chi_F\in\widehat{F^\times}$, then we obtain the following result.

\begin{lem}\label{Lemma 5.1.23}
 Let $\rho_0$ be a finite-dimensional representation of $G_F$ of the minimal conductor.
 Then, we have 
 \begin{equation}
  a_F(\rho)=\rm{dim}(\rho_0)\cdot a_F(\chi_F),
 \end{equation}
where $\rho=\rho_0\otimes\chi_F=\rho(X_\eta,(\chi_F\circ N_{K/F})\chi_0)$ and $\chi_F\in\widehat{F^\times}$ with 
$a(\chi_F)>\frac{a(\rho_0)}{\rm{dim}(\rho)}$.
\end{lem}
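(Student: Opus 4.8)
The plan is to reduce everything to the conductor formula for tensor products of virtual representations with linear characters, exploiting the hypothesis $a(\chi_F) > a(\rho_0)/\dim(\rho_0)$, i.e.\ that $\chi_F$ dominates the ``slope'' of $\rho_0$. Write $\rho = \rho_0 \otimes \chi_F$ and decompose $\rho_0$ (as a virtual representation of $G_F$, or after descending to a suitable finite quotient $G = \mathrm{Gal}(L/F)$) into its isotypic/irreducible constituents. Since conductors are additive over direct sums and compatible with the virtual ring structure, it suffices to treat a single irreducible constituent $\sigma$ of $\rho_0$, establish $a_F(\sigma \otimes \chi_F) = \dim(\sigma)\cdot a_F(\chi_F)$ whenever $a(\chi_F)$ exceeds the jump-based slope of $\sigma$, and then sum.

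First I would recall that for an irreducible $\sigma$, equation (\ref{eqn 5.1.281}) gives $a_F(\sigma) = \dim(\sigma)(j(\sigma)+1)$, so the hypothesis $a(\chi_F) > a(\rho_0)/\dim(\rho_0)$ translates (using that $\rho_0$ has minimal conductor, so its jump equals $j(X_\eta)$ in the Heisenberg case, and more generally $a(\rho_0)/\dim(\rho_0) = j(\rho_0)+1$) into $j(\chi_F) = a(\chi_F) - 1 \ge j(\sigma)$ for every constituent $\sigma$, in fact with strict domination of the Swan slope. Then I would use the standard fact about conductors of twists: if $j(\chi_F) > j(\sigma)$, the Artin conductor of $\sigma\otimes\chi_F$ is controlled entirely by $\chi_F$, namely $\mathrm{sw}_F(\sigma\otimes\chi_F) = \dim(\sigma)\cdot j(\chi_F)$ and hence $a_F(\sigma\otimes\chi_F) = \dim(\sigma)(j(\chi_F)+1) = \dim(\sigma)\cdot a_F(\chi_F)$. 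In the Heisenberg situation this is precisely equation (\ref{eqn 5.1.27}): $\mathrm{sw}_F(\chi_F\otimes\rho_0) = \sqrt{[F^\times:\mathrm{Rad}(X)]}\cdot\max\{j(\chi_F),j(X)\}$, and under the conductor hypothesis the maximum is $j(\chi_F)$, so $\mathrm{sw}_F(\rho) = \dim(\rho_0)\cdot j(\chi_F)$, whence adding $\dim(\rho_0)$ (noting $\langle 1,\rho\rangle_{G_0} = 0$ since $\rho$ is ramified) gives $a_F(\rho) = \dim(\rho_0)(j(\chi_F)+1) = \dim(\rho_0)\cdot a_F(\chi_F)$.

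The cleanest route actually avoids decomposing $\rho_0$: apply equation (\ref{eqn 5.1.27}) directly with $\rho = \rho_0 \otimes \chi_F$, observe that the hypothesis $a(\chi_F) > a(\rho_0)/\dim(\rho_0)$ forces $j(\chi_F) > j(X_\eta) = j(X)$ (by Proposition \ref{Proposition conductor} and Remark \ref{Remark 5.1.14}, where $a_F(\rho_0) = \dim(\rho_0)(j(X)+1)$, so $a(\rho_0)/\dim(\rho_0) = j(X)+1$ and $a(\chi_F) > j(X)+1$ means $j(\chi_F) = a(\chi_F)-1 \ge j(X)+1 > j(X)$), and conclude $\max\{j(\chi_F),j(X)\} = j(\chi_F)$. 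Then (\ref{eqn 5.1.27}) gives $\mathrm{sw}_F(\rho) = \dim(\rho_0)\cdot j(\chi_F)$, and (\ref{eqn 5.1.23}) applied to the irreducible ramified $\rho$ yields $a_F(\rho) = \mathrm{sw}_F(\rho) + \dim(\rho) = \dim(\rho_0)\cdot j(\chi_F) + \dim(\rho_0) = \dim(\rho_0)\cdot(j(\chi_F)+1) = \dim(\rho_0)\cdot a_F(\chi_F)$, using $j(\chi_F)+1 = a(\chi_F) = a_F(\chi_F)$.

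The main obstacle is checking that the inequality in the hypothesis genuinely gives strict domination $j(\chi_F) > j(X)$ rather than mere equality of jumps — one must be careful that $a(\rho_0)/\dim(\rho_0)$ need not be an integer in general, so $a(\chi_F) > a(\rho_0)/\dim(\rho_0)$ with $a(\chi_F)$ an integer gives $a(\chi_F) \ge \lfloor a(\rho_0)/\dim(\rho_0)\rfloor + 1$, and one needs $j(X) + 1 = a(\rho_0)/\dim(\rho_0)$ to be exactly an integer (which holds for minimal-conductor Heisenberg $\rho_0$ by Proposition \ref{Proposition conductor}) to conclude $j(\chi_F) \ge j(X)+1$. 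A secondary point is justifying that $\rho = \rho_0 \otimes \chi_F$ is still irreducible (automatic, since twisting an irreducible by a character preserves irreducibility) and ramified (clear, as $\rho_0$ is), so that (\ref{eqn 5.1.23}) simplifies with $\langle 1,\rho\rangle_{G_0}=0$; if one instead allows $\rho_0$ to be a general virtual representation one falls back on additivity and handles each constituent as above, the only extra care being that the domination hypothesis must hold uniformly, which is exactly what $a(\chi_F) > a(\rho_0)/\dim(\rho_0) = \max_i(j(\sigma_i)+1)$ encodes for a minimal-conductor $\rho_0$.
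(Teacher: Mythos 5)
Your ``cleanest route'' is exactly the paper's argument: apply equation (\ref{eqn 5.1.27}) (equivalently, $a_F(\rho_0\otimes\chi_F)=\dim(\rho_0)(1+\max\{j(\rho_0),j(\chi_F)\})$) and observe that the hypothesis $a(\chi_F)>a(\rho_0)/\dim(\rho_0)=1+j(\rho_0)$ forces the maximum to be $a(\chi_F)$. The integrality worry you raise is moot, since $a(\chi_F)>1+j(\rho_0)$ already gives $j(\chi_F)>j(\rho_0)$ directly without any rounding; the proposal is correct.
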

\begin{proof}
From Equation (\ref{eqn 5.1.281}), we have $a_F(\rho_0)=\rm{dim}(\rho_0)\cdot (1+j(\rho_0))$.
Under the given condition, $\rho_0$ is a minimal conductor representation. 
Therefore, for representation $\rho=\rho_0\otimes\chi_F$, we have 
\begin{align*}
 a_F(\rho)
 &=a_F(\rho_0\otimes\chi_F)=\rm{dim}(\rho_0)\cdot\left(1+\rm{max}\{j(\rho_0),j(\chi_F)\}\right)\\
 &=\rm{dim}(\rho_0)\cdot\rm{max}\{1+j(\chi_F), 1+j(\rho_0)\}\\
 &=\rm{dim}(\rho_0)\cdot\rm{max}\{a(\chi_F), 1+j(\rho_0)\}\\
 &=\rm{dim}(\rho_0)\cdot a_F(\chi_F),
\end{align*}
because by the given condition 
$$a(\chi_F)>\frac{a(\rho_0)}{\rm{dim}(\rho_0)}=\frac{\rm{dim}(\rho_0)\cdot(1+j(\rho_0))}{\rm{dim}(\rho_0)}=1+j(\rho_0).$$

\end{proof}

\begin{prop}\label{Proposition 5.1.23}
 Let $\rho=\rho(X,\chi_K)$ be a Heisenberg representation of dimension $m$ of the absolute Galois group $G_F$ of a 
 non-archimedean local field $F$.
 Then, $m| a_F(\rho)$ if and only if:\\
$X$ is U-isotropic, or (if $X$ is not U-isotropic) $a_F(\rho)$ is with respect to $X$ not the minimal conductor.
\end{prop}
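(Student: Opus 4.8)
The plan is to reduce the whole statement to a single assertion about the jump $j(X)$. If $m=1$ then $X$ is trivial, hence $U$-isotropic, and $1\mid a_F(\rho)$; so assume $m\geq 2$. Then $\rho$ is irreducible of dimension $>1$, hence ramified, so $\langle 1,\rho\rangle_{G_F^0}=0$ and $a_F(\rho)=\mathrm{sw}_F(\rho)+m$ by \eqref{eqn 5.1.23}; thus $m\mid a_F(\rho)$ iff $m\mid \mathrm{sw}_F(\rho)$. Writing $\rho=\chi_F\otimes\rho_0$ with $\rho_0=\rho_0(X,\chi_0)$ of minimal conductor for $X$, \eqref{eqn 5.1.27} gives $\mathrm{sw}_F(\rho)=m\cdot\max\{j(\chi_F),j(X)\}$, so $m\mid a_F(\rho)$ iff $\max\{j(\chi_F),j(X)\}\in\mathbb{Z}$; comparing with \eqref{eqn 5.1.26}, $\rho$ has minimal conductor for $X$ exactly when $j(\chi_F)\leq j(X)$. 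Since $j(\chi_F)=a(\chi_F)-1\in\mathbb{Z}_{\geq 0}$ always, the proposition follows once we establish that $j(X)\in\mathbb{Z}$ if and only if $X$ is $U$-isotropic. Indeed, if $X$ is $U$-isotropic then $\max\{j(\chi_F),j(X)\}\in\mathbb{Z}$ for every $\chi_F$, so $m\mid a_F(\rho)$ always; if $X$ is not $U$-isotropic then $j(X)\notin\mathbb{Z}$, so $j(\chi_F)=j(X)$ is impossible, and $m\mid a_F(\rho)$ holds iff $j(\chi_F)>j(X)$, i.e.\ iff $a_F(\rho)>m(j(X)+1)=a_F(\rho_0)$, i.e.\ iff $a_F(\rho)$ is not the minimal conductor for $X$ --- which is precisely the asserted dichotomy.

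If $X=X_\eta$ is $U$-isotropic, then $j(X_\eta)=j(\eta)=a(\eta)-1\in\mathbb{Z}$ by Proposition~\ref{Proposition conductor} (whose relevant part uses only the formula \eqref{eqn 5.1.25} and the isomorphism $U^i\wedge\langle\pi_F\rangle\cong UU^i/UU^i\cap(U\wedge U)$, not minimality). Equivalently one reaches the conclusion $m\mid a_F(\rho)$ directly: by Lemma~\ref{Lemma U-equivalent}, $\rho_0=\mathrm{Ind}_{E_1/F}(\chi_{E_1})$ with $E_1/F$ unramified of degree $m$, so by the projection formula and \eqref{eqn 5.1.24}, $a_F(\rho)=a_F\!\left(\mathrm{Ind}_{E_1/F}\bigl(\chi_{E_1}\cdot(\chi_F\circ N_{E_1/F})\bigr)\right)=m\cdot a_{E_1}\!\bigl(\chi_{E_1}(\chi_F\circ N_{E_1/F})\bigr)\in m\mathbb{Z}$.

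For the converse, suppose $X$ is not $U$-isotropic; I must show $j(X)\notin\mathbb{Z}$. By Lemma~\ref{Lemma dimension equivalent} the hypothesis forces $p\mid m$ and makes $K/F$ (corresponding to $\mathrm{Rad}(X)$) wildly ramified. Fix a maximal isotropic subextension $E/F\subseteq K/F$, so $\rho_0=\mathrm{Ind}_{E/F}(\chi_E)$ with $[E:F]=m$ and $E/F$ abelian. Then $E/F$ is itself wildly ramified: ``$X$ not $U$-isotropic'' means $X|_{U\wedge U}=X|_{U^1\wedge U^1}\neq 1$, so the wild inertia subgroup $W\subset \mathrm{Gal}(K/F)\cong F^\times/\mathrm{Rad}(X)$ is not isotropic, hence $W\not\subseteq \mathrm{Gal}(K/E)$ (a subgroup of an isotropic subgroup is isotropic), so $W$ has nontrivial image in $\mathrm{Gal}(E/F)$ and $p\mid e_{E/F}$. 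Combining \eqref{eqn 5.1.23}, \eqref{eqn 5.1.26} and \eqref{eqn 5.1.24} with $m=e_{E/F}f_{E/F}$ yields the identity $e_{E/F}\bigl(j(X)+1\bigr)=d_{E/F}+a_E(\chi_E)$, so that $j(X)\in\mathbb{Z}$ if and only if $e_{E/F}\mid d_{E/F}+a_E(\chi_E)$.

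Thus everything comes down to ruling out $e_{E/F}\mid d_{E/F}+a_E(\chi_E)$ for the \emph{minimal}-conductor $\chi_E$, and I expect this to be the main obstacle. The input needed is Zink's explicit description of the minimal Heisenberg representation (Theorem~\ref{Theorem 5.1.1}, Remark~\ref{Remark 5.1.14}), which pins down $\chi_E|_{\mathcal{N}_{K/E}}$ and the least conductor $a_E(\chi_E)$ compatible with it, together with the Hasse--Herbrand calculus for the wildly ramified abelian extensions $E/F$ and $K/F$: because $\phi_{K/F}$ fails to preserve integrality beyond a wild ramification break, the top ramification break of the non-abelian group $\mathrm{Gal}(L/F)$ with $L=\overline{F}^{\ker\rho_0}$ --- which equals $j(X)$ --- is forced to be non-integral, giving $e_{E/F}\nmid d_{E/F}+a_E(\chi_E)$. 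Alternatively one may argue directly on $FF^\times$, using the decomposition $j(X)=\max\{j(\eta_X),\,j(X|_{U^1\wedge U^1})\}$ coming from $UU^i/UU^i\cap(U\wedge U)\cong U^i\wedge\langle\pi_F\rangle$, and showing that the wild term $j(X|_{U^1\wedge U^1})$ dominates $j(\eta_X)$ and is non-integral; pinning down the exact relation of Zink that closes this gap is the crux.
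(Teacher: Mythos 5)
Your reduction is sound and runs parallel to the paper's own argument: the paper likewise splits into (a) non-minimal conductor, handled by Lemma \ref{Lemma 5.1.23} (your $j(\chi_F)>j(X)$ case), (b) minimal and $U$-isotropic, handled by Proposition \ref{Proposition conductor} (your $j(X_\eta)=j(\eta)\in\bbZ$ step, and your alternative induction-from-the-unramified-$E_1$ computation is also fine), and (c) minimal and not $U$-isotropic, where one must show $j(X)\notin\bbZ$. Your bookkeeping identity $e_{E/F}\bigl(j(X)+1\bigr)=d_{E/F}+a_E(\chi_E)$ is correct, as is the observation that $p\mid e_{E/F}$ in case (c).

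However, case (c) is exactly where you stop: you explicitly defer ``ruling out $e_{E/F}\mid d_{E/F}+a_E(\chi_E)$'' to an unspecified ``relation of Zink,'' so the crux of the proposition is asserted rather than proved. This is a genuine gap, not a presentational one, because the whole content of the ``only if'' direction lives there. The paper closes it not by any Hasse--Herbrand computation on $E/F$ and $K/F$, but by quoting a structural fact about the filtration of $FF^\times$ itself: by \cite{Z5} (p.~126, Proposition 5(ii)) one has the direct product decomposition $UU^i=\bigl(UU^i\cap(U^1\wedge U^1)\bigr)\times\bigl(U^i\wedge\langle\pi_F\rangle\bigr)$, and \emph{all} jumps $v$ of the filtration $\{UU^i\cap(U^1\wedge U^1)\}_{i\in\bbR_+}$ are non-integral (and $>1$). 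Since ``$X$ not $U$-isotropic'' means precisely $X|_{U^1\wedge U^1}\not\equiv 1$, the jump $j(X)$ is attained on the wild factor and is therefore not an integer, so $a_F(\rho_0)=m(j(X)+1)\notin m\bbZ$. This is exactly the ``alternative'' route you sketch in your last sentence; to complete your proof you would need to either import Zink's Proposition 5 as the paper does, or actually carry out the ramification-theoretic computation you gesture at --- neither is done in the proposal as written.
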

\begin{proof}
From Lemma \ref{Lemma 5.1.23}, we know that if $\rho$ is not minimal, $a_F(\rho)$ is always a multiple of the 
dimension $m$. Therefore, now we only have to check for minimal conductors. 
In the U-isotropic case, the minimal conductor is multiple
of the dimension (cf. Proposition \ref{Proposition conductor}). 

Finally, suppose that $X$ is not U-isotropic, i.e., $X|_{U\wedge U}=X|_{U^1\wedge U^1}\not\equiv1$, because 
$U\wedge U=U^1\wedge U^1$ (see the Remark on p. 126 of \cite{Z5}). We also know that 
$UU^i=(UU^i\cap U^1\wedge U^1)\times(U^i\wedge<\pi_F>)$ (cf. \cite{Z5}, p. 126, Proposition 5(ii)). 
In Proposition 5 of \cite{Z5}, we observe that all the jumps $v$ in the filtration $\{UU^i\cap (U^1\wedge U^1)\}, i\in\bbR_{+}$
are not {\bf integers with $v>1$}. This shows that $j(X)$ is also not an integer, hence $a_F(\rho_0)$ is not a 
multiple of the dimension. This implies that the conductor $a_F(\rho)$ is not minimal.

\end{proof}

For a minimal conductor Heisenberg representation, we have the following theorem.
\begin{prop}\label{Proposition-A}
 Let $\rho=\rho(X_\eta,\chi_K)$ be a Heisenberg representation of the 
 absolute Galois group $G_F$ of a non-archimedean local field $F/\bbQ_p$ of
 dimension $m$ prime to $p$. Then, it is of minimal conductor $a_F(\rho)=m$ if and only if $\rho$ is a representation of 
 $G_F/V_F$, where $V_F$ is the subgroup of wild ramification.
\end{prop}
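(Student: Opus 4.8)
The plan is to reduce the biconditional to the single numerical condition $j(\rho)=0$, and then to identify $j(\rho)=0$ with the assertion that $\rho$ is trivial on wild inertia. To begin, since $\dim(\rho)=m$ is prime to $p$, Lemma \ref{Lemma dimension equivalent} together with Corollary \ref{Corollary U-isotropic} gives $X=X_\eta$ for a character $\eta$ of $U_F/U_F^1$, so $a(\eta)=1$ and $K_\eta/F$ is tamely ramified, and Proposition \ref{Proposition conductor} yields $j(X_\eta)=j(\eta)=a(\eta)-1=0$; moreover Remark \ref{Remark 5.1.22} records that among all Heisenberg representations $\rho(X_\eta,\chi)$ the smallest Artin conductor is exactly $a_F(\rho_0)=\#\eta\cdot a_F(\eta)=m$. (This implicitly uses $m>1$; the case $m=1$ has to be excluded, since there a character of conductor $0$ already factors through $G_F/V_F$.) Because $\rho$ is irreducible of dimension $m>1$ it cannot be unramified --- an unramified irreducible representation of $G_F$ is one-dimensional --- so $\rho|_{G_F^0}\not\equiv 1$ and hence $j(\rho)\ge 0$. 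The irreducible case of the conductor identity (\ref{eqn 5.1.281}) then gives $a_F(\rho)=\dim(\rho)\bigl(j(\rho)+1\bigr)=m\bigl(j(\rho)+1\bigr)\ge m$, with equality precisely when $j(\rho)=0$. Comparing with Remark \ref{Remark 5.1.22}, this says: $\rho$ has minimal conductor $a_F(\rho)=m$ if and only if $j(\rho)=0$.

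It remains to identify $j(\rho)=0$ with $\rho$ being inflated from $G_F/V_F$. By definition $j(\rho)=\max\{i\mid\rho|_{G_F^i}\not\equiv1\}$, so $j(\rho)\le 0$ means $\rho$ is trivial on $G_F^u$ for every $u>0$, equivalently $\rho$ is trivial on the wild ramification subgroup $V_F=G_F^{0+}$ (i.e. $\rho$ is tamely ramified, $\mathrm{sw}_F(\rho)=0$). Combined with $j(\rho)\ge 0$ from the previous paragraph this gives $j(\rho)=0\iff\rho|_{V_F}\equiv1$, and since $V_F\trianglelefteq G_F$ the latter is exactly the statement that $\rho$ factors through $G_F/V_F$. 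Chaining the two equivalences proves the proposition.

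I expect the work here to be bookkeeping rather than conceptual: the main point to get right is that $m$ really is the minimal Artin conductor in the dimension-prime-to-$p$ case --- this is exactly what Remark \ref{Remark 5.1.22} supplies --- and to keep the boundary value $m=1$ out of the statement. If one prefers to bypass (\ref{eqn 5.1.281}), the same conclusion follows via (\ref{eqn 5.1.27}): writing $\rho=\rho_0\otimes\widetilde{\chi_F}$ with $\rho_0$ minimal, one has $\mathrm{sw}_F(\rho)=m\cdot\max\{j(\chi_F),j(X_\eta)\}=m\cdot\max\{j(\chi_F),0\}$, and using $a_F(\rho)=\mathrm{sw}_F(\rho)+m-\langle1,\rho\rangle_{G_0}$ together with $\langle1,\rho\rangle_{G_0}=0$ (no $\mathrm{Gal}(E_1/F)$-conjugate of the tamely ramified inducing character is unramified), one finds $a_F(\rho)=m\iff a(\chi_F)\le1$, which is precisely the condition that both $\widetilde{\chi_F}$ and $\rho_0$ are trivial on $V_F$.
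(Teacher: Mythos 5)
Your proof is correct, but it takes a genuinely different route from the paper's. The paper argues through the inducing character: since $K/F$ is tame with $d_{K/F}=e_{K/F}-1=m-1$, the induction formula (\ref{eqn 5.1.24}) shows $a_F(\rho)=m$ iff $a(\chi_K)=1$, and class field theory then identifies $a(\chi_K)=1$ with tameness of the abelian extension $L/K$ cut out by $\mathrm{Ker}(\chi_K)=\mathrm{Ker}(\rho)$, hence with $V_F\subseteq\mathrm{Ker}(\rho)$. You instead stay with $\rho$ itself: irreducibility plus ramifiedness (forced by $m>1$) gives $a_F(\rho)=m\,(j(\rho)+1)\geq m$ via (\ref{eqn 5.1.281}), so minimality is exactly $j(\rho)=0$, which by the definition of the jump is triviality on $G_F^u$ for all $u>0$, i.e.\ on $V_F$. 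Your version is shorter and sidesteps the identification $\mathrm{Ker}(\rho)=\mathrm{Ker}(\chi_K)$, which the paper uses without comment and which really rests on the Heisenberg structure; the paper's version has the mild advantage of exhibiting the concrete tame extension $L/F$ through which $\rho$ factors. You are also right to flag the degenerate case $m=1$, which the paper silently excludes (there the minimal conductor is $0$, not $m$, and the ``only if'' direction fails for unramified characters). Your closing alternative via (\ref{eqn 5.1.27}) is sound but adds nothing beyond the main argument.
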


\begin{proof}
Under the given condition, the dimension $\rm{dim}(\rho)=m$ is prime to $p$. 
Then, from Lemma \ref{Lemma dimension equivalent}
we can conclude that $K/F$ is tamely ramified with $f_{K/F}=e_{K/F}=m$ 
(cf. Remark \ref{Remark 5.1.14}), and hence $d_{K/F}=e_{K/F}-1=m-1$. Then, from the conductor 
formula (\ref{eqn 5.1.24}), we can easily see that {\bf $a(\rho)=m$ is minimal if and only if $a(\chi_K)=1$}.

Further, for some extension $L/K$, if $\cN_{L/K}=\rm{Ker}(\chi_K)$, then by class field theory, we can conclude:
{\bf $L/K$ is tamely ramified if and only if $a(\chi_K)=1$.}

Now suppose that $\rho$ is a Heisenberg representation of $G:=G_F/V_F$ of dimension $m$ prime to $p$. 
This implies $V_F\subset\rm{Ker}(\rho)=\rm{Ker}(\chi_K)=\cN_{L/K},$
where $L/K$ is some tamely ramified extension. Then, $a(\chi_K)=1$, hence $a(\rho)=m$ is minimal.

Conversely, when conductor $a(\rho)=m$ is minimal, we have $a(\chi_K)=1$. 
 By class field theory, this character $\chi_K$ determines
 an extension $L/K$ such that $\cN_{L/K}=\rm{Ker}(\chi_K)$. Becasue $a(\chi_K)=1$, here $L/K$ must be tamely ramified, hence 
 $L/F$ is tamely ramified.
 This means that $V_F$ sits in the kernel $\rm{Ker}(\rho)=G_L$ , therefore $\rho$ is actually a representation of 
 $G_F/V_F$.

\end{proof}

\section{{\bf Proof of Theorems \ref{Theorem using Deligne-Henniart} and 
\ref{General twisting formula by minimal Heisenberg representation}}}


\begin{proof}[{\bf Proof of Theorem 1.2}]
 {\bf Step-1:}
 Under the given conditions, 
 $$\rho_m=\rho_0\otimes\widetilde{\chi\chi_F},$$
 where $\rho_0$ is a minimal conductor U-isotropic Heisenberg representation of $G_F$ of dimension $m$ prime to $p$, and 
 $\widetilde{\chi_F}:G_F\to\bbC^\times$ corresponds to $\chi_F:F^\times\to\bbC^\times$ by class field theory. And
 \footnote{We also know that there are $m^2$ characters of $F^\times/{F^\times}^m$ such that 
$\rho_0\otimes\widetilde{\chi}=\rho_0$ (cf. \cite{Z2}, p. 303, Proposition 1.4). Therefore, we always have:
\begin{equation*}
 \rho=\rho_0\otimes\widetilde{\chi_F}=\rho_0\otimes\widetilde{\chi\chi_F},
\end{equation*}
where $\chi\in\widehat{F^\times/{F^\times}^m}$, and
$\widetilde{\chi_F}:G_F\to\bbC^\times$ corresponds to $\chi_F$ by class field theory.
}
 here $\chi:F^\times/{F^\times}^m\to\bbC^\times$ such that $\rho_0=\rho_0\otimes\widetilde{\chi}$.

 Let $\zeta$ be a $(q_F-1)$-st
root of unity. Because $U_F^1$ is a pro-p-group, and $gcd(p,m)=1$, we have 
\begin{equation}\label{eqn 5.2.23}
  F^\times/{F^\times}^m=<\pi_F>\times<\zeta>\times U_F^1/<\pi_F^m>\times<\zeta>^m\times U_F^1\cong \bbZ_m\times\bbZ_m,
\end{equation}
that is, a direct product of two cyclic groups of the same order. 
Hence $F^\times/{F^\times}^m\cong\widehat{F^\times/{F^\times}^m}$.
Because ${F^\times}^m=<\pi_F^m>\times<\zeta>^m\times U_F^1$, and
$F^\times/{F^\times}^m\cong \bbZ_m\times\bbZ_m,$
we have $a(\chi)\le 1$, and $\#\chi$ is a divisor of $m$ for all 
$\chi\in\widehat{F^\times/{F^\times}^m}$. Now, if we take a character $\chi_F$ of $F^\times$ of conductor $\ge 2$, hence  
$a(\chi_F)\ge 2 a(\chi)$ for all $\chi\in \widehat{F^\times/{F^\times}^m}$. Then, using Deligne's 
formula (\ref{eqn 2.3.17}), we have 
\begin{equation}\label{eqn 5.2.216}
 W(\chi\chi_F,\psi)^m=\chi(c)^m\cdot W(\chi_F,\psi)^m=W(\chi_F,\psi)^m,
\end{equation}
where $c\in F^\times$ with $\nu_F(c)=a(\chi_F)+n(\psi)$, satisfies 
$$\chi_F(1+x)=\psi(c^{-1}x),\quad\text{for all $x\in F^\times$ with $2\nu_F(x)\ge a(\chi)$}.$$

 From Proposition \ref{Proposition-A}, we can consider the representation $\rho_0$ as a representation
 of $G:=\rm{Gal}(F_{mt}/F)$, where $F_{mt}/F$ is the maximal tamely ramified subextension in $\bar{F}/F$.
 Then, we can write 
 $$\rho_0=\rm{Ind}_{E/F}(\chi_{E,0}),\quad and\quad \rho=\rm{Ind}_{E/F}(\chi_E),$$
 where $E/F$ is a cyclic tamely ramified subextension of $K/F$ of degree $m$, and 
 $$\chi_E:=\chi_{E,0}\otimes(\chi_F\circ N_{E/F}).$$
 
 {\bf Step-2:}
 Let $G$ be a finite group. Let $R(G)$ be the character ring provided with the tensor product as multiplication, and the 
 unit representation as a unit element. Then, for any zero-dimensional representations
 $\pi\in R(G)$, we have (cf. Theorem 2.1(h) on p. 40
 of \cite{RB}):
 \begin{equation}\label{eqn 5.2.217}
  \pi=\sum_{H\leq G}n_H\rm{Ind}_{H}^{G}(\chi_H-1_H),
 \end{equation}
 where $n_H\in \bbZ$ (cf. Proposition 2.24 on p. 48 of \cite{RB}) and $\chi_H\in\widehat{H}$. Moreover, from 
 Theorem 2.1 (k) of \cite{RB}, we know that $n_H\ne 0$ {\bf only if} $Z(G)\le H$ and $\chi_H|_{Z(G)}=\chi_Z$, where $Z(G)$ is 
 the center of $G$, and $\chi_Z$ is the center character.
 
 Now, we use the above formula (\ref{eqn 5.2.217}) for the representation $\rho_0-m\cdot 1_F$, and we obtain
 \begin{equation}\label{eqn 5.2.211}
 \rho_0 - m\cdot 1_F=\sum_{i=1}^{r}n_i\rm{Ind}_{E_i/F}(\chi_{E_i} - 1_{E_i}),
\end{equation}
where $E_i/F$ are intermediate fields of $K/F$, and for nonzero $n_i$, we obtain the following relation 
$$\chi_0=\chi_{E_i}\circ N_{K/E_i}.$$
Because $a(\chi_0)=1$, and $K/E_i$ are cyclic tamely ramified;\footnote{The subfields $E_i\subseteq K$ are related to Boltje's 
approach by $\rm{Gal}(K/E_i)=H_i/Z(G)$ and the fact that $\chi_{H_i}$ extends the character $\chi_Z$ which translates 
via class field theory to $\chi_{E_i}\circ N_{K/E_i}=\chi_K$. Moreover, $X=\chi_Z\circ [-,-]$ and $\chi_Z$ extendible to 
$H_i$ means that $\rm{Gal}(K/E_i)=H_i/Z(G)$ must be isotropic for $X$, hence in our situation 
$K/E_i$ must be a cyclic extension of degree dividing $m$.}, we have $a(\chi_{E_i})=1$ for all $i=1,2,\cdots,r$.
Then, from Equation (\ref{eqn 5.2.211}), we have
\begin{equation}\label{eqn 5.2.212}
 \det(\rho_0)=\prod_{i=1}^{r}(\chi_{E_i}|_{F^\times})^{n_i}.
\end{equation}

{\bf Step-3:} From Equation (\ref{eqn 5.2.211}), we can also write
\begin{equation}\label{eqn 5.2.213}
 \rho_0\otimes\widetilde{\chi\chi_F} - m\cdot\chi\chi_F=\sum_{i=1}^{r}n_i\rm{Ind}_{E_i/F}(\chi_{E_i}\theta_i-\theta_i),
\end{equation}
where $\theta_i:=\chi\chi_F\circ N_{E_i/F}$ for all $i\in\{1,2,\cdots,r\}$.
Because $a(\chi_F)\ge 2$, and $E_i/F$ are tamely ramified, 
the conductors $a(\theta_i)\ge 2$ for all $i\in\{1,2,\cdots,r\}$. Then, from Equation (\ref{eqn 5.2.213}), we can write
\begin{align}
 W(\rho,\psi)\nonumber
 &=W(\chi\chi_F,\psi)^m\cdot\prod_{i=1}^{r}W(\chi_{E_i}\theta_i-\theta_i,\psi_{E_i})^{n_i}\\\nonumber
 &=W(\chi\chi_F,\psi)^m\cdot\prod_{i=1}^{r}\frac{W(\chi_{E_i}\theta_i,\psi_{E_i})^{n_i}}{W(\theta_i,\psi_{E_i})^{n_i}}\\
 &=W(\chi\chi_F,\psi)^m\cdot\prod_{i=1}^{r}\chi_{E_i}(c_i)^{n_i},\label{eqn 5.2.214}
\end{align}
where $\psi_{E_i}=\psi\circ\rm{Tr}_{E_i/F}$, and $c_i\in E_{i}^{\times}$ such that 
\begin{center}
 $\theta_i(1+y)=\psi_{E_i}(\frac{y}{c_i}),$ for all $y\in P_{E_i}^{a(\theta_i)-[\frac{a(\theta_i)}{2}]}$.
\end{center}
Moreover, here $E_i/F$ are tamely ramified extensions, then from Lemma 18.1 of \cite{BH} on p. 123, we have
$$N_{E_i/F}(1+y)\cong 1+\rm{Tr}_{E_i/F}(y)\pmod{P_F^{a(\chi_F)}},$$
and 
$\rm{Tr}_{E_i/F}(y)\in P_F^{a(\chi_F)-[\frac{a(\chi_F)}{2}]}$. Therefore, for all 
$y\in P_{E_i}^{a(\theta_i)-[\frac{a(\theta_i)}{2}]}$, we can write (cf. Proposition 18.1 on p. 124 of \cite{BH}):
\begin{align*}
 \theta_i(1+y)
 &=\chi\chi_F\circ N_{E_i/F}(1+y)=\chi_F(1+\rm{Tr}_{E_i/F}(y))\\
 &=\psi(\frac{\rm{Tr}_{E_i/F}(y)}{c})=\psi_{E_i}(\frac{y}{c}),
\end{align*}
where $c:=c(\chi_F,\psi)$ for which $\chi_F(1+x)=\psi(\frac{x}{c})$ for $x\in P_F^{a(\chi_F)-[\frac{a(\chi_F)}{2}]}$.
This varifies that the choice $c_i(\theta_i,\psi_{E_i})=c_i(\chi\chi_F,\psi_{E_i})=c(\chi_F,\psi)\in F^{\times}$ is right 
for applying Tate-Lamprecht formula (cf. \cite{SABLT}).

Then, using Equation (\ref{eqn 5.2.212}) in Equation (\ref{eqn 5.2.214}), we obtain
\begin{equation}\label{eqn 5.2.215}
 W(\rho,\psi)=W(\chi\chi_F,\psi)^{m}\cdot \det(\rho_0)(c).
\end{equation}
Finally, using Equation (\ref{eqn 5.2.216}), from Equation (\ref{eqn 5.2.215}), we can write 
\begin{align*}
 W(\rho,\psi)
 &=W(\chi\chi_F,\psi)^{m}\cdot\det(\rho_0)(c)\\
 &=W(\chi_F,\psi)^m\cdot\det(\rho_0)(c).
\end{align*}
 
\end{proof}

Now we are in a position to prove Theorem \ref{General twisting formula by minimal Heisenberg representation}.


\begin{proof}[{\bf Proof of Theorem 1.3}]
The proof of the above assertion is the combination of Theorem \ref{Theorem using Deligne-Henniart}, and
Deligne-Henniart Theorem \ref{Deligne-Henniart's general formula}.

From the representation $\sigma$, we define a zero-dimensional virtual representation as follows:
$$\sigma_0:=\sigma-\dim(\sigma)\cdot 1_{G_F}.$$
Then,
\begin{equation}
  \det(\sigma_0)=\det(\sigma-\dim(\sigma)\cdot 1_{G_F})=\det(\sigma).
\end{equation}
Now, we want to use Theorem \ref{Deligne-Henniart's general formula} for the representation 
$\sigma_0$. 
It can be seen that 
$$\beta(\sigma_0)=\beta(\sigma-\dim(\sigma)\cdot 1_{G_F})=\beta(\sigma).$$
Hence, $\sigma_0$
satisfies the condition: $j(\rho_m)>2 \cdot \beta(\sigma)=2\cdot\beta(\sigma_0)$. Therefore, we can use 
Theorem \ref{Deligne-Henniart's general formula} for the representation $\sigma_0$, hence we can write
\begin{align*}
 W(\sigma_0\otimes\rho_{m},\psi)=\det(\sigma_0)(\gamma)\\
 =W((\sigma-\dim(\sigma)\cdot 1_{G_F})\otimes\rho_{m},\psi)\\
 =W(\sigma\otimes\rho_{m},\psi)\cdot W(\rho_{m},\psi)^{-\dim(\sigma)}.
\end{align*}
This gives 
\begin{equation}\label{eqn 5.11}
   W(\sigma\otimes\rho_{m},\psi)
 =\det(\sigma_0)(\gamma)\cdot W(\rho_{m},\psi)^{\dim(\sigma)}=\det(\sigma)(\gamma)\cdot W(\rho_m,\psi)^{\dim(\sigma)}.
\end{equation}
Now, we use Theorem \ref{Theorem using Deligne-Henniart} in equation (\ref{eqn 5.11}), and obtain
\begin{align*} 
W(\sigma\otimes\rho_{m},\psi)=
\det(\sigma)(\gamma)\cdot W(\chi_F,\psi)^{\dim(\sigma\otimes\rho_m)}\cdot\det(\rho_{0})(c^{\dim(\sigma)}).
\end{align*}

\end{proof}

\begin{rem}
 Suppose that $\rho$ is any arbitrary finite-dimensional Galois representation;
 $W(\rho,\psi)$ and $W(\rho\otimes\chi,\psi)$
 are explicitly known. Then, by the above method, under some conditions on jump of $\sigma$, 
 one can give an explicit twisting formulas for 
 $W(\sigma\otimes\rho,\psi)$ and $W(\sigma\otimes\rho',\psi)$, where $\rho':=\rho\otimes\chi$ as the above proof.
 

\end{rem}

\section{{\bf Applications}}

\subsection{{\bf Invariant root number formula for Heisenberg representations}}

From the construction of a Heisenberg representation $\rho=\rho(X,\chi_K)$ of $G_F$, we can write 
$$\rho=Ind_{E/F}(\chi_E),$$
where $K/E/F$ is the fixed field of a maximal isotropic subgroup $H=Gal(\overline{F}/E)$ of $\rho$.
The root number for $\rho$ is 
\begin{equation}\label{eqn 6.1}
 W(\rho,\psi)=\lambda_{E/F}(\psi)\cdot W(\chi_E,\psi).
\end{equation}
Here, $\lambda_{E/F}(\psi):=W(Ind_{E/F}(1_E),\psi)$ is the Langlands $\lambda$-function for the extension $E/F$ 
(cf. \cite{SABJNT}, \cite{SABJAA}).
Because for a given Heisenberg representation $\rho$, the maximal isotropic subgroups for $\rho$ are not {\bf unique}, and 
{\it there will be many maximal isotropic subgroups $H$ for $\rho$}, hence their fixed fields $E$ are also not unique.

Suppose that for a Heisenberg
representation $\rho$, we have two different maximal isotropic subgroups $H_1$, and $H_2$ of $G$. 
Let $E_1$ and $E_2$ be the 
fixed fields of $H_1$ and $H_2$, respectively. Then, we can write
$$(1)\quad \rho=Ind_{E_1/F}(\chi_{E_1}),\quad (2)\quad \rho=Ind_{E_2/F}(\chi_{E_2}).$$
Then, 
\begin{equation}\label{eqn 6.2}
 W(\rho,\psi)=\lambda_{E_1/F}(\psi)\cdot W(\chi_{E_1},\psi)=\lambda_{E_2/F}(\psi)\cdot W(\chi_{E_2},\psi).
\end{equation}
Now, if we notice equation (\ref{eqn 6.2}), the right-hand side depends
on $E_i$ ($i=1,2$). However, the root number for $\rho$ is unique
for its equivalence classes. Therefore, to give an explicit formula for $W(\rho,\psi)$, one needs to give 
an invariant formula, that is, it is independent of the choices of $E_i$. 
For a Heisenberg representation $\rho$ of $G_F$ dimension prime to $p$, one can see an explicit invariant formula for 
$W(\rho,\psi)$ in \cite{BZ}.

\begin{exm}
Let $\rho=\rho(X,\chi_K)$ be a two-dimensional Heisenberg representation of $G_F$, where $F/\bbQ_p$ and $p\ne 2$
(see Example (\ref{Example for Heisenberg reps}) in the Appendix for an explicit description of a two-dimensional Heisenberg
representations). Then, we have $Gal(K/F)\cong \bbZ/2\bbZ\times\bbZ/2\bbZ$. Therefore, there are the three maximal
isotropic 
subgroups $H_i (i=1,2,3)$ for $\rho$, and hence $E_i/F (i=1,2,3)$ are the three quadratic extensions of $F$. This can be proved 
(cf. \cite{SABJNT}, Lemma 4.7 on pp. 191-192) that $\lambda_{E_i/F}(\psi)$ are not the same. Similarly,
it can also be proved that
$W(\chi_{E_i},\psi)$ are also not the same. Therefore, we cannot use equation (\ref{eqn 6.2}) to give an explicit
formula for $W(\rho,\psi)$.

\end{exm}

Let $\rho$ be a U-isotropic Heisenberg of $G_F$. Then, the alternating
character $X=X_\rho$  corresponds to a character $\eta:U_F\to\bbC^\times$
of the group of units. Then, we have the unique decomposition of $\eta$ as follows:
$$\eta=\eta_p\cdot\eta',$$ 
where $\eta'$ is of order  prime to $p$ and the order of $\eta_p$
is a power of $p$.  Correspondingly:
$$X=X_p\cdot X', \quad\text{where $\eta_p\leftrightarrow X_p,\, \eta'\leftrightarrow X'$}.$$
For the representation $\rho$ this means:
\begin{equation}\label{eqn x}
 \rho=\rho_p\otimes\rho'
\end{equation}
where  $\dim(\rho_p)=p^r (r\ge 0),\quad and \quad \dim(\rho')=:m'$, and $gcd(m', p)=1$, $m'|(q_F-1)$.



Moreover, expression (\ref{eqn x}) is {\bf not} unique because 
$$\rho_p\otimes\rho'=\rho_p\omega^{-1}\otimes\omega\rho'$$
for any character $\omega$ of $F^\times$.  Thus for appropriate $\omega$ we may assume
that $a_F(\rho')=m'$ is {\bf minimal} and $a_F(\rho_p)=m_p a$,  where $a\ge a_F(\eta_p)$.
Then, we can use Theorem \ref{General twisting formula by minimal Heisenberg representation} to give 
an invariant formula for $W(\rho,\psi)=W(\rho_p\otimes\rho')$ when $j(\rho')>2\cdot j(\rho_p)$.


\begin{thm}[Invariant Formula]\label{Invariant formula}
 Let $\rho$ be a U-isotropic Heisenberg representation of $G_F$ of the form $\rho=\rho_p\otimes\rho_{m}$ with 
 $\dim(\rho_p)=p^r (r\ge 1)$, and $\dim(\rho_{m})=m$, and $gcd(m,p)=1$. If jump: $j(\rho_m)>2\cdot j(\rho_p)$, we have 
 $$W(\rho,\psi)=W(\rho_p\otimes\rho_m,\psi)=\det(\rho_p)(\gamma)\cdot W(\chi_F,\psi)^{\dim(\rho)}\det(\rho_{0})(c^{p^r}).$$
 Here $\chi_F$, $c$, $\rho_0$ are same as in Theorem \ref{Theorem using Deligne-Henniart},
 and $\nu_F(\gamma)=a(\rho_{m})+m\cdot n(\psi)$.
\end{thm}

\begin{proof}
 The idea behind the proof is the same as Theorem \ref{General twisting formula by minimal Heisenberg representation}.
 Here just replace $\sigma$ with $\rho_p$. 
 And another important thing is that the representation $\rho_p$ is Heisenberg, and hence 
 irreducible. Therefore, $\beta(\rho_p)=j(\rho_p)$.
\end{proof}

\begin{rem}
Because maximal isotropic subgroups for $\rho$ are not unique, giving an invariant formula for $\det(\rho)$, we cannot
simply use Gallagher's result (cf. Theorem 30.1.6 of \cite{GK}):
\begin{equation}
\det(\rho)(g)=\det(Ind_{H}^{G}(\chi_H))(g)=\Delta_{H}^{G}(g)\cdot \chi_H(T_{G/H}(g)),\quad \text{for all $g\in G$},
\end{equation}
where $\Delta_{H}^{G}$ is the determinant of $Ind_{H}^{G}(1_H)$, and $T_{G/H}$ is the transfer map from $G$ to $H$.

For the invariant formula of $\det(\rho)$, we can see Theorem 5.1 and Theorem 5.1.A of \cite{SABIJM}.

\end{rem}

\subsection{{\bf Converse theorem on the Galois side}}

We know the 
answer to the following question:\\
{\it How to construct a modular form from a given Dirichlet series with {\bf desirabale } properties (e.g., analytic continuation,
moderate growth, functional equation), i.e., starting with the series 
$$L(s)=\sum_{n=1}^{\infty}\frac{a_n}{n^s},$$
under what conditions is the function 
$$f(z)=\sum_{n=1}^{\infty}a_n e^{2\pi i nz}$$
a modular form for some Fuchsian group?}\\
The answer to this question is known as the {\it classical converse theorem} in number
theory (cf. \cite{HH}, \cite{EH}, \cite{AW}).
The classical converse theorems establish 
a one-to-one correspondence between ``nice'' Dirichlet series and automorphic functions. 
Traditionally, converse theorems 
have provided a way to characterize Dirichlet series associated with modular forms in terms of their analytic properties.

The modern version of the classical converse theorems is stated in terms of automorphic representations instead of modular
forms. Again, the Langlands local correspondence that automorphic representations are associated with 
Galois representations. Therefore, one can ask the following questions:\\
{\it (a). Are there any converse theorems for automorphic representations (automorphic side of the converse theorem)?\\
(b). Similarly, is there any converse theorem for Galois representations (Galois side of the converse theorem)?}

The answer to (a) is {\bf YES}. For local converse theorems on the automorphic side, refer to \cite{DJ}.
Let $G$ be a reductive group over a p-adic local field $F/\bbQ_p$. Let 
$^LG$ be the Langlands dual group of $G$, which is a {\bf semi-product} of the complex dual group $G^{\vee}$, and the absolute 
Galois group $G_F:=\rm{Gal}(\overline{F}/F)$.
Let $\phi: W_F\times SL_2(\bbC)\to {^LG}$
be a continuous homomorphism, and which is {\bf admissible}. The $G^\vee$-conjugacy class of 
such a homomorphism $\phi$ is called a {\bf local Langlands parameter}.
Let $\Phi(G/F)$ be the set of local Langlands parameters, and let $\Pi(G/F)$ be the set of equivalence 
classes of irreducible admissible representations of $G(F)$.
 
The local Langlands conjecture (cf. \cite{MH1}, \cite{MH2}, \cite{HT01}, \cite{H00}, \cite{PS}) for $G$ over $F$ asserts that for each local Langlands parameter $\phi\in\Phi(G/F)$,
there should be a {\bf finite} subset $\Pi(\phi)$, which is called the {\bf local $L$-packet}
attached to $\phi$ such that the 
set $\{\Pi(\phi)|\phi\in \Phi(G/F)\}$ is a partition of $\Pi(G/F)$, among other required properties. 
{\it The map $\phi\mapsto\Pi(\phi)$ is called the local Langlands correspondence or local Langlands {\bf reciprocity law} for
$G$ over $F$.}


\begin{rem}[{\bf $\gamma$-factors}]
We define the {\bf local $\gamma$-factors} as follows (cf. \cite{JPSS83}): 
\begin{equation}
 \gamma(s,\pi_1\times\pi_2,\psi):=W(s,\pi_1\times\pi_2,\psi)\cdot 
 \frac{L(1-s,\pi_1^V\times\pi_2^V)}{L(s,\pi_1\times\pi_2)}.
\end{equation}
On the $W_F\times SL_2(\bbC)$ side, one defines the $\gamma$-factor in the same way (cf. \cite{JT2}). For more information
about local factors, refer to \cite{DJ}, \cite{FG99}, \cite{GPSR97}.


\end{rem}

{\bf (a) Converse theorem on the automorphic side:}\\
Roughly, the local converse theorem is to find the smallest subcollection of twisted local 
$\gamma$-factors $\gamma(s,\pi\times\tau,\psi)$ which classifies the irreducible admissible representations $\pi$ 
up to isomorphism. However, this is usually not the case in general.
From the local Langlands conjecture, one may expect a certain subcollection of local $\gamma$-factors classifies the irreducible
representation $\pi$ up to $L$-packet. On the other hand, if the irreducible admissible representations under consideration 
have additional structures, then one may still expect a certain subcollection of local $\gamma$-factors classifies the irreducible
representation $\pi$ up to equivalence.

For $GL_n(F)$, we have the following theorem.

\begin{thm}[{\bf Jacquet-Liu, 2016, \cite{JL}}]
 Let $\pi_1,\pi_2$ be irreducible generic representations of $GL_n(F)$. Suppose that they have the same central character.
 If 
 $$\gamma(s,\pi_1\times\tau,\psi)=\gamma(s,\pi_2\times\tau,\psi)$$
 as functions of the complex variable $s$, for all irreducible generic representations $\tau$ of $GL_r(F)$ with 
 $1\le r\le[\frac{n}{2}]$, then $\pi_1\cong \pi_2$.
\end{thm}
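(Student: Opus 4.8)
The plan is to follow the standard route to local converse theorems: reduce the statement to an equality of Bessel functions and then show that the hypothesis on $\gamma$-factors forces that equality. Recall that an irreducible generic representation $\pi$ of $GL_n(F)$ with a fixed central character is determined by its Bessel function $j_\pi(\,\cdot\,,\psi)$, or equivalently by the restriction to the mirabolic subgroup $P_n\subset GL_n(F)$ of a single Whittaker function $W\in\mathcal{W}(\pi,\psi)$ normalized by $W(e)=1$; this is Kirillov-model theory together with multiplicity one. So the whole problem becomes: the family of equalities $\gamma(s,\pi_1\times\tau,\psi)=\gamma(s,\pi_2\times\tau,\psi)$, over all generic $\tau$ of $GL_r(F)$ with $r\le[\frac{n}{2}]$, implies $j_{\pi_1}=j_{\pi_2}$.

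First I would bring in the local Rankin--Selberg functional equation of Jacquet--Piatetski-Shapiro--Shalika. For $W\in\mathcal{W}(\pi,\psi)$, $W'\in\mathcal{W}(\tau,\psi^{-1})$ and $r<n$ one has local zeta integrals $\Psi(s,W,W')$ satisfying
$$\widetilde\Psi(1-s,\widetilde W,\widetilde W')=\omega_\tau(-1)^{n-1}\,\gamma(s,\pi\times\tau,\psi)\,\Psi(s,W,W'),$$
where $\widetilde W,\widetilde W'$ are the Whittaker functions of the contragredients transformed by the relevant Weyl element. Feeding in the hypothesis gives, for every admissible $\tau$ and all test vectors, a bilinear identity between the zeta integrals attached to $\pi_1$ and those attached to $\pi_2$. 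Expanding the $\pi_i$-Whittaker functions along the mirabolic filtration, using $GL_r$-invariance of the integrand, and running a density argument over $\tau$, this converts into a system of integral equalities for the restricted Whittaker functions $W_i|_{P_n}$.

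The classical theorem of JPSS (as sharpened by Henniart and by Chen) extracts from such identities the conclusion $W_1|_{P_n}=W_2|_{P_n}$, hence $\pi_1\cong\pi_2$, but only when twists of all ranks $r\le n-1$ are permitted. The hard part, and the core of the argument, is to make do with $r\le[\frac{n}{2}]$. Here I would introduce the partial Bessel functions $j_{\pi,\ell}$ in the sense of Cogdell--Shahidi--Tsai, which interpolate between the zeta integrals and the full Bessel function and are organized by the Bruhat cells of $GL_n$. The strategy is a descending induction on the cells: stability of $\gamma$-factors under highly ramified character twists (Jacquet--Shalika) shows that the contribution of the ``large'' cells depends only on central character and conductor, hence already matches; and on the smaller cells the relevant data is carried by $GL_m$-constituents with $m<n$ arising in the mirabolic filtration, to which one applies the theorem inductively, using twists of rank $\le[\frac{m}{2}]\le[\frac{n}{2}]$, which are available. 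Matching the partial Bessel functions cell by cell yields $j_{\pi_1}=j_{\pi_2}$.

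I expect the decisive obstacle to be exactly this cell-by-cell descent: one must control the supports and the asymptotic behaviour of the partial Bessel functions precisely enough that twists of rank strictly between $[\frac{n}{2}]+1$ and $n-1$ are genuinely never needed. This is where the fine estimates on partial Bessel functions, together with the Weyl-group combinatorics of which cells contribute to which range of twists, do the real work; the functional equation, the reduction to Bessel functions, and the stability input are comparatively routine.
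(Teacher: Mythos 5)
The paper does not prove this statement at all: it is quoted as an external result of Jacquet--Liu (their 2016 preprint, cited as \cite{JL}) and used as motivation for the Galois-side converse theorem, so there is no internal proof to compare your proposal against. Judged on its own terms, your outline is a reasonable road map of \emph{one} of the known proofs --- the one via partial Bessel functions in the style of Cogdell--Shahidi--Tsai, which is essentially Chai's proof of the same theorem. Jacquet and Liu's own argument is different in its core mechanism: they work in the Kirillov/Whittaker models directly, construct a special pair of Whittaker functions adapted to the twisting range, and exploit the functional equation and unipotent integrations without organizing the argument around Bruhat cells and partial Bessel functions. So even as a reconstruction of the cited proof, your sketch follows the other route.

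More importantly, as a proof the proposal has a genuine gap, and you identify it yourself: everything you actually carry out (multiplicity one, the JPSS functional equation, reduction to Bessel functions, the classical converse theorem with twists up to rank $n-1$) was known long before this theorem and does not touch the $[\frac{n}{2}]$ bound, which is the entire content of the result. The two claims that do the work --- that the large cells ``already match'' by stability under highly ramified twists, and that the small cells reduce inductively to lower-rank cases --- are exactly the statements requiring the fine support and asymptotic control of partial Bessel functions, and they are asserted rather than proved. The inductive step is also not obviously well-founded as stated: the mirabolic filtration produces derivatives of the $\pi_i$ that need not be irreducible generic representations to which the theorem for $GL_m$ applies verbatim, and bridging that is one of the places where the published proofs expend serious effort. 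As written, this is a plan for a proof, not a proof.
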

\begin{rem}
 The above Jacquet-Liu's theorem was first conjectured by Jacquet (cf. Conjecture 1.1 of \cite{JNS}).
 On p. 170 of \cite{BH}, one can also see that the converse theorem 
 for $GL_2(F)$ is in terms of $L$- and $\epsilon$-factors. For further details on local converse theorems, 
 see \cite{GH}, \cite{CHEN1}, \cite{CPS2}.
\end{rem}


{\bf (b) Converse theorem on the Galois side:}\\
On the Galois side, we do not have any converse theorem similar to $GL_n$-side except
Volker Heiermann's \cite{VH}
work, because {\it we do not have a general twisting formula for the local root numbers for Galois representations.} 
Therefore,
to translate the local converse theorems from the automorphic side to the Galois side, we need a general twisting formula which
is not yet known.

However, using Theorem \ref{General twisting formula by minimal Heisenberg representation}, we have the following 
converse theorem on the Galois side for Heisenberg representations.

\begin{thm}[{\bf Converse Theorem on the Galois side}]\label{Local Converse theorem}
 Let $\rho_{m}=\rho_0\otimes\widetilde{\chi_F}$ be a U-isotropic Heisenberg representation of
 $G_F$ of dimension prime to $p$.
 Let $\rho_1$, $\rho_2$ be two finite-dimensional complex representations of $G_F$ with
 $$\det(\rho_1)\equiv\det(\rho_2),\quad\text{and $j(\rho_m)>2\cdot \textrm{max}\{\beta(\rho_1),\beta(\rho_2)\}$}.$$
If 
 $$W(\rho_1\otimes\rho_{m},\psi)=W(\rho_2\otimes\rho_{m},\psi),$$
then $\rho_1\equiv\rho_2$ or $\rho_1\equiv \rho_2\otimes\mu$, where $\mu:F^\times\to\bbC^\times$ is an unramified character 
whose order divides $\dim(\rho_i), i=1,2$.
\end{thm}

\begin{proof}
Under the given condition $j(\rho_m)>2\cdot\rm{max}\{\beta(\rho_1),\beta(\rho_2)\}$, then  
 using Theorem \ref{General twisting formula by minimal Heisenberg representation}, we can write
\begin{align}\label{eqn 6.6}
 W(\rho_1\otimes\rho_{m},\psi)=W(\rho_2\otimes\rho_{m},\psi)\implies\\\nonumber
\det(\rho_1)(\gamma)W(\rho_{m},\psi)^{\dim(\rho_1)}=
 \det(\rho_2)(\gamma)W(\rho_{m},\psi)^{\dim(\rho_2)}.
\end{align}
Again, using Theorem \ref{Theorem using Deligne-Henniart}, from Equation (\ref{eqn 6.6}), we have
\begin{equation}\label{eqn 6.7}
 \det(\rho_1)(\gamma)\cdot W(\chi_F,\psi)^{\dim(\rho_1\otimes\rho_m)}\cdot \det(\rho_0)(c^{\dim(\rho_1)})=
 \det(\rho_2)(\gamma)\cdot W(\chi_F,\psi)^{\dim(\rho_2\otimes\rho_m)}\cdot \det(\rho_0)(c^{\dim(\rho_2)}).
\end{equation}

Because $\det(\rho_1)\equiv\det(\rho_2)$ on $F^\times$, and $\chi_F$ is arbitrary character of $a(\chi_F)\ge 2$,
from the above equation (\ref{eqn 6.7}), we can conclude that $\dim(\rho_1)=\dim(\rho_2)$. 


This gives: \\
{\bf Case-1:} $\rho_1\cong \rho_2\otimes\mu$, where $\mu: G_F^\times\to \bbC^\times$ is a character of $G_F$,
hence $\mu$ can be considered a character of $F^\times$
(via class field theory).\\
{\bf Case-2:} $\rho_1\equiv\rho_2$, and this is one of our assertions.

When we are in Case-1,  by the given assumption 
$\det(\rho_1)\equiv\det(\rho_2)=\det(\rho_1\otimes\mu)=\det(\rho_1)\cdot \mu^{\dim(\rho_1)}$, then the order of $\mu$ 
must be a divisor of $\dim(\rho_1)=\dim(\rho_2).$

Now we are left to prove that $\mu$ is unramified, and which follows from the given condition
$W(\rho_1\otimes\rho_m,\psi)=W(\rho_2\otimes\rho_m,\psi)$.

This completes the proof.

 

\end{proof}



\section{{\bf Appendix}}

In the following lemma, we see an explicit description of the representation $\rho=\rho(X_\eta,\chi)$.

\begin{lem}[{\bf Explicit Lemma}]\label{Explicit Lemma}
 Let $\rho=\rho(X_\eta,\chi_K)$ be a U-isotropic Heisenberg representation of the absolute Galois group $G_F$ of a local field 
 $F/\bbQ_p$. Let $K=K_\eta$, and let $E/F$ be the maximal unramified subextension in $K/F$. Then
 \begin{enumerate}
  \item The norm map induces an isomorphism:
  $$N_{K/E}:K_F^\times/I_FK^\times\stackrel{\sim}{\to}I_FE^\times/I_F\cN_{K/E}.$$
  \item Let $c_{K/F}:F^\times/\rm{Rad}(X_\eta)\wedge F^\times/\rm{Rad}(X_\eta)\cong K_F^\times/I_FK^\times$ be the isomorphism
  which is induced by the commutator in the relative Weil-group $W_{K/F}$. Then, for units $\varepsilon\in U_F$, we 
  explicitly have:
  $$c_{K/F}(\varepsilon\wedge\pi_F)=N_{K/E}^{-1}(N_{E/F}^{-1}(\varepsilon)^{1-\varphi_{E/F}}),$$
  where $\varphi_{E/F}$ is the Frobenius automorphism for $E/F$ and where $N^{-1}$ means to take a preimage of the norm map.
  \item The restriction $\chi_K|_{K_F^\times}$ is characterized by:
  $$\chi_K\circ c_{K/F}(\varepsilon\wedge\pi_F)=X_\eta(\varepsilon,\pi_F)=\eta(\varepsilon),$$
  for all $\varepsilon\in U_F$, where $c_{K/F}(\varepsilon\wedge\pi_F)$ is explicitly given via (2).
 \end{enumerate}

\end{lem}

\begin{proof}
 {\bf (1).} Under the given conditions, we have: $K=K_\eta,$ and $K/F$ is the bicyclic extension 
 with $\rm{Rad}(X_\eta)=\cN_{K/F}$, and 
 $E/F$ is the maximal unramified subextension in $K/F$. Therefore, $K/E$ and $E/F$ both are cyclic, hence 
 $$E_F^\times=I_FE^\times,\qquad K_E^\times=I_EK^\times.$$
 From the diagram (3.6.1) on p. 41 of \cite{Z4}, we have 
 $$N_{K/E}: K_F^\times/I_FK^\times\stackrel{\sim}{\to} E_F^\times/I_F\cN_{K/E}.$$
 We also know that $E_F^\times=I_FE^\times$. Thus, the norm map $N_{K/E}$ induces an isomorphism:
 $$N_{K/E}:K_F^\times/I_FK^\times\cong I_FE^\times/I_F\cN_{K/E}.$$
 {\bf (2).} Under the given conditions, $c_{K/F}$ is the isomorphism induced by the commutator in 
 the relative Weil-group  $W_{K/F}$
 (cf. the map (\ref{eqn 5.1.3}). Here, $\rm{Rad}(X_\eta)=\cN_{K/F}=:N$.
 Then, from Proposition 1(iii) of \cite{Z5} on p. 128, we have 
 $$c_{K/F}: N\wedge F^\times/N\wedge N\stackrel{\sim}{\to} I_FK^\times/I_FK_F^\times$$
 as an isomorphism by the map:
 $$c_{K/F}(x\wedge y)=N_{K/F}^{-1}(x)^{1-\phi_F(y)},$$
 where $\phi_F(y)\in \rm{Gal}(K/F)$ for $y\in F^\times$ by class field theory.
 If $y=\pi_F$, then by class field theory (cf. \cite{JM}, p. 20, Theorem 1.1(a)), we can write 
 $\phi_F(\pi_F)|_{E}=\varphi_{E/F}$, where $\varphi_{E/F}$ is the Frobenius automorphism for $E/F$.
 
 Now, we come to our special case.
Because $E/F$ is unramified, we have $U_F\subset\cN_{E/F}$, and we obtain (cf. \cite{Z4}, pp. 46-47 of Section 4.4 and 
the diagram on p. 302 of \cite{Z2}):
\begin{equation}\label{eqn explicit lemma}
 N_{K/E}\circ c_{K/F}(\varepsilon\wedge\pi_F)=N_{E/F}^{-1}(\varepsilon)^{1-\varphi_{E/F}}.
\end{equation}
We also know (see the first two lines under the upper diagram on p. 302 of \cite{Z2}) that
$E_F^\times\subseteq \cN_{K/E}$. Here 
$$N_{E/F}^{-1}(\varepsilon)^{1-\varphi_{E/F}}\in I_FE^\times/I_F\cN_{K/E}=E_F^\times/I_F\cN_{K/E},$$
because $E/F$ is cyclic, hence $E_F^\times=I_FE^\times$. Therefore, from Equation (\ref{eqn explicit lemma}), we can conclude:
$$c_{K/F}(\varepsilon\wedge\pi_F)=N_{K/E}^{-1}(N_{E/F}^{-1}(\varepsilon)^{1-\varphi_{E/F}}).$$
{\bf (3.)} We know that $c_{K/F}(\varepsilon\wedge\pi_F)\in K_F^\times$, and $\chi_K:K^\times/I_FK^\times\to\bbC^\times$. 
Then, we can write 
\begin{align*}
 \chi_K\circ c_{K/F}(\varepsilon\wedge\pi_F)
 &=\chi_K(N_{K/E}^{-1}(N_{E/F}^{-1}(\varepsilon)^{1-\varphi_{E/F}})\\
 &=\chi_E\circ N_{K/E}(N_{K/E}^{-1}(N_{E/F}^{-1}(\varepsilon)^{1-\varphi_{E/F}}), \quad\text{since $\chi_K=\chi_E\circ N_{K/E}$}\\
 &=\chi_E(N_{E/F}^{-1}(\varepsilon)^{1-\varphi_{E/F}})=X_\eta(\varepsilon,\pi_F)\\
 &=\eta(\varepsilon).
\end{align*}
This is true for all $\varepsilon\in U_F$. Therefore, we can conclude that 
$\chi_K|_{K_F^\times}=\eta$.
\end{proof}

\begin{exm}[{\bf Explicit description of Heisenberg representations of dimension prime to $p$}]\label{Example for Heisenberg reps}

Let $F/\bbQ_p$ be a local field, and $G_F$ be the absolute Galois group of $F$.
Let $\rho=\rho(X,\chi_K)$ be a Heisenberg representation of $G_F$ of dimension $m$ prime to $p$. Then, from 
Lemma \ref{Lemma dimension equivalent},  the alternating character $X=X_\eta$ is U-isotropic for a character
$\eta:U_F/U_F^1\to\bbC^\times$. Here, from Lemma \ref{Lemma U-isotropic}, 
we can say $m=\sqrt{[F^\times:\rm{Rad}(X_\eta)]}=\#\eta$ divides $q_F-1$.

Because $U_F^1$ is a pro-p-group and $gcd(m,p)=1$, we have $(U_F^1)^m=U_F^1\subset {F^\times}^m$, and therefore  
$$F^\times/{F^\times}^m\cong\bbZ_m\times\bbZ_m,$$
is a bicyclic group of the order $m^2$. Therefore, by class field theory, there is precisely one extension $K/F$ such that 
$\rm{Gal}(K/F)\cong\bbZ_m\times\bbZ_m$, and the norm group $\cN_{K/F}:=N_{K/F}(K^\times)={F^\times}^m$.

We know that $U_F/U_F^1$ is a cyclic group of order $q_F-1$, hence $\widehat{U_F/U_F^1}\cong U_F/U_F^1$. By the given condition 
$m|(q_F-1)$, hence $U_F/U_F^1$ has exactly one subgroup of order $m$. Then, number of elements of order $m$ in $U_F/U_F^1$ is 
$\varphi(m)$, which is Euler's $\varphi$-function of $m$.
In this setting, we have $\eta\in \widehat{U_F/U_F^1}\cong \widehat{FF^\times/U_F^1\wedge U_F^1}$ with 
$\#\eta=m$. This implies that up to one-dimensional character twist there are $\varphi(m)$ representations 
corresponding to $X_\eta$ where $\eta:U_F/U_F^1\to\bbC^\times$ is of the order $m$.

\begin{rem}
 From this above explicit description of the Heisenberg representations of dimension prime to $p$, we can see that the 
 conductor of character $\eta$ is $a_F(\eta)=1$. From Equation (2.5) and Proposition 4.6, we can conclude
 that 
 \begin{equation}
  a_F(\eta)=j(\rho)+1.
 \end{equation}
Here, because $a_F(\eta)=1$, we have $j(\rho)=0$.
\end{rem}

According to Corollary 1.2 of \cite{Z2}, all dimension-m-Heisenberg 
representations of $G_F=\rm{Gal}(\overline{F}/F)$ is given as 
\begin{equation}
 \rho=\rho(X_\eta,\chi_K),\tag{1H}
\end{equation}
where $\chi_K: K^\times/ I_{F}K^\times\to\mathbb{C}^{\times}$ is a character 
such that the restriction of $\chi_K$
to the subgroup $K_{F}^{\times}$ corresponds to $X_\eta$ under the map (\ref{eqn 5.1.3}), and
\begin{equation}
 F^\times/{F^\times}^m\wedge F^\times/{F^\times}^m\cong K_{F}^{\times}/I_{F}K^\times,\tag{2H}
\end{equation}
which is given via the commutator in the relative Weil-group $W_{K/F}$ (for details arithmetic description of Heisenberg
representations of a Galois group, see \cite{Z2}, pp. 301-304).
Condition (2H) corresponds to (\ref{eqn 5.1.3}). Here, the above Explicit Lemma \ref{Explicit Lemma} comes in.

Due to our assumption, both sides of (2H) are groups of order $m$.
If one choice $\chi_K=\chi_0$ has been fixed, all other $\chi_K$
are given as
\begin{equation}\label{eqn 4.20}
 \chi_K=(\chi_F\circ N_{K/F})\cdot\chi_0,
\end{equation}
for arbitrary characters of $F^\times$. For an optimal choice $\chi_K=\chi_0$, and order of $\chi_0$,
we need the following lemma.

\begin{lem}\label{Lemma 5.3.3}
Let $K/F$ be the extension of $F/\bbQ_p$ for which $\rm{Gal}(K/F)=\bbZ_m\times\bbZ_m$. 
The $K_{F}^{\times}$ and $I_{F}K^\times$ are
as above. Then, 
 the sequence 
 \begin{equation}\label{eqn 4.21}
  1\to U_{K}^{1}K_{F}^{\times}/U_{K}^{1}I_{F}K^\times\to U_K/U_{K}^{1}I_{F}K^\times\xrightarrow{N_{K/F}} U_F/U_{F}^{1}\to
  U_F/U_F\cap {F^\times}^m\to 1
 \end{equation}
is exact, and the outer terms are both of order $m$, hence the inner terms are both cyclic of order $q_F-1$.
\end{lem}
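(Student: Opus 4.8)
The plan is to analyze the exact sequence directly, term by term, using local class field theory and the concrete structure of the bicyclic extension $K/F$. First I would establish exactness. The left map $U_K^1 K_F^\times/U_K^1 I_F K^\times \hookrightarrow U_K/U_K^1 I_F K^\times$ is injective by construction (it is the inclusion of a subgroup quotient). At the next spot, an element of $U_K$ lies in $U_K^1 I_F K^\times \cdot (\text{kernel of } N_{K/F})$ precisely when its norm lies in $U_F^1$; since $I_F K^\times$ consists of elements $x^{1-\sigma}$ which have trivial norm, and $U_K^1$ maps into $U_F^1$ (as $K/F$ is tamely ramified, so $N_{K/F}(U_K^1)=U_F^1$ by the standard norm computation on principal units), exactness at $U_K/U_K^1 I_F K^\times$ amounts to the statement that $\ker(N_{K/F}|_{U_K}) \subseteq U_K^1 K_F^\times I_F K^\times$ modulo $U_K^1 I_F K^\times$ — this is where I would invoke the Hasse norm theorem / the explicit description of $K_F^\times$ as the norm-one subgroup. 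At the right, $N_{K/F}: U_F/U_F^1 \to U_F/(U_F \cap {F^\times}^m)$ is the natural surjection because $\cN_{K/F} = {F^\times}^m$ (established in the Appendix from $\mathrm{Gal}(K/F)\cong\bbZ_m\times\bbZ_m$), so the image of $N_{K/F}$ on units is exactly $U_F \cap \cN_{K/F} = U_F \cap {F^\times}^m$, giving exactness at $U_F/U_F^1$ and surjectivity onto the last term.

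Next I would identify the two outer terms as cyclic of order $m$. The rightmost term $U_F/(U_F \cap {F^\times}^m)$: since $U_F^1 = (U_F^1)^m \subseteq {F^\times}^m$ (as $U_F^1$ is a pro-$p$-group and $\gcd(m,p)=1$), we have $U_F \cap {F^\times}^m \supseteq U_F^1$, and $U_F/U_F^1$ is cyclic of order $q_F-1$ with $m \mid q_F-1$, so $U_F/(U_F\cap{F^\times}^m) \cong (\bbZ/(q_F-1)\bbZ)/(m\text{-th powers}) \cong \bbZ_m$. For the leftmost term, $U_K^1 K_F^\times / U_K^1 I_F K^\times$: here I would use that $K_F^\times/I_F K^\times \cong F^\times/\cN_{K/F} \wedge F^\times/\cN_{K/F} \cong \bbZ_m$ (this is (\ref{eqn 5.1.3}) together with $F^\times/{F^\times}^m \cong \bbZ_m\times\bbZ_m$, whose alternating square is $\bbZ_m$), and then show that intersecting with $U_K^1$ does not change this: since $K/F$ is tame, $U_K^1$ is pro-$p$ and the relevant quotient has order prime to $p$, so $U_K^1 \cap K_F^\times I_F K^\times$ is absorbed and the map $K_F^\times/I_F K^\times \to U_K^1 K_F^\times/U_K^1 I_F K^\times$ is an isomorphism. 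Thus both outer terms have order $m$.

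Finally, the claim about the inner terms. By exactness the alternating product of orders of all four terms is $1$; since the two outer terms have equal order $m$, the two inner terms $U_K^1 K_F^\times/U_K^1 I_F K^\times$ replaced by $U_K/U_K^1 I_F K^\times$ and $U_F/U_F^1$ must have equal order. But $U_F/U_F^1$ is cyclic of order $q_F-1$ by the standard structure of the unit group, so $U_K/U_K^1 I_F K^\times$ also has order $q_F-1$; and it is a quotient of $U_K/U_K^1$, which is cyclic of order $q_K - 1 = q_F^{f_{K/F}} - 1$ — wait, here $f_{K/F} = m$ is not $1$, so I would instead argue that $U_K/U_K^1 I_F K^\times$ is cyclic because it receives a surjection from $U_K/U_K^1 \cong \bbZ_{q_K-1}$, hence is cyclic, and its order was just shown to be $q_F-1$. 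So both inner terms are cyclic of order $q_F-1$.

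The main obstacle, I expect, will be pinning down exactness at the spot $U_K/U_K^1 I_F K^\times$ — i.e., that $\ker(N_{K/F}|_{U_K/U_K^1 I_F K^\times})$ is exactly the image of $U_K^1 K_F^\times$. This requires combining the Hasse norm theorem (to control $K_F^\times$) with the tameness of $K/F$ (to control how $U_K^1$ interacts with norms and with $I_F K^\times$), and care is needed because $K/F$ is bicyclic rather than cyclic, so one genuinely uses the $\wedge$-structure rather than a naive Herbrand-quotient argument. The rest is bookkeeping with the pro-$p$ versus prime-to-$p$ dichotomy and the cyclicity of $U_F/U_F^1$ and $U_K/U_K^1$.
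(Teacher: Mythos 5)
Your proposal is correct and follows essentially the same route as the paper's proof, which is in fact terser than yours: the paper asserts exactness from $N_{K/F}(K^\times)={F^\times}^m$, computes the right outer term by the footnote calculation you reproduce, handles the left outer term by the same pro-$p$ versus prime-to-$p$ absorption (via the auxiliary sequence $1\to U_K^1\cap K_F^\times/U_K^1\cap I_FK^\times\to K_F^\times/I_FK^\times\to U_K^1K_F^\times/U_K^1I_FK^\times\to 1$), and gets the inner terms by the same order count together with cyclicity of $U_F/U_F^1$ and $U_K/U_K^1$. The one step you flag as the main obstacle, exactness at $U_K/U_K^1I_FK^\times$, closes at once from the two facts you already cite and needs no Hasse norm theorem: if $x\in U_K$ has $N_{K/F}(x)\in U_F^1$, then tameness gives $N_{K/F}(U_K^1)=U_F^1$, so there is $y\in U_K^1$ with $N_{K/F}(y)=N_{K/F}(x)$, whence $xy^{-1}\in K_F^\times$ and $x\in U_K^1K_F^\times$; the bicyclicity of $K/F$ is irrelevant at this spot and only matters for the left outer term, where $K_F^\times/I_FK^\times\cong\bbZ_m$ instead of being trivial as Hilbert 90 would force in the cyclic case.
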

\begin{proof}
 The sequence is exact because ${F^\times}^m=N_{K/F}(K^\times)$ is the group of norms, and 
 $F^\times/{F^\times}^m\cong \bbZ_m\times\bbZ_m$ implies
 that the right hand term\footnote{Since $gcd(m,p)=1$, we have 
 \begin{center}
  $U_F\cdot{F^\times}^m=(<\zeta>\times U_F^1)(<\pi_F^m>\times<\zeta^m>\times U_F^1)=<\pi_F^m>\times<\zeta>\times U_F^1$,
 \end{center}
where $\zeta$ is a $(q_F-1)$-st root of unity. 
Then, 
\begin{center}
 $U_F/U_F\cap {F^\times}^m=U_F\cdot {F^\times}^m/{F^\times}^m=
 <\pi_F^m>\times<\zeta>\times U_F^1/<\pi_F^m>\times<\zeta^m>\times U_F^1\cong\bbZ_m$.
\end{center}
Hence, $|U_F/U_F\cap{F^\times}^m|=m$.} is of the order $m$. By our assumption,
the order of $K_{F}^{\times}/I_{F}K^\times$ is $m$. Now, 
 we consider the exact sequence
 \begin{equation}\label{sequence 5.1.25}
  1\to U_{K}^{1}\cap K_{F}^{\times}/U_{K}^{1}\cap I_{F}K^\times\to K_{F}^{\times}/I_{F}K^\times\to 
  U_{K}^{1}K_{F}^{\times}/U_{K}^{1}I_{F}K^\times\to 1.
 \end{equation}
Since the middle term has order $m$, the left term must have order $1$, because $U_{K}^{1}$ is a pro-p-group and $gcd(m,p)=1$.
Hence, the right term is also of the order $m$. Therefore, the outer terms of the sequence (\ref{eqn 4.21}) 
have both orders $m$, hence, the inner 
terms must have the same order $q_F-1=[U_F:U_{F}^{1}]$, and they are cyclic, because the groups $U_F/U_{F}^{1}$ and $U_K/U_{K}^{1}$
are both cyclic.
\end{proof}

{\bf\large{We now are in a position to choose $\chi_K=\chi_0$ as follows}}: 
\begin{enumerate}
 \item we take $\chi_0$ as a character of $K^\times/U_{K}^{1}I_{F}K^\times$,
 \item we  take it on $U_{K}^{1}K_{F}^{\times}/U_{K}^{1}I_{F}K^\times$ as it is prescribed by the above 
 Explicit Lemma \ref{Explicit Lemma},
 in particular, $\chi_0$ restricted to that subgroup (which is cyclic of order $m$) will be faithful.
 \item we take it trivial on all primary components of the cyclic group $U_{K}/U_{K}^{1}I_{F}K^\times$ which are not $p_i$-primary,
 where $m=\prod_{i=1}^{n}p_i^{a_i}$.
 \item we take it trivial for a fixed prime element $\pi_K$.
\end{enumerate}

Under the above optimal choice of $\chi_0$, we have the following lemma.

\begin{lem}\label{Lemma 5.1.17}
Denote $\nu_p(n):=$ as the highest power of $p$ for which $p^{\nu_p(n)}|n$.
 The character $\chi_0$ must be a character of order 
 $$m_{q_F-1}:=\prod_{l|m}l^{\nu_l(q_F-1)},$$
 which we will call the $m$-primary part of $q_F-1$, so it determines a cyclic
extension $L/K$ of degree $m_{q_F-1}$ which is totally tamely ramified, and we can consider 
the Heisenberg representation $\rho=(X,\chi_0)$ of 
$G_F=\rm{Gal}(\overline{F}/F)$ is a representation of $\rm{Gal}(L/F)$, which is of order $m^2\cdot m_{q_F-1}$.
\end{lem}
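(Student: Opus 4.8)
The plan is to determine the order of the character $\chi_0$ by reading it off the finite cyclic quotient on which it is supported, then to translate that order, via local class field theory, into the degree and ramification type of the extension $L/K$ it defines, and finally to check that the associated Heisenberg representation descends to $\mathrm{Gal}(L/F)$.

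First I would fix the uniformizer $\pi_K$ and split $K^\times/U_K^1 I_F K^\times \cong \langle\pi_K\rangle\times\bigl(U_K/U_K^1 I_F K^\times\bigr)$. By Lemma~\ref{Lemma 5.3.3} the second factor is cyclic of order $q_F-1$; call it $C$. Since $\chi_0$ is chosen trivial on $\pi_K$ (item (4) of the construction), it is effectively a character of $C$. I would decompose $C=\prod_{l}C_l$ into primary components, so $|C_l|=l^{\nu_l(q_F-1)}$. The unique subgroup of $C$ of order $m=\prod_{l\mid m}l^{\nu_l(m)}$ is $\prod_{l\mid m}C_l'$ with $C_l'\leq C_l$ of order $l^{\nu_l(m)}$, and by the proof of Lemma~\ref{Lemma 5.3.3} this subgroup equals $U_K^1K_F^\times/U_K^1I_FK^\times$; on it $\chi_0$ is faithful (item (2), the Explicit Lemma~\ref{Explicit Lemma}), while $\chi_0$ is trivial on each $C_l$ with $l\nmid m$ (item (3)).

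The crux is the elementary fact that a character of a cyclic $l$-group of order $l^e$ whose restriction to the subgroup of order $l^a$, with $1\le a\le e$, is faithful must itself have order $l^e$. Applying this to $C_l$ with $e=\nu_l(q_F-1)$ and $a=\nu_l(m)$ for every $l\mid m$, and using triviality of $\chi_0$ on the remaining primary components, gives $\#\chi_0=\prod_{l\mid m}l^{\nu_l(q_F-1)}=m_{q_F-1}$. By class field theory $\chi_0$ then defines a cyclic extension $L/K$ with $\mathcal{N}_{L/K}=\mathrm{Ker}(\chi_0)$ and $[L:K]=m_{q_F-1}$; triviality of $\chi_0$ on $U_K^1$ forces its conductor to be $\le 1$, and $\gcd(m_{q_F-1},p)=1$ since $m_{q_F-1}\mid q_F-1$, so $L/K$ is tamely ramified; triviality of $\chi_0$ on $\pi_K$ gives $\pi_K\in\mathcal{N}_{L/K}$, whence $U_K\mathcal{N}_{L/K}=K^\times$ and $L/K$ is totally ramified.

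Finally, triviality of $\chi_0$ on $I_FK^\times=\langle x^{1-\sigma}\rangle$ says exactly that $\chi_0$ is $\mathrm{Gal}(K/F)$-invariant, so $\mathrm{Ker}(\chi_0)\subset K^\times$ is $\mathrm{Gal}(K/F)$-stable and $L/F$ is Galois; thus $G_L$ is normal in $G_F$, and since $\rho=\rho(X,\chi_0)$ is a constituent of $\mathrm{Ind}_{G_K}^{G_F}(\widetilde{\chi_0})$, which is trivial on $\bigcap_g gG_Lg^{-1}=G_L$, it factors through $\mathrm{Gal}(L/F)$, a group of order $[L:F]=[L:K]\,[K:F]=m_{q_F-1}\cdot m^2$. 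I expect the main obstacle to be making the order computation fully rigorous --- precisely the implication ``$\chi_0$ faithful on the order-$m$ subgroup $\Rightarrow$ $\chi_0$ attains full order $l^{\nu_l(q_F-1)}$ on each $l$-primary component for $l\mid m$'' --- after which the identification of $L/K$ and $L/F$ is routine class field theory bookkeeping.
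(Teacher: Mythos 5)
Your proposal is correct and follows essentially the same route as the paper: read off the order of $\chi_0$ from the cyclic quotient $U_K/U_K^1I_FK^\times$ of order $q_F-1$ using the four conditions of the optimal choice, then apply class field theory to get the cyclic, tame, totally ramified extension $L/K$, and use triviality on $I_FK^\times$ plus normality of $G_L$ to descend $\rho$ to $\mathrm{Gal}(L/F)$. In fact you are more careful than the paper at the one genuinely delicate step — the paper simply asserts that the order of $\chi_0$ equals $m_{q_F-1}$, whereas you justify it via the primary decomposition and the observation that a character of a cyclic $l$-group faithful on a nontrivial subgroup must itself be faithful.
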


\begin{proof}
Under the given conditions, $m|q_F-1$. Therefore, we can write
$$q_F-1=\prod_{l|m}l^{\nu_l(q_F-1)}\cdot \prod_{p|q_F-1,\; p\nmid m}p^{\nu_p(q_F-1)}=
m_{q_F-1}\cdot \prod_{p|q_F-1,\;p\nmid m}p^{\nu_p(q_F-1)},$$
where $l, p$ are prime, and $m_{q_F-1}=\prod_{l|m}l^{\nu_l(q_F-1)}$.

From the construction of $\chi_0$, $\pi_K\in\rm{Ker}(\chi_0)$, hence the order of $\chi_0$ comes from the restriction to 
$U_K$. Then, the order of $\chi_0$ is $m_{q_F-1}$, because from Lemma \ref{Lemma 5.3.3}, the order of $U_K/U_{K}^{1}I_FK$ is
$q_F-1$. Since order of $\chi_0$ is $m_{q_F-1}$, by class field theory $\chi_0$ determines a cyclic 
extension $L/K$ of degree $m_{q_F-1}$, hence 
$$N_{L/K}(L^\times)=\rm{Ker}(\chi_0)=\rm{Ker}(\rho).$$
This means $G_L$ is the kernel of $\rho(X,\chi_0)$, hence $\rho(X,\chi_0)$ is actually a representation of 
$G_F/G_L\cong\rm{Gal}(L/F)$.

Because $G_L$ is a normal subgroup of $G_F$,  $L/F$ is a normal extension of 
degree $[L:F]=[L:K]\cdot[K:F]=m_{q_F-1}\cdot m^2$.
Thus, $\rm{Gal}(L/F)$ is of the order $m^2\cdot m_{q_F-1}$.

Moreover, because $[L:K]=m_{q_F-1}$ and $gcd(m,p)=1$, $L/K$ is tame. By this construction, we have a prime 
$\pi_K\in\rm{Ker}(\chi_0)=N_{L/K}(L^\times)$, hence $L/K$ is a totally ramified extension. 

\end{proof}

\begin{lem}(Here $L$, $K$, and $F$ are the same as in Lemma \ref{Lemma 5.1.17})
 Let $F^{ab}/F$ be the maximal abelian extension. Then, we have 
$$L\supset L\cap F^{ab}\supset K\supset F, \quad\{1\}\subset G'\subset Z(G)\subset G=\rm{Gal}(L/F),$$
where $[L:L\cap F^{ab}]=|G'|=m$ and $[L:K]=|Z(G)|=m_{q_F-1}$.
\end{lem}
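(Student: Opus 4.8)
The plan is to work inside the finite Galois group $G=\mathrm{Gal}(L/F)$, through which $\rho=\rho(X_\eta,\chi_0)$ factors faithfully (by Lemma~\ref{Lemma 5.1.17}, $G_L=\mathrm{Ker}(\rho)$), and to read off the two intermediate fields as the fixed fields of $G':=[G,G]$ and of $Z(G)$ via the Galois correspondence. First I would note that $G$ is two-step nilpotent: since $\rho$ is a Heisenberg representation it represents commutators by scalar matrices and kills $[[G_F,G_F],G_F]$, so in $G$ one has $[[G,G],G]=\{1\}$, i.e.\ $G'\subseteq Z(G)$. This yields the chain $\{1\}\subseteq G'\subseteq Z(G)\subseteq G$ and hence, by Galois theory, a tower $L\supseteq\mathrm{Fix}_L(G')\supseteq\mathrm{Fix}_L(Z(G))\supseteq F$ with $[L:\mathrm{Fix}_L(G')]=|G'|$ and $[L:\mathrm{Fix}_L(Z(G))]=|Z(G)|$; moreover $\mathrm{Fix}_L(G')$ is the maximal abelian subextension of $L/F$, which is exactly $L\cap F^{ab}$.

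Next I would identify $\mathrm{Fix}_L(Z(G))$ with $K$ and compute $|Z(G)|$. Since $\rho$ is irreducible and faithful on $G$, Schur's lemma shows that the scalar group $Z_\rho$ of $\rho$ coincides with $Z(G)$: if $g\in Z(G)$ then $\rho(g)$ commutes with the irreducible $\rho(G)$, hence is scalar; conversely if $\rho(g)$ is scalar then $[g,h]\in\mathrm{Ker}(\rho)=\{1\}$ for every $h$. By Zink's description (Theorem~\ref{Theorem 5.1.1}) the scalar group corresponds to $K$, i.e.\ $Z_\rho=G_K$ inside $G_F$, so $Z(G)=G_K/G_L=\mathrm{Gal}(L/K)$; thus $\mathrm{Fix}_L(Z(G))=K$ and $|Z(G)|=[L:K]=m_{q_F-1}$ by Lemma~\ref{Lemma 5.1.17}. (That the chain of fields is strict, i.e.\ $m\mid m_{q_F-1}$, is immediate from $m\mid q_F-1$.)

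It remains to prove $|G'|=m$. Here I would use $G/Z(G)=G/Z_\rho\cong\mathrm{Gal}(K/F)\cong F^\times/\mathrm{Rad}(X_\eta)\cong\bbZ_m\times\bbZ_m$ (Lemma~\ref{Lemma U-isotropic}). Lift generators $\bar x,\bar y$ of the two cyclic factors to $x,y\in G$. Because $G'$ is central, $(\bar a,\bar b)\mapsto[a,b]$ is a well-defined alternating bilinear pairing $G/Z(G)\times G/Z(G)\to G'$, and since $G$ is generated by $x,y$ together with $Z(G)$, bilinearity forces $G'=\langle z\rangle$ with $z:=[x,y]$. For its order: the central character takes the value $\chi_\rho(z)=X_\eta(\bar x,\bar y)$ on $z$, which has exact order $m$ since $X_\eta$ is a non-degenerate alternating pairing on $\bbZ_m\times\bbZ_m$; hence $\mathrm{ord}(z)\ge m$. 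On the other hand $z^m=[x^m,y]=1$ because $x^m\in Z(G)$, so $\mathrm{ord}(z)=m$ and $|G'|=m=[L:L\cap F^{ab}]$. Assembling the three steps gives the asserted tower $L\supseteq L\cap F^{ab}\supseteq K\supseteq F$ with the stated orders.

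The step I expect to be the real (if modest) obstacle is the last one --- pinning down $|G'|=m$ exactly, rather than merely that $G'$ is cyclic of order dividing $m$ --- since it is precisely here that the non-degeneracy of $X_\eta$ on $G/Z_\rho$, i.e.\ the full force of the $U$-isotropic Heisenberg structure (Lemma~\ref{Lemma U-isotropic} and Theorem~\ref{Theorem 5.1.1}(3)), has to be invoked; everything else is bookkeeping with the Galois correspondence and Schur's lemma.
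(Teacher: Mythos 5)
Your proof is correct, and its skeleton matches the paper's: identify $G'=[G,G]$ with $\mathrm{Gal}(L/L\cap F^{ab})$ via the Galois correspondence, use two-step nilpotency to get $G'\subseteq Z(G)$, and identify $Z(G)$ with the scalar group $Z_\rho=\mathrm{Gal}(L/K)$ so that $|Z(G)|=[L:K]=m_{q_F-1}$ follows from Lemma \ref{Lemma 5.1.17} (the paper phrases this last step via $\dim(\rho)=\sqrt{[G:Z]}=m$ together with $|G|=m^2m_{q_F-1}$, which is the same count). Where you genuinely diverge is the computation $|G'|=m$. The paper gets this arithmetically: it identifies $G'\cong K_F^\times/I_FK^\times$ through the class-field-theoretic isomorphism (\ref{eqn 5.1.3}) and then reads off the order $m$ from the exact sequence (\ref{sequence 5.1.25}) in Lemma \ref{Lemma 5.3.3} (equivalently, from $F^\times/N\wedge F^\times/N\cong\wedge^2(\bbZ_m\times\bbZ_m)\cong\bbZ_m$). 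You instead stay inside the finite group: the commutator pairing on $G/Z(G)\cong\bbZ_m\times\bbZ_m$ makes $G'=\langle[x,y]\rangle$ cyclic, the identity $\chi_\rho([a,b])=X_\eta(\bar a,\bar b)$ plus non-degeneracy of $X_\eta$ gives $\mathrm{ord}([x,y])\ge m$, and $[x^m,y]=[x,y]^m=1$ gives the reverse bound. Your version is more self-contained and makes visible exactly where non-degeneracy of $X_\eta$ enters; the paper's version is shorter given that (\ref{eqn 5.1.3}) and Lemma \ref{Lemma 5.3.3} are already on the table. Both are valid; your correct observation that the pinch point is ruling out $|G'|<m$ is precisely what the faithfulness of $\chi_\rho|_{Z_\rho}$ on the commutator subgroup (Theorem \ref{Theorem 5.1.1}(3), Remark \ref{Remark 5.1.14}) encodes.
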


\begin{proof}
 Let $F^{ab}/F$ be the maximal abelian extension. Then, we have 
$$L\supset L\cap F^{ab}\supset K\supset F.$$
Here $L\cap F^{ab}/F$ is the maximal abelian in $L/F$. From the Galois theory, we can conclude 
$$\rm{Gal}(L/L\cap F^{ab})=[\rm{Gal}(L/F), \rm{Gal}(L/F)]=: G'.$$
Because $\rm{Gal}(L/F)=G_F/\rm{Ker}(\rho)$, and $[[G_F,G_F],G_F]\subseteq\rm{Ker}(\rho)$, 
from relation (\ref{eqn 5.1.3}) we have 
$$G'=[G_F,G_F]/\rm{Ker}(\rho)\cap [G_F,G_F]=[G_F,G_F]/[[G_F,G_F],G_F]\cong K_F^\times/I_FK^\times.$$
Again, from sequence \ref{sequence 5.1.25}, we have $|U_K^1K_F^\times/U_K^1 I_FK^\times|=|K_F^\times/I_FK^\times|=m$.
Hence $|G'|=m$.


From the Heisenberg property of $\rho$, we have 
$[[G_F,G_F],G_F]\subseteq\rm{Ker}(\rho)$, hence $\rm{Gal}(L/F)=G_F/\rm{Ker}(\rho)$ is a two-step nilpotent group.
This gives $[G',G]=1$, hence $G'\subseteq Z:=Z(G)$. Thus, $G/Z$ is abelian. 

Moreover, here $Z$ is the scalar group of $\rho$, hence the dimension of $\rho$ is:
$$\rm{dim}(\rho)=\sqrt{[G:Z]}=m$$
Therefore, the order of $Z$ is $m_{q_F-1}$ and $Z=\rm{Gal}(L/K)$.

\end{proof}

\begin{rem}[{\bf Special case: $m=2$, hence $p\ne 2$}]

Now, if we take $m=2$, hence $p\ne 2$, and choose $\chi_0$ as the above optimal choice, then we will have 
$m_{q_F-1}=2_{q_F-1}=2$-primary factor of the number $q_F-1$, and $\rm{Gal}(L/F)$ is a $2$-group of order 
$4\cdot 2_{q_F-1}$.

 When $q_F\equiv -1\pmod{4}$, $q_F$ is of the form $q_F=4l-1$, where $l\ge 1$. Therefore, we can write $q_F-1=2(2l-1)$.
Because $2l-1$ is always odd, when $q_F\equiv-1\pmod{4}$, the order of $\chi_0$ is $2_{q_F-1}=2$. 
Then, $\rm{Gal}(L/F)$ will be of order 8 if and only if $q_F\equiv -1\pmod{4}$, i.e., if and only
if $i\not\in F$. And if $q_F\equiv 1\pmod{4}$, then similarly,  we can write $q_F-1=4m$ for some integer $m\ge1$, hence 
$2_{q_F-1}\ge 4$. Therefore, when $q_F\equiv 1\pmod{4}$, the order of $\rm{Gal(L/F)}$ will be at least $16$.

\end{rem}

\end{exm}


{\bf Acknowledgment:} I express my sincere gratitude to E.-W. Zink for his valuable comments on the paper.

\end{document}